\newtheorem{theorem}{Theorem}[section]
\newtheorem{proposition}[theorem]{Proposition}
\newtheorem{corollary}[theorem]{Corollary}
\newtheorem{lemma}[theorem]{Lemma}
\newtheorem{algorithm}[theorem]{Algorithm}
\theoremstyle{definition}
\newtheorem{note}[theorem]{Note}
\newtheorem{example}[theorem]{Example}
\def\a{{\mathrm a}}
\def\e{{\mathrm e}}
\def\g{{\mathrm g}}
\def\m{{\mathrm m}}
\def\n{{\mathrm n}}
\def\r{{\mathrm r}}
\def\gcd{{\mathrm{gcd}}}
\def\A{{\mathrm{A}}}
\def\F{{\mathrm F}}
\def\G{{\mathrm G}}
\def\Max{\mathrm{Max}}
\def\SG{\mathrm{SG}}
\def\PF{{\mathrm{PF}}}
\def\msg{{\mathrm{ msg }}}
\def\Ap{{\mathrm{ Ap}}}
\def\max{{\mathrm{ max}}}
\def\MED{{\mathrm{MED}}}
\def\Cad{{\mathrm {Cad}}}
\def\Rat{{\mathrm{Rat}}}
\def\MR{{\mathrm{MR}}}
\def\sA{\mathscr{A}}
\def\sR{\mathscr{R}}
\def\msg{{\mathrm{ msg }}}
\def\Maximals{\mathrm{Maximals}_{\leq_S}}
\def\max{\mathrm{max}}
\def\min{\mathrm{min}}
\def\max{\mathrm{max}}
\def\N{\mathbb{N}}
\def\Z{\mathbb{Z}}
\def\Q{\mathbb{Q}}
\def\rank{\mathrm{rank}\, }
\def\Ap{\mathrm{Ap}}
\def\int{\mathrm{int}}
\title{The ratio-covariety of numerical semigroups with fixed multiplicity and  Frobenius number}
\author{
	M. A. Moreno-Fr\'{\i}as \footnote{
		Dpto. de Matem\'aticas, Facultad de Ciencias,
		Universidad de C\'adiz, E-11510, Puerto Real  (C\'{a}diz, Spain).
		Partially supported by  Junta de Andaluc\'{\i}a group FQM-298, 
		Proyecto de Excelencia de la Junta de Andalucía ProyExcel\_00868, Proyecto de investigación del Plan Propio--UCA 2022-2023 (PR2022-011) and Proyecto de investigación del Plan Propio--UCA 2022-2023 (PR2022-004).
		E-mail: mariangeles.moreno@uca.es.}
	\and
	J. C. Rosales \footnote{
		Dpto. de \'Algebra, Facultad de Ciencias, Universidad de Granada,
		E-18071, Granada. (Spain).
		Partially supported by  Junta de Andaluc\'{\i}a group FQM-343,
		Proyecto de Excelencia de la Junta de Andalucía ProyExcel\_00868 and Proyecto de investigación del Plan Propio--UCA 2022-2023 (PR2022-011).
		E-mail: jrosales@ugr.es.}
}
\date{}
\begin{document}
 
\maketitle

\begin{abstract}
	
		In this work we will introduce the concept of ratio-covariety, as a nonempty family $\sR$ of numerical semigroups verifying  certain properties.   This  concept will allow us to:
		\begin{enumerate}
			\item Describe an algorithmic process to compute $\sR.$
			\item Prove the existence of the smallest element of $\sR$ that contains a set of positive integers.
			\item Talk about the smallest ratio-covariety that contains a finite set of numerical semigroups.
		\end{enumerate}
	In addition, in this paper we will apply the previous results to the study of the ratio-covariety $\sR(F,m)=\{S\mid S \mbox{ is a numerical semigroup with Fro-}\\
		 \mbox {benius number }F \mbox{ and multiplicity }m\}.$

\smallskip
    {\small \emph{Keywords:} Numerical semigroup, ratio-covariety,  Frobenius number, genus, ratio,   algorithm.}

   \smallskip
    {\small \emph{MSC-class:} 20M14 (Primary),  11D07, 13H10 (Secondary).}
\end{abstract}

\section{Introduction}
Let $\N$  be the set of nonnegative integers. A {\it numerical semigroup} $S$ is a submonoid of $(\N,+)$ such that $\N \backslash S$ is finite. The set $\N \backslash S$ is known as the set of {\it gaps} of $S$ and its cardinality is called the {\it genus} of $S$, denote by $\g(S).$ The largest integer not belonging to $S$ is the {\it Frobenius number} of $S$ and it will be denote by $\F(S).$ For instance, $\F(\N)=-1.$  
Let $\{n_1<\dots <\n_p\}\subseteq \N$ such that $\gcd(n_1,\dots, n_p)=1.$ Then 
$
\langle n_1,\dots, n_p \rangle =\left \{\sum_{i=1}^p \lambda_in_i \mid \{\lambda_1,\dots, \lambda_p\}\subseteq \N \right\}
$
is a numerical semigroup and every numerical semigroup has this form (see \cite[Lemma 2.1]{libro}). The set $\{n_1<\dots <n_p\}$ is called {\it system of generators} of $S$, and we write $S=\langle  n_1,\dots, n_p \rangle.$ 
We say that a system of generators of a numerical semigroup is a {\it minimal system of generators} if none of its proper subsets generates the numerical semigroup. Every numerical semigroup has a unique minimal system of generators, which in addition is finite (see  \cite[Corollary 2.8]{libro}). The minimal system of generators of a numerical semigroup $S$ is denoted by $\msg(S).$ Its cardinality is called the {\it embedding dimension} and will be denoted by $\e(S).$ Another invariant which we will use in this work is the {\it multiplicity} of $S$, denoted by $\m(S).$ It is defined as the minimum of $S\backslash \{0\}.$ 

The called Frobenius problem (see \cite{alfonsin}) for numerical semigroups is a classical mathematical problem. It lies in finding formulas to obtain the Frobenius number and the genus of a numerical semigroup from its minimal  system of generators. This problem was solved in \cite{sylvester} for numerical semigroups with embedding dimension two.
Since then, many researchers have tried to solve this problem for numerical semigroups with  embedding dimension greater than or equal to three. However,  the problem is still open. Furthemore, in this case the problem becomes NP-hard (see \cite{alfonsin2}).

Looking to find a solution to the Frobenius problem, we have  found common properties that verify certain families of numerical semigroups (see for instance \cite{covariedades} and \cite{coarf}).  

The motivation for  this work has been  to try to extend  the study of covarieties, which was initiated in \cite{covariedades}, to families of numerical semigroup with a fixed Frobenius number and multiplicity. That is, if $m$ and $F$ are positive integers such that $m<F$ and $m$ does not divide $F,$ then we denote  by  $$\sR(F,m)=\{S\mid S \mbox{ is a numerical semigroup, } \F(S)=F \mbox{ and }\m(S)=m \}.$$ The main aim of this paper is to study the set  $\sR(F,m).$ We can see the set $\sR(F,m)$ as a family of numerical semigroups which verifies some important properties. The generalization of these properties, leads us to the notion of {\it ratio-covariety}. 

For integers $a$ and $b,$ we say that $a$ {\it divides} $b$ if there exists an integer $c$ such that $b=ca,$ and we denote this by $a\mid b.$ Otherwise, $a$ {\it does not divide} $b$, and we denote this by $a\nmid b.$
Let $S$ be a numerical semigroup such that $S\neq \N,$ the {\it ratio }of $S$ is defined  $\r(S):=\min(\msg(S)\backslash \{\m(S)\}).$ Note that $\r(S)=\min\{s\in S\mid \m(S)\nmid s \}.$

A {\it ratio-covariety } is  a nonempty family $\sR$ of numerical semigroups   fulfilling  the following conditions:

\begin{enumerate}
	\item[1)]  There is the minimum of $\sR$, denoted by  $\min (\sR).$ 
	\item[2)] If $\{S, T\} \subseteq \sR$, then $S \cap  T \in \sR$.
	\item[3)]  If $S \in \sR$  and $S \neq  \min(\sR)$, then $S \backslash \{\r(S)\} \in \sR$.
\end{enumerate}

The organisation of the paper is the following.  Section 2 is devoted to prove that every ratio-covariety es finite and its elements can be ordered in a rooted tree. Moreover, we will describe the child  of an arbitrary vertex of the tree. As a consequence, in Section 3, we will present an algorithm which computes all the elements of $\sR(F,m).$

In Section 4, we focus on the characterization of the maximal elements of $\sR(F,m).$   This fact will allow to compute the set $\{\g(S)\mid S\in \sR(F,m)\}.$ We will apply all these result to present an algorithm which allow us to calculate all the elements of $\sR(F,m)$ with a fixed genus. 

Let $\sR$ be a ratio-covariety. A set $X$ will be call $\sR$-{\it set} if $X\cap \min(\sR)=\emptyset$ and there is  $S\in \sR$ such that $X\subseteq S.$ In Section 5, we will show  the  smallest element of $\sR$ containing $X.$ This element will be denote by $\sR[X]$ and we will say that it is the element of $\sR$ generated by $X.$

If $S=\sR[X],$ then we will say that $X$ is -{\it system of generators} of $S$. Moreover, if  $S\neq\ \sR[Y]$ for all $Y\subsetneq X,$ then $X$ will be called a {\it minimal} $\sR$-{\it system of generators} of $S.$ In Section 5, we will show an example of ratio-covariety   $\sR$ in which the minimal  $\sR$-system of generators is not unique. Also, we see in this section, that every element of $\sR(F,m)$ admits a unique minimal $\sR(F,m)$-system of generators.

Let  $\sR$ be a ratio-covariety and $S\in \sR,$ then the $\sR$-$\rank$ of $S$ is  $\sR\rank(S)=\min\{\sharp X\mid X \mbox{ is }\sR\mbox{-set and } \sR[X]=S\}$ (where $\sharp Y$, denote the cardinality of a set $Y$).  In Section 6, we will see that the maximum of $\{\sR(F,m)\rank(S)\mid S\in \sR(F,m)\}$ is  less than or equal to $m-2.$ This fact allows us to give the following definition. A numerical semigroup $S$ has {\it maximum rank} (hereinafter, $\MR$-{\it semigroup}) if $\F(S)>2\m(S)$ and $\sR(\F(S),\m(S))\rank(S)=\m(S)-2.$ In Section 6, we show a characterization of these kind of semigroups.

In Section 6, we will also see that if $S_1,\cdots, S_k$ are numerical semigroups with multiplicity $m$ and $F=\max\{\F(S_1),\cdots, \F(S_k)\},$ then there is the smallest  ratio-covariety containing the set $\{S_1,\cdots, S_k\}$ and  
being $\Delta(F,m)=\langle m \rangle \cup \{F+1,\rightarrow\}$ its minimum (the symbol $\rightarrow$ means that every integer greater than $F+1$ belongs to the set). This ratio-covariety is denoted by $\langle S_1,\cdots, S_k \rangle$ and we present an algorithm which computes all its elements.

Throughout this paper, some examples are shown to illustrate the results proven. For the development of these examples can be used the GAP(see \cite{GAP}) package \texttt{numericalsgps} (\cite{numericalsgps}).

\section{The tree associated with a ratio-covariety}

If $T$ is a numerical semigroup, then $\N\backslash T$ is a finite set and so we have the following result.
\begin{lemma}\label{lemma1}
Let $T$ be a numerical semigroup. Then  $$\{S\mid S \mbox{ is a numerical semigroup and }T\subseteq S\}$$ 
is a finite set. 
\end{lemma}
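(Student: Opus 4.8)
The plan is to show that the set of numerical semigroups containing a fixed numerical semigroup $T$ is finite by bounding each such semigroup both below and above. Since $T$ is a numerical semigroup, the complement $\N\backslash T$ is finite; write $\F(T)$ for its Frobenius number, so every integer strictly greater than $\F(T)$ already lies in $T.$ The key observation is that if $S$ is any numerical semigroup with $T\subseteq S,$ then $S$ is completely determined by the set $S\cap\{0,1,\dots,\F(T)\},$ because outside this finite window the membership in $S$ is forced: every integer exceeding $\F(T)$ belongs to $T$ and hence to $S.$

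\medskip

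First I would record the chain of containments $T\subseteq S\subseteq \N.$ From $T\subseteq S$ we get $\N\backslash S\subseteq \N\backslash T,$ so in particular $\N\backslash S$ is a subset of the fixed finite set $\N\backslash T.$ This already does the job: the map sending $S$ to its gap set $\N\backslash S$ is injective (a numerical semigroup is recovered from its gaps), and its image is contained in the power set $\mathcal{P}(\N\backslash T).$ Since $\N\backslash T$ is finite, $\mathcal{P}(\N\backslash T)$ is finite, and therefore the collection of all such $S$ injects into a finite set and is itself finite.

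\medskip

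I would phrase the argument through this injection because it is the cleanest route: the assignment $S\mapsto \N\backslash S$ is injective on numerical semigroups, and $T\subseteq S$ forces $\N\backslash S\subseteq \N\backslash T.$ Thus
$$\{S\mid S \mbox{ is a numerical semigroup and }T\subseteq S\}\hookrightarrow \{X\mid X\subseteq \N\backslash T\},$$
and the right-hand side is finite because $\N\backslash T$ is finite. This immediately yields the finiteness claimed in the statement.

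\medskip

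I do not anticipate a genuine obstacle here; the lemma is essentially an unwinding of the defining property that a numerical semigroup has finite complement. The only point requiring a word of care is the injectivity of $S\mapsto \N\backslash S,$ which is immediate since $S=\N\backslash(\N\backslash S),$ and the observation that not every subset of $\N\backslash T$ arises as a gap set of a semigroup containing $T$ is irrelevant for an upper bound on cardinality. Hence the finiteness follows directly from the finiteness of $\N\backslash T.$
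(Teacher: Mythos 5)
Your proof is correct and follows essentially the same route the paper intends: the paper offers no formal proof but prefaces the lemma by noting that $\N\backslash T$ is finite, which is exactly your observation that any $S\supseteq T$ has $\N\backslash S\subseteq \N\backslash T$ and is determined by its gap set. Your write-up simply makes the injection $S\mapsto \N\backslash S$ explicit.
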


By applying Lemma \ref{lemma1} and that every ratio-covariety has a minimum, we obtain the following result. 
\begin{proposition}\label{proposition2}
	Every ratio-covariety has a finite cardinality.	
\end{proposition}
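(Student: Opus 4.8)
The plan is to leverage the two facts we are explicitly allowed to assume: Lemma~\ref{lemma1}, which says that for any fixed numerical semigroup $T$ the collection of numerical semigroups containing $T$ is finite, together with condition~1) in the definition of a ratio-covariety, namely that $\sR$ possesses a minimum element $\min(\sR)$. The key observation is that condition~1) is exactly a uniform containment condition: since $\min(\sR)$ is the minimum of $\sR$ with respect to inclusion (being a minimum in a family of sets), every element $S\in\sR$ must satisfy $\min(\sR)\subseteq S$.

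First I would unpack what ``minimum of $\sR$'' means. The family $\sR$ is a family of numerical semigroups, and the natural order making condition~1) meaningful is set inclusion; so $\min(\sR)$ being the minimum forces $\min(\sR)\subseteq S$ for every $S\in\sR$. This is the crucial step that ties the abstract axiom to a concrete containment. I would state this as the first line of the argument, perhaps citing that $\sR$ is nonempty (part of the definition) so that $\min(\sR)$ genuinely exists and the comparison is legitimate.

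Once containment is established, the proof is essentially immediate. Setting $T=\min(\sR)$, Lemma~\ref{lemma1} tells us that the set
$$
\{S\mid S \mbox{ is a numerical semigroup and } \min(\sR)\subseteq S\}
$$
is finite. Since every element of $\sR$ is a numerical semigroup containing $\min(\sR)$, we have the inclusion
$$
\sR\subseteq \{S\mid S \mbox{ is a numerical semigroup and } \min(\sR)\subseteq S\},
$$
and a subset of a finite set is finite. Therefore $\sR$ has finite cardinality, which is exactly the assertion of Proposition~\ref{proposition2}.

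I do not anticipate a genuine obstacle here; the result is a short corollary rather than a substantial theorem. The only point requiring a moment of care is the interpretation of ``minimum'' in condition~1) as minimality under inclusion rather than under some other order (for instance, by Frobenius number or genus), and the verification that a minimum in the inclusion order really does embed inside every member of the family. Conditions~2) and~3) play no role in this particular proof; they are reserved for the structural results on the associated tree developed later in the section.
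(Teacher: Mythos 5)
Your argument is correct and is exactly the one the paper intends: the paper derives Proposition~\ref{proposition2} directly from Lemma~\ref{lemma1} applied to $T=\min(\sR)$, using that every member of $\sR$ contains the minimum. No difference in approach and no gaps.
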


A {\it graph} $G$ is a pair $(V,E)$ where $V$ is a nonempty set and
$E$ is a subset of $\{(u,v)\in V\times V \mid u\neq v\}$. The
elements of $V$ and $E$ are called {\it vertices} and {\it edges},
respectively. A {\it path, of
	length $n$,} connecting the vertices $x$ and $y$ of $G$ is a
sequence of different edges of the form $(v_0,v_1),
(v_1,v_2),\ldots,(v_{n-1},v_n)$ such that $v_0=x$ and $v_n=y$.

A graph $G$ is {\it a tree} if there exists a vertex $r$ (known as
{\it the root} of $G$) such that for any other vertex $x$ of $G,$
there exists a unique path connecting $x$ and $r$. If  $(u,v)$ is an
edge of the tree $G$, we say that $u$ is a {\it child} of $v$.

Let $\sR$ be a ratio-covariety and $S\in \sR.$ Recurrently define the following sequence of elements of $\sR:$
\begin{itemize}
	\item $S_0 = S$,
	\item $S_{n+1} =\left\{\begin{array}{lcl}
		S_n \backslash \{\r(S_n)\} & &\mbox{if }\,\,  S_n\neq \min(\sR),\\
		\min(\sR) &  &\mbox{otherwise.}
	\end{array}
	\right.$
\end{itemize}
The following result has an immediate proof.

\begin{lemma}\label{lemma3}
	Let $\sR$ be a ratio-covariety, $S\in \sR$ and let $\{S_n\}_{n\in \N}$ be the sequence of elements of $\sR$ defined above. Then there exists $k\in \N$ such that $\min(\sR)=S_k\subsetneq S_{k-1}\subsetneq \cdots \subsetneq S_0=S.$ Moreover,  $\sharp(S_i\backslash S_{i+1})=1$ for all $i\in \{0,1,\cdots,k-1\}.$
\end{lemma}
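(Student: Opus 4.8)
The plan is to analyze the sequence $\{S_n\}_{n\in\N}$ and show it stabilizes at $\min(\sR)$ after finitely many strict-containment steps. First I would observe that as long as $S_n \neq \min(\sR)$, property (3) of a ratio-covariety guarantees that $S_n \backslash \{\r(S_n)\} \in \sR$, so the sequence is well-defined and stays inside $\sR$. The key structural fact is that removing the ratio genuinely removes an element: since $\r(S_n) \in \msg(S_n)$ is a minimal generator, $S_n \backslash \{\r(S_n)\}$ is again a numerical semigroup (one checks closure under addition, using that a minimal generator cannot be written as a sum of two nonzero elements of the semigroup), and it is a proper subset of $S_n$ with exactly one fewer element. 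This immediately gives $\sharp(S_n \backslash S_{n+1}) = 1$ whenever $S_n \neq \min(\sR)$.

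Next I would use a descent/finiteness argument. Because each strict step removes exactly one element and all the $S_n$ are numerical semigroups containing $\min(\sR)$, the genera satisfy $\g(S_{n+1}) = \g(S_n) + 1$ as long as we have not yet reached the minimum; equivalently, the chain $S_0 \supsetneq S_1 \supsetneq \cdots$ is strictly decreasing. Since every $S_n \supseteq \min(\sR)$, Lemma~\ref{lemma1} (applied with $T = \min(\sR)$) tells us there are only finitely many numerical semigroups available, so a strictly decreasing chain cannot continue forever. Hence there is a smallest index $k$ at which $S_k = \min(\sR)$; for all $i < k$ we have $S_i \neq \min(\sR)$, so the containments $\min(\sR) = S_k \subsetneq S_{k-1} \subsetneq \cdots \subsetneq S_0 = S$ are strict and each gap has cardinality one.

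I expect the only genuinely substantive point to be the verification that $S_n \backslash \{\r(S_n)\}$ is still a numerical semigroup and that the removed element is exactly one (the claim $\sharp(S_i \backslash S_{i+1}) = 1$), which rests on $\r(S_n)$ being a minimal generator; everything else is bookkeeping and a straightforward appeal to the finiteness furnished by Lemma~\ref{lemma1}. The termination itself is the easy part once finiteness is in hand, which is presumably why the authors call the proof immediate. If one wanted to avoid even invoking Lemma~\ref{lemma1}, an alternative is to note that $\g(S_0)$ is finite and the genus strictly increases by one at each step while remaining bounded by $\g(\min(\sR))$, forcing termination after exactly $k = \g(\min(\sR)) - \g(S)$ steps.
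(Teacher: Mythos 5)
Your argument is correct; the paper omits the proof entirely (declaring it immediate), and your reasoning---well-definedness via axiom (3), $\sharp(S_i\backslash S_{i+1})=1$ because $\r(S_i)\in S_i$, and termination from the finiteness of semigroups containing $\min(\sR)$ (Lemma~\ref{lemma1}) or equivalently the bounded strictly increasing genus---is exactly the intended one. The only superfluous step is your verification that $S_n\backslash\{\r(S_n)\}$ is a numerical semigroup: axiom (3) of a ratio-covariety already places it in $\sR$, so no closure check is needed.
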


If $\sR$ is a ratio-covariety, then we define the  graph $\G(\sR)$ as follows: $\sR$ is the set of vertices and 
$(S,T)\in \sR \times \sR$ is an edge of $\G(\sR)$ if and only if $T=S\backslash \{\r(S)\}.$

As a consequence from Lemma \ref{lemma3}, we have the following  result.

\begin{proposition}\label{proposition4} Let $\sR$ be a ratio-covariety.  Then
	$\G(\sR)$  is a tree with root $\min(\sR).$	
\end{proposition}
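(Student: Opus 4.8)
The plan is to show that $\G(\sR)$ satisfies the definition of a tree with root $\min(\sR)$, namely that for every vertex $S \in \sR$ there exists a \emph{unique} path connecting $S$ to $\min(\sR)$. I would split this into an existence claim and a uniqueness claim, handling existence first since it follows almost immediately from the machinery already set up.

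For existence, I would invoke Lemma \ref{lemma3}. Given $S \in \sR$, the lemma produces the chain $\min(\sR) = S_k \subsetneq S_{k-1} \subsetneq \cdots \subsetneq S_0 = S$ with $S_{i+1} = S_i \backslash \{\r(S_i)\}$ for each $i$. By the very definition of the edge relation in $\G(\sR)$, each pair $(S_i, S_{i+1})$ is an edge, so the sequence $(S_0,S_1),(S_1,S_2),\ldots,(S_{k-1},S_k)$ is a path connecting $S$ to $\min(\sR)$. If $S = \min(\sR)$ then $k=0$ and the path is empty (the trivial path), which is the required connection of the root to itself. This establishes that at least one path exists.

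The uniqueness claim is where the real content lies, and I expect it to be the main obstacle. The key structural fact is that the edge relation is deterministic in the forward direction: from any vertex $S \neq \min(\sR)$ there is exactly one outgoing edge, namely to $S \backslash \{\r(S)\}$, since the ratio $\r(S)$ is uniquely determined by $S$. Moreover, the second sentence of Lemma \ref{lemma3} gives that $\sharp(S_i \backslash S_{i+1}) = 1$, so an edge $(S,T)$ forces $S \supsetneq T$ with $\sharp(S \backslash T) = 1$; in particular every edge strictly decreases the cardinality of the complement $\N \backslash S$ (equivalently, strictly increases the genus), so $\G(\sR)$ contains no cycles and every path must be finite and strictly ``ascending'' in genus toward the root. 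To prove uniqueness I would argue that any path from $S$ to $\min(\sR)$ must coincide with the canonical one: at each vertex the sole available edge is the one prescribed by the definition, so the path is forced step by step. Concretely, I would induct on the genus of $S$ (or on $k$): if $S = \min(\sR)$ there is nothing to prove, and if $S \neq \min(\sR)$ then any path out of $S$ must begin with the unique edge $(S, S\backslash\{\r(S)\})$, after which the remaining path from $S \backslash \{\r(S)\}$ to $\min(\sR)$ is unique by the induction hypothesis.

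The subtle point I would be careful about is the definition of \emph{path} in this paper: edges are ordered pairs and a path is a sequence of \emph{distinct} edges of the form $(v_0,v_1),(v_1,v_2),\ldots$ with matching endpoints, but the definition as stated does not insist that the underlying vertices be distinct, so I must rule out any detour that could revisit vertices. This is exactly where the strict monotonicity of the genus along edges is decisive: since each edge strictly increases the genus, no vertex can recur along a path, and since $\min(\sR)$ has the smallest genus in $\sR$ it admits no outgoing edge and hence can only appear as a terminal vertex. Combining the forward determinacy of edges with this monotonicity forces every path from $S$ to the root to be the canonical chain of Lemma \ref{lemma3}, which completes the uniqueness argument and hence the proof that $\G(\sR)$ is a tree rooted at $\min(\sR)$.
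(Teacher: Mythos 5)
Your proof is correct and follows the same route as the paper, which simply declares the proposition an immediate consequence of Lemma \ref{lemma3}: your existence argument is exactly that, and your uniqueness argument (forward determinacy of the edge relation, plus monotonicity along edges) is the natural filling-in of the details the paper omits. One small slip in wording: along an edge $(S,T)$ we have $T\subsetneq S$, so the complement $\N\backslash T$ \emph{grows} and $\min(\sR)$ has the \emph{largest} genus in $\sR$, not the smallest -- but the conclusion you draw, that the root admits no outgoing edge and no vertex can recur on a path, still holds (indeed more directly because $\min(\sR)\backslash\{\r(\min(\sR))\}$ cannot lie in $\sR$).
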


A tree can be  built recurrently starting from the root  and connecting, 
through an edge, the vertices already built with  their  children. Hence, it is very interesting to characterize the children of an arbitrary vertex in the tree.\\

Following the notation introduced in \cite{JPAA}, an integer $z$ is a {\it pseudo-Frobenius number} of a numerical semigroup $S$ if $z\notin S$ and $z+s\in S$ for all $s\in S\backslash \{0\}.$  We denote by $\PF(S)$
the set formed by the  pseudo-Frobenius numbers of $S.$ The cardinality of $\PF(S)$ is an important invariant of $S$ (see \cite{froberg} and \cite{barucci}) called the {\it type} of $S.$  
	
	Given $S$  a numerical semigroup, we denote by  $\SG(S)=\{x\in \PF(S)\mid 2x \in S\}.$ Its elements will be called {\it special gaps} of $S.$ The following result is \cite[Proposition 4.33]{libro}.
\begin{lemma}\label{lemma6}
	Let $S$ be a numerical semigroup and $x\in \N\backslash S.$ Then $x\in \SG(S)$ if and only if $S \cup \{x\}$ is a numerical semigroup.
\end{lemma}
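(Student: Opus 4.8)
The plan is to observe that, for $x \in \N \setminus S$, the set $S \cup \{x\}$ automatically satisfies two of the three requirements for being a numerical semigroup: its complement $\N \setminus (S \cup \{x\}) \subseteq \N \setminus S$ is finite, and $0 \in S \subseteq S \cup \{x\}$. Hence both implications reduce to the single question of whether $S \cup \{x\}$ is closed under addition, and the whole proof becomes a case analysis driven by the definitions of $\PF(S)$ and $\SG(S)$. A preliminary remark used throughout is that $x \geq 1$, since $0 \in S$ forces $x \neq 0$.

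For the implication $x \in \SG(S) \Rightarrow S \cup \{x\}$ is a numerical semigroup, I would take arbitrary $a, b \in S \cup \{x\}$ and show $a + b \in S \cup \{x\}$ by splitting into cases. If $a, b \in S$, then $a + b \in S$ by closure of $S$. If exactly one of them equals $x$, say $a = x$ and $b \in S$, then either $b = 0$ and $a + b = x$, or $b \in S \setminus \{0\}$ and $a + b = x + b \in S$ because $x \in \PF(S)$. Finally, if $a = b = x$, then $a + b = 2x \in S$ precisely because $x \in \SG(S)$. In every case the sum lies in $S \cup \{x\}$, so $S \cup \{x\}$ is a submonoid of $(\N,+)$ with finite complement, i.e. a numerical semigroup.

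For the converse, assuming $S \cup \{x\}$ is a numerical semigroup, I would extract the two defining conditions of $\SG(S)$ directly from closure under addition. Since $x + x \in S \cup \{x\}$ and $2x \neq x$ (as $x \geq 1$), we obtain $2x \in S$. Similarly, for each $s \in S \setminus \{0\}$ we have $x + s \in S \cup \{x\}$ and $x + s \neq x$, hence $x + s \in S$; together with $x \notin S$ this says exactly that $x \in \PF(S)$. Combining the two conclusions yields $x \in \SG(S)$.

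The argument involves no genuine obstacle; the only point requiring care is the repeated use of $x \geq 1$ to guarantee that $2x$ and $x + s$ (for $s \neq 0$) are distinct from $x$, which is what forces these sums to lie in $S$ rather than merely in $S \cup \{x\}$.
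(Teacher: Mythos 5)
Your proof is correct and complete. Note that the paper does not prove this lemma at all: it simply cites it as Proposition 4.33 of the reference \cite{libro}, so there is no in-paper argument to compare against. Your direct verification is exactly the standard one: the finiteness of the complement and the presence of $0$ are automatic, so everything reduces to closure under addition, and your case analysis (two elements of $S$; one element equal to $x$; both equal to $x$) matches the two defining conditions $x\in\PF(S)$ and $2x\in S$ precisely. The one point of care you flag --- using $x\geq 1$ and $s\neq 0$ to conclude that $2x$ and $x+s$ are distinct from $x$, hence land in $S$ rather than merely in $S\cup\{x\}$ --- is indeed the only subtlety, and you handle it correctly.
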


\begin{example}\label{example8}Let $S=\langle 3,5,7 \rangle=\{0,3,5, \rightarrow\},$ then it is clear that $\PF(S)=\{2,4\}$ and $\SG(S)=\{4\}.$
\end{example}

The above calculations can be performed using the following gap orders:
\begin{verbatim}
	gap> S := NumericalSemigroup(3,5,7);
	<Numerical semigroup with 3 generators>
	gap> PseudoFrobenius(S);
	[ 2, 4 ]
	gap> SpecialGaps(S);
	[ 4 ]	
\end{verbatim}

\begin{proposition}\label{proposition7}
	Let $\sR$ be a ratio-covariety and $S\in \sR.$ Then the set formed by the children of $S$ in the tree $\G(\sR)$ is 
	$$
	\{S\cup \{x\}\mid x\in \SG(S),\, \m(S)<x<\r(S) \mbox{ and }S \cup \{x\}\in \sR\}.$$
		
\end{proposition}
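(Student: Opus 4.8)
The plan is to unwind the edge relation and reduce the whole statement to a single clean equivalence about ratios. By definition of $\G(\sR)$, a vertex $T$ is a child of $S$ exactly when $(T,S)$ is an edge, i.e. when $S = T\setminus\{\r(T)\}$. Setting $x:=\r(T)$, this forces $T = S\cup\{x\}$ with $x\notin S$, and conversely every child of $S$ has this shape. So the task becomes: characterize those $x$ for which $T=S\cup\{x\}\in\sR$ is a numerical semigroup with $\r(T)=x$. Since the membership $T\in\sR$ appears both in the target set and in the notion of ``vertex'', I would set that condition aside and concentrate on the remaining two requirements, namely that $S\cup\{x\}$ be a numerical semigroup and that its ratio equal $x$.

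The numerical-semigroup requirement is disposed of immediately: because $x\notin S$, Lemma~\ref{lemma6} tells us that $S\cup\{x\}$ is a numerical semigroup if and only if $x\in\SG(S)$. This accounts for one of the three listed constraints and is used in both directions.

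The heart of the matter is the equivalence $\r(S\cup\{x\})=x \iff \m(S)<x<\r(S)$, granted $x\in\SG(S)$. The crucial preliminary observation is that every positive multiple of $\m(S)$ lies in $S$ (as $\m(S)\in S$ and $S$ is closed under addition); hence for $x\in\SG(S)\subseteq\N\setminus S$ we automatically get $\m(S)\nmid x$. Next I would compute the multiplicity of $T=S\cup\{x\}$: since $T\setminus\{0\}=(S\setminus\{0\})\cup\{x\}$, we have $\m(T)=\min\{\m(S),x\}$. Because $\r(T)\neq\m(T)$ by definition of the ratio, requiring $\r(T)=x$ forces $x\neq\m(T)$, which (combined with $x\neq\m(S)$, as $\m(S)\in S$) is exactly the condition $x>\m(S)$; in that regime $\m(T)=\m(S)$. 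Then, using $\r(T)=\min\{s\in T\mid\m(T)\nmid s\}=\min\{s\in T\mid\m(S)\nmid s\}$, the elements of $T$ not divisible by $\m(S)$ are those of $S$ not divisible by $\m(S)$ — whose minimum is $\r(S)$ — together with $x$, so $\r(T)=\min\{\r(S),x\}$. Since $x\notin S$ while $\r(S)\in S$, we have $x\neq\r(S)$, whence $\r(T)=x$ is equivalent to $x<\r(S)$. Assembling $x>\m(S)$ and $x<\r(S)$ yields the stated range.

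With these pieces both inclusions are then immediate: for a child $T$ I set $x=\r(T)$ and read off $x\in\SG(S)$, $\m(S)<x<\r(S)$ and $T\in\sR$; for an $x$ in the target set, Lemma~\ref{lemma6} gives that $S\cup\{x\}$ is a numerical semigroup and the computation above gives $\r(T)=x$, so $S=T\setminus\{\r(T)\}$ and $T$ is a child. The one point demanding genuine care — and the only place the argument could slip — is that adjoining $x$ might lower the multiplicity: the ratio $\r(T)$ is defined relative to $\m(T)$, not $\m(S)$, so I must first rule out $x\leq\m(S)$ before replacing $\m(T)$ by $\m(S)$ in the ratio computation. This is precisely what the lower bound $\m(S)<x$ secures, which is why that constraint is forced rather than cosmetic.
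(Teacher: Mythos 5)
Your proof is correct and follows essentially the same route as the paper's: identify a child as $S\cup\{x\}$ with $x=\r(T)$, invoke Lemma~\ref{lemma6} to translate ``$S\cup\{x\}$ is a numerical semigroup'' into $x\in\SG(S)$, and reduce everything to the equivalence $\r(S\cup\{x\})=x \iff \m(S)<x<\r(S)$. The only difference is one of detail: the paper simply asserts that $\m(S)<x<\r(S)$ implies $\r(S\cup\{x\})=x$, whereas you actually verify it (via $\m(S)\nmid x$, $\m(S\cup\{x\})=\min\{\m(S),x\}$ and $\r(S\cup\{x\})=\min\{\r(S),x\}$), so your write-up supplies the computation the published proof leaves implicit.
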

\begin{proof}
	If $T$ is a child of $S,$ then $T\in \sR $ and $T\backslash  \{\r(T)\}=S.$ Therefore, $T=S\cup \{\r(T)\}\in \sR,$ $\r(T)\in \SG(S)$ and $\m(S)<\r(T)<\r(S).$
	
	If $\m(S)<x<\r(S),$ then $\r(S\cup \{x\})=x.$ Hence, $S\cup \{x\}\in \sR$ and $\left(S\cup \{x\} \right)\backslash \{\r(S\cup \{x\})\}=S.$ Consequently, $S\cup \{x\}$ is a child of $S$ in the tree $\G(\sR).$
\end{proof}

%

\section{An algorithm for  computing  $\sR(F,m)$}
Throughout this section, $m$ and $F$ denote positive integers such that $m<F$ and $m\nmid F$. Recall that $$\sR(F,m)=\{S\mid S \mbox{ is a numerical semigroup, } \F(S)=F \mbox{ and }\m(S)=m \}.$$

The following result has an easy proof.
\begin{lemma}\label{lemma9} 
With the above notation, we have that $$\Delta(F,m)=\langle m \rangle \cup \{F+1,\rightarrow\}$$ is the minimum of $\sR(F,m).$	
\end{lemma}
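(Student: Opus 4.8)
The plan is to show two things: first that $\Delta(F,m)=\langle m \rangle \cup \{F+1,\rightarrow\}$ is itself an element of $\sR(F,m)$, and second that it is contained in every other member of $\sR(F,m)$. Before either step I would verify that $\Delta(F,m)$ is a genuine numerical semigroup. Since $m<F$ and $m\nmid F$, the set $\langle m \rangle = \{0,m,2m,\ldots\}$ together with the tail $\{F+1,\rightarrow\}$ is closed under addition (adding anything in the tail to a nonzero element lands in the tail, and multiples of $m$ add to multiples of $m$), contains $0$, and has finite complement, so it is a numerical semigroup. Here I must be slightly careful that $\langle m\rangle$ and $\{F+1,\rightarrow\}$ interact correctly; since $m\nmid F$ we have $F\notin\langle m\rangle$, so $F$ is genuinely a gap of $\Delta(F,m)$.

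Next I would confirm $\Delta(F,m)\in\sR(F,m)$, i.e.\ that it has the prescribed multiplicity and Frobenius number. Its multiplicity is the least nonzero element; because $m<F$, the smallest positive element of $\langle m\rangle\cup\{F+1,\rightarrow\}$ is $m$, so $\m(\Delta(F,m))=m$. For the Frobenius number, $F$ is a gap (as noted above, using $m\nmid F$), and every integer exceeding $F$ lies in the tail $\{F+1,\rightarrow\}\subseteq\Delta(F,m)$, so $F$ is the largest gap and $\F(\Delta(F,m))=F$. Thus $\Delta(F,m)$ indeed belongs to $\sR(F,m)$.

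For the minimality, I would take an arbitrary $S\in\sR(F,m)$ and show $\Delta(F,m)\subseteq S$, which suffices because $\sR(F,m)$ is ordered by inclusion and the minimum is the smallest with respect to this order. Let $x\in\Delta(F,m)$. Either $x\in\langle m\rangle$ or $x\geq F+1$. In the first case, $x$ is a nonnegative multiple of $m=\m(S)\in S$, so $x\in S$ since $S$ is a submonoid. In the second case $x>F=\F(S)$, and by definition of the Frobenius number every integer larger than $\F(S)$ belongs to $S$, so $x\in S$. Hence $\Delta(F,m)\subseteq S$.

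I do not expect any genuine obstacle here; the statement is elementary and its proof is a routine verification, which is presumably why the authors label it ``easy.'' The only point demanding a little attention is the bookkeeping around the hypotheses $m<F$ and $m\nmid F$: the former guarantees the multiplicity is $m$ (rather than something in the tail), and the latter guarantees $F$ is actually a gap so that the Frobenius number equals $F$ rather than dropping below it. Once these two hypotheses are invoked in the right places, both the membership $\Delta(F,m)\in\sR(F,m)$ and the containment $\Delta(F,m)\subseteq S$ follow directly from the definitions.
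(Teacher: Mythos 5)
Your proof is correct and follows the standard routine verification that the paper itself omits (it only remarks that the result ``has an easy proof''): you check that $\Delta(F,m)$ is a numerical semigroup with multiplicity $m$ and Frobenius number $F$, using $m<F$ and $m\nmid F$ in exactly the right places, and then show containment in any $S\in\sR(F,m)$ via closure under addition and the definition of the Frobenius number. Nothing is missing.
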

The next result is well known and easy to demonstrate. 

\begin{lemma}\label{lemma10}
	Let $S$ and $T$ be a numerical semigroups and $x\in S.$ Then the following conditions hold:
	\begin{enumerate}
		\item $S\cap T$ is a numerical semigroup and $\F(S\cap T)=\max\{\F(S),\F(T)\}.$
		\item $S\backslash \{x\}$ is a numerical semigroup if and only if $x\in \msg(S).$
	\end{enumerate}
\end{lemma}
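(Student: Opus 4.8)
The plan is to handle the two parts separately, each by a direct verification against the definition of a numerical semigroup (a submonoid of $(\N,+)$ with finite complement), together with the standard description of a minimal generator as an element that is not a sum of two nonzero elements.

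For part (1), I would first check that $S\cap T$ is a submonoid of $\N$: it contains $0$, and if $a,b\in S\cap T$ then $a+b$ lies in both $S$ and $T$, hence in $S\cap T$. Finiteness of the complement is immediate from $\N\backslash(S\cap T)=(\N\backslash S)\cup(\N\backslash T)$, a union of two finite sets. For the Frobenius number, I would observe that every integer strictly greater than $\max\{\F(S),\F(T)\}$ belongs to both $S$ and $T$, hence to $S\cap T$, whereas $\max\{\F(S),\F(T)\}$ itself fails to lie in whichever of $S$, $T$ realises the maximum, and therefore is not in $S\cap T$. This pins down $\F(S\cap T)=\max\{\F(S),\F(T)\}$.

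For part (2), the key tool is the well-known characterization $\msg(S)=(S\backslash\{0\})\backslash\bigl((S\backslash\{0\})+(S\backslash\{0\})\bigr)$, that is, the minimal generators are exactly the nonzero elements of $S$ that cannot be expressed as a sum of two nonzero elements of $S$ (see \cite{libro}). For the implication ($\Leftarrow$), assuming $x\in\msg(S)$, I would show $S\backslash\{x\}$ is closed under addition: given $a,b\in S\backslash\{x\}$, the sum $a+b$ lies in $S$, and it cannot equal $x$, since otherwise (discarding the trivial cases $a=0$ or $b=0$) $x$ would be a sum of two nonzero elements of $S$, contradicting $x\in\msg(S)$; finiteness of $\N\backslash(S\backslash\{x\})=(\N\backslash S)\cup\{x\}$ is clear. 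For ($\Rightarrow$), I would argue by contraposition: if $x\notin\msg(S)$ then either $x=0$, in which case $S\backslash\{x\}$ is not even a monoid, or $x=a+b$ with $a,b\in S\backslash\{0\}$; since $a,b>0$ force $a,b<x$, both lie in $S\backslash\{x\}$ while their sum $x$ does not, so $S\backslash\{x\}$ fails to be a submonoid.

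I do not expect a serious obstacle here: both parts reduce to elementary closure and finiteness checks. The only point requiring care is part (2), where one must invoke the sum-of-two-nonzero-elements characterization of minimal generators and separate the degenerate cases $a=0$, $b=0$, and $x=0$ so that the reasoning is airtight.
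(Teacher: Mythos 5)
The paper does not prove this lemma at all --- it is stated as ``well known and easy to demonstrate'' and cited implicitly to the standard literature --- so there is no in-paper argument to compare against. Your proposal correctly supplies the standard verification: part (1) via closure, finiteness of $(\N\backslash S)\cup(\N\backslash T)$, and the observation that every integer exceeding $\max\{\F(S),\F(T)\}$ lies in both semigroups while the maximum itself is missed by whichever semigroup attains it; part (2) via the characterization of $\msg(S)$ as the nonzero elements not expressible as a sum of two nonzero elements, with the degenerate cases $a=0$, $b=0$, $x=0$ properly separated. I see no gaps; this is exactly the proof the authors had in mind when they declared the result easy.
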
 
According to Lemmas \ref{lemma9} and \ref{lemma10},  we can easily deduce the following result.

\begin{proposition}\label{proposition11}
	$\sR(F,m)$ is a ratio-covariety.
\end{proposition}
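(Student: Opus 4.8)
The plan is to verify directly that $\sR(F,m)$ satisfies the three defining conditions of a ratio-covariety, using Lemmas~\ref{lemma9} and \ref{lemma10} as the main tools. First I would note that $\sR(F,m)$ is nonempty: since $m < F$ and $m \nmid F$, the semigroup $\Delta(F,m) = \langle m \rangle \cup \{F+1,\rightarrow\}$ from Lemma~\ref{lemma9} is a genuine element of the family, so the family is a legitimate candidate for a ratio-covariety.

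For condition (1), the existence of a minimum, I would simply invoke Lemma~\ref{lemma9}, which asserts exactly that $\Delta(F,m) = \min(\sR(F,m))$; nothing further is needed. For condition (2), closure under intersection, I would take $\{S,T\} \subseteq \sR(F,m)$ and apply part~(1) of Lemma~\ref{lemma10}: $S \cap T$ is a numerical semigroup with $\F(S\cap T) = \max\{\F(S),\F(T)\} = F$. The point that needs a small additional remark is the multiplicity: since $\m(S) = \m(T) = m$, every nonzero element of $S$ and of $T$ is at least $m$, hence every nonzero element of $S \cap T$ is at least $m$, and since $m \in S \cap T$ we get $\m(S \cap T) = m$. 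Therefore $S \cap T \in \sR(F,m)$.

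For condition (3), I would take $S \in \sR(F,m)$ with $S \neq \min(\sR(F,m)) = \Delta(F,m)$ and show $S \backslash \{\r(S)\} \in \sR(F,m)$. The crucial observation is that $\r(S) \in \msg(S)$ by the very definition $\r(S) = \min(\msg(S) \backslash \{\m(S)\})$, so part~(2) of Lemma~\ref{lemma10} guarantees that $S \backslash \{\r(S)\}$ is again a numerical semigroup. It then remains to check that removing the ratio preserves both the Frobenius number and the multiplicity. Since $\r(S) \in S$ we certainly have $\r(S) \leq F = \F(S)$, and removing an element of $S$ cannot decrease the largest gap, so $\F(S\backslash\{\r(S)\}) = F$ will follow once one confirms $\r(S) \neq F$; and the multiplicity is preserved because $\r(S) > \m(S)$, so the smallest nonzero element $m$ is untouched.

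The main obstacle, and the only place requiring genuine care, is justifying in condition~(3) that $\r(S) \neq F$, since if the ratio happened to equal the Frobenius number then removing it would change $\F$ and could push $S\backslash\{\r(S)\}$ out of the family (or fail to be a numerical semigroup with the right invariants). The key is to exploit the hypothesis $S \neq \Delta(F,m)$: because $\Delta(F,m)$ is the minimum and is the unique element of $\sR(F,m)$ whose only ``small'' generator is the ratio-free tail, any $S$ strictly containing information beyond $\Delta(F,m)$ must have its ratio strictly below $F$ (indeed $\r(S) \in S$ forces $\r(S) \le F$, and $\r(S)=F$ would contradict $F \notin S$). I would spell out this inequality explicitly, as it is the linchpin that makes the removal operation land back inside $\sR(F,m)$; the remaining verifications are the routine closure facts supplied by Lemma~\ref{lemma10}.
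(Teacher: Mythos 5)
Your approach---verifying the three axioms directly from Lemmas~\ref{lemma9} and \ref{lemma10}---is exactly the route the paper intends (it simply declares the proposition ``easily deduced'' from those two lemmas), and your treatment of conditions (1) and (2) is correct, including the extra observation that $\m(S\cap T)=m$. The one step that does not actually close is the one you yourself call the linchpin: the claim that ``$\r(S)\in S$ forces $\r(S)\le F$'' is false as a standalone implication, since for $\Delta(F,m)$ itself every element not divisible by $m$ exceeds $F$, so $\r(\Delta(F,m))\ge F+1$. The inequality $\r(S)<F$ must instead be extracted from the hypothesis $S\neq\min(\sR(F,m))$ as follows: by Lemma~\ref{lemma9} we have $\Delta(F,m)\subsetneq S$, so there exists $x\in S\setminus\Delta(F,m)$; such an $x$ satisfies $x\le F$ (all integers greater than $F$ lie in $\Delta(F,m)$), hence $x<F$ because $F\notin S$, and $m\nmid x$ because $\langle m\rangle\subseteq\Delta(F,m)$; therefore $\r(S)\le x<F$. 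With $\r(S)<F$ established this way, the rest of your condition (3) argument---Lemma~\ref{lemma10}(2) for $S\setminus\{\r(S)\}$ being a numerical semigroup, $\r(S)>m$ for preservation of the multiplicity, and the fact that the set of gaps only acquires the new element $\r(S)<F$ for preservation of the Frobenius number---goes through as you wrote it.
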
 
As a consequence of Propositions \ref{proposition7} and \ref{proposition11}, we can charaterize the children of the tree $\G(\sR(F,m)).$
\begin{proposition}\label{proposition12}
	If $S\in \sR(F,m),$ then the  children of $S$ in the tree $\G(\sR(F,m))$ is the set
	$
	\{S\cup \{x\}\mid x\in \SG(S),\, m<x<\r(S) \mbox{ and }x\neq F \}.
	$ 
	
\end{proposition}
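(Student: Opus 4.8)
The plan is to invoke the general description of children in a ratio-covariety (Proposition \ref{proposition7}) and then to simplify the membership condition it produces for the specific family $\sR(F,m)$. Since $\sR(F,m)$ is a ratio-covariety by Proposition \ref{proposition11} and $\m(S)=m$ because $S\in\sR(F,m)$, Proposition \ref{proposition7} gives that the children of $S$ in $\G(\sR(F,m))$ are exactly the semigroups $S\cup\{x\}$ with $x\in\SG(S)$, $m<x<\r(S)$, and $S\cup\{x\}\in\sR(F,m)$. So the whole task reduces to proving that, for such an $x$, the condition $S\cup\{x\}\in\sR(F,m)$ is equivalent to $x\neq F$.

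To carry this out, I would fix $x\in\SG(S)$ with $m<x<\r(S)$ and note, via Lemma \ref{lemma6}, that $S\cup\{x\}$ is again a numerical semigroup; hence it lies in $\sR(F,m)$ exactly when $\m(S\cup\{x\})=m$ and $\F(S\cup\{x\})=F$. First I would dispose of the multiplicity: since $m=\m(S)\in S\subseteq S\cup\{x\}$ and $x>m$, the least nonzero element of $S\cup\{x\}$ is still $m$, so $\m(S\cup\{x\})=m$ holds automatically. This leaves only the Frobenius number to analyse.

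The decisive point is that adjoining a single special gap deletes only that gap from the complement. Indeed $x\in\SG(S)\subseteq\PF(S)$ gives $x\notin S$, so $\N\setminus(S\cup\{x\})=(\N\setminus S)\setminus\{x\}$, where $F=\F(S)=\max(\N\setminus S)$. If $x\neq F$, then $F$ is still the maximum of $(\N\setminus S)\setminus\{x\}$, whence $\F(S\cup\{x\})=F$ and $S\cup\{x\}\in\sR(F,m)$; if $x=F$, then removing $F$ forces $\F(S\cup\{x\})<F$, so $S\cup\{x\}\notin\sR(F,m)$. Combining this with the multiplicity step yields the equivalence, and hence the claimed description of the children.

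I do not anticipate a genuine obstacle, as the argument is essentially a bookkeeping of gaps. The one place requiring care is the reduction itself: recognizing that the multiplicity requirement is free (thanks to $x>m$ and $m\in S$), so that the apparently strong hypothesis $S\cup\{x\}\in\sR(F,m)$ collapses to the single arithmetic condition $x\neq F$. I would also emphasise that $x=F$ can genuinely occur under $m<x<\r(S)$ (it happens whenever $\r(S)>F$), so that the condition $x\neq F$ is not vacuous and is truly needed in the statement.
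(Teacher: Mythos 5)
Your proposal is correct and follows exactly the route the paper intends: the paper states Proposition \ref{proposition12} as an immediate consequence of Propositions \ref{proposition7} and \ref{proposition11}, and your argument simply supplies the (omitted) verification that for $x\in\SG(S)$ with $m<x<\r(S)$ the condition $S\cup\{x\}\in\sR(F,m)$ reduces to $x\neq F$. No discrepancy with the paper's approach.
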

Given a numerical semigroup $S$ and $s\in S\backslash \{0\}$,  the{\it Apéry set} of $n$ in $S$ (in honour of \cite{apery}) is the set $\Ap(S,n)=\{s\in S\mid s-n \notin S\}$. This set has  $n$ elements, one per every congruence class modulo $n$. That is, $\Ap(S,n)=\{0=w(0),w(1), \dots, w(n-1)\}$, where $w(i)$ is the least
element of $S$ congruent with $i$ modulo $n$, for all $i\in
\{0,\dots, n-1\}$ (see \cite[Lemma 2.4]{libro}).

Given the numerical semigroup $S=\langle 6,8,13,15\rangle$, the set $\Ap(S,6)$  can be computed using the following gap orders:
\begin{verbatim}
	gap> S := NumericalSemigroup(6,8,13,15);
	<Numerical semigroup with 4 generators>
	gap> AperyList(S,6);
	[ 0, 13, 8, 15, 16, 23 ]
\end{verbatim}

Let $S$ be a numerical semigroup. We define over $\Z$ the  following order  relation:  $a\leq_S b$ if $b-a \in S.$ The following result is Lemma 10 from \cite{JPAA}.
\begin{lemma}\label{lemma14} If $S$ is  a numerical semigroup and $n \in S\backslash
	\{0\},$ then
	$$
	\PF(S)=\{w-n\mid w \in \Maximals \Ap(S,n)\}.
	$$
\end{lemma}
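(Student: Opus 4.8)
The plan is to prove the set equality $\PF(S)=\{w-n\mid w \in \Maximals \Ap(S,n)\}$ by establishing both inclusions, working directly from the definitions of the pseudo-Frobenius set, the Ap\'ery set, and the order $\leq_S$ (recall $a\leq_S b$ means $b-a\in S$). No deep machinery is needed; the whole content is a careful translation between "being a pseudo-Frobenius number" and "being $\leq_S$-maximal in $\Ap(S,n)$".

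For the inclusion $\PF(S)\subseteq\{w-n\mid w \in \Maximals \Ap(S,n)\}$, I would take $z\in\PF(S)$ and set $w=z+n$. Since $n\in S\backslash\{0\}$, the defining property of $z$ gives $w=z+n\in S$, while $w-n=z\notin S$; hence $w\in\Ap(S,n)$. To see that $w$ is $\leq_S$-maximal, I would argue by contradiction: suppose $w'\in\Ap(S,n)$ with $w\leq_S w'$ and $w'\neq w$, so that $w'-w\in S\backslash\{0\}$. Then
$$
w'-n=(z+n)+(w'-w)-n=z+(w'-w),
$$
and because $z\in\PF(S)$ and $w'-w\in S\backslash\{0\}$, this element lies in $S$. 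This contradicts $w'\in\Ap(S,n)$, which requires $w'-n\notin S$. Therefore $w$ is maximal and $z=w-n$ has the desired form.

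For the reverse inclusion, I would take $w\in\Maximals \Ap(S,n)$ and set $z=w-n$; by the definition of the Ap\'ery set, $z=w-n\notin S$. It remains to check $z+s\in S$ for every $s\in S\backslash\{0\}$. Fix such an $s$ and note $w+s\in S$. If $(w+s)-n\notin S$, then $w+s\in\Ap(S,n)$, and since $(w+s)-w=s\in S\backslash\{0\}$ we have $w<_S w+s$, contradicting the maximality of $w$. Hence $(w+s)-n=z+s\in S$, which is exactly the condition required, so $z\in\PF(S)$.

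The argument is short, so the only real obstacle is conceptual rather than computational: in each direction one must land the contradiction against membership in the Ap\'ery set (that $w'-n$, resp. $(w+s)-n$, should fail to be in $S$) rather than against some unrelated statement, and one must keep straight that maximality is taken with respect to $\leq_S$ and not the usual order on $\Z$. Once the substitutions $w=z+n$ and $z=w-n$ are made, both inclusions reduce to the same clean observation, namely that adding a nonzero element of $S$ to an Ap\'ery element either leaves the Ap\'ery set or violates maximality.
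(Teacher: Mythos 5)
Your proof is correct. The paper does not actually prove this lemma; it simply cites it as Lemma~10 of the reference \cite{JPAA}, so there is no in-paper argument to compare against. Your two inclusions are the standard direct verification from the definitions: the substitution $w=z+n$ together with the identity $w'-n=z+(w'-w)$ handles one direction, and the observation that $w+s\in\Ap(S,n)$ would contradict $\leq_S$-maximality handles the other. Both contradictions land exactly where they should, namely against the Ap\'ery-set condition that $w'-n$ (respectively $(w+s)-n$) must not lie in $S$, and you correctly use maximality with respect to $\leq_S$ rather than the usual order on $\Z$. Nothing is missing.
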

The next lemma  has an immediate proof.

\begin{lemma}\label{lemma15}
	Let $S$ be a numerical semigroup, $n\in S\backslash \{0\}$ and $w \in \Ap(S,n).$ Then $w\in \Maximals(\Ap(S,n))$ if and only if $w+w'\notin
	\Ap(S,n)$ for all $w'\in \Ap(S,n)\backslash \{0\}. $	
\end{lemma}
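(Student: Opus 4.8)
=== PROOF PROPOSAL ===

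The statement to prove is Lemma~\ref{lemma15}: for a numerical semigroup $S$, an element $n\in S\backslash\{0\}$, and $w\in\Ap(S,n)$, we have $w\in\Maximals(\Ap(S,n))$ (with respect to $\leq_S$) if and only if $w+w'\notin\Ap(S,n)$ for all $w'\in\Ap(S,n)\backslash\{0\}$.

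\textbf{The approach.} The plan is to argue both implications by unwinding the definition of the order $\leq_S$ together with the characterization of $\Ap(S,n)$ as the set of elements $s\in S$ with $s-n\notin S$. Recall that $w$ is maximal in $\Ap(S,n)$ with respect to $\leq_S$ precisely when there is no $v\in\Ap(S,n)$ with $w<_S v$, i.e. no $v\in\Ap(S,n)$ with $v-w\in S\backslash\{0\}$. I would take this reformulation as the pivot of the whole argument, so the real content is to translate ``there exists $v\in\Ap(S,n)$ with $v-w\in S\backslash\{0\}$'' into ``there exists $w'\in\Ap(S,n)\backslash\{0\}$ with $w+w'\in\Ap(S,n)$.''

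\textbf{Forward direction (contrapositive).} Suppose $w$ is \emph{not} maximal in $\Ap(S,n)$. Then there is some $v\in\Ap(S,n)$ with $v-w\in S\backslash\{0\}$; set $x=v-w\in S\backslash\{0\}$, so $v=w+x\in\Ap(S,n)$. The task is to produce a summand lying in the Apéry set itself. First I would note $x\notin\langle n\rangle$-shifts cause no trouble because $v\in\Ap(S,n)$ forces $v-n\notin S$. The key sublemma I expect to need is: if $w\in\Ap(S,n)$ and $w+x\in\Ap(S,n)$ with $x\in S$, then $x\in\Ap(S,n)$. This holds because $(w+x)-n\notin S$ together with $w\in S$ forces $x-n\notin S$ (otherwise $w+(x-n)\in S$ would give $(w+x)-n\in S$, a contradiction); hence $x\in\Ap(S,n)$. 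Taking $w'=x$ gives $w'\in\Ap(S,n)\backslash\{0\}$ with $w+w'\in\Ap(S,n)$, which is the negation of the right-hand condition.

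\textbf{Reverse direction.} Suppose there exists $w'\in\Ap(S,n)\backslash\{0\}$ with $w+w'\in\Ap(S,n)$. Then $w+w'\in\Ap(S,n)$ and $(w+w')-w=w'\in S\backslash\{0\}$, so $w<_S w+w'$, exhibiting an element of $\Ap(S,n)$ strictly above $w$; hence $w$ is not maximal. This direction is essentially immediate. The main obstacle is the sublemma in the forward direction — showing the difference $x=v-w$ actually lands back in $\Ap(S,n)$ rather than merely in $S$ — and I would present it as the one genuine computation, relying on the defining property $s\in\Ap(S,n)\iff s\in S$ and $s-n\notin S$ throughout.
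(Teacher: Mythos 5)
Your proof is correct; the paper itself gives no argument (it declares the lemma to have "an immediate proof"), and your direct unwinding of the definitions --- with the one genuine observation that the difference $x=v-w$ of two elements of $\Ap(S,n)$ again lies in $\Ap(S,n)$ because $x-n\in S$ would force $(w+x)-n\in S$ --- is exactly the verification the authors intend.
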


It is  straightforward to prove the following result.

\begin{lemma}\label{lemma16} If $S$ is a numerical semigroup and $S\neq \N,$ then $$\SG(S)=\{x\in \PF(S)\mid 2x \notin \PF(S)\}.$$
\end{lemma}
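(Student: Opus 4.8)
The plan is to reduce the asserted set equality to a pointwise equivalence, since both sides are by construction subsets of $\PF(S)$: by definition $\SG(S)=\{x\in\PF(S)\mid 2x\in S\}$, while the right-hand set is $\{x\in\PF(S)\mid 2x\notin\PF(S)\}$. Thus it suffices to prove that for every $x\in\PF(S)$ one has $2x\in S$ if and only if $2x\notin\PF(S)$. Before doing so I would record the elementary but essential fact that every $x\in\PF(S)$ is positive: since $S\neq\N$ gives $0\in S$ and every gap of $S$ is a positive integer, and $x\notin S$ by the definition of a pseudo-Frobenius number, we get $x\geq 1$.

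For the forward implication, I would suppose $2x\in S$. Because $\PF(S)\subseteq\N\backslash S$, no element of $S$ can be pseudo-Frobenius, so immediately $2x\notin\PF(S)$.

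For the converse I would argue by contraposition: assume $2x\notin S$ and deduce $2x\in\PF(S)$. Fix an arbitrary $s\in S\backslash\{0\}$. Since $x\in\PF(S)$, the defining property gives $x+s\in S$; moreover $x>0$ and $s>0$ force $x+s\in S\backslash\{0\}$. Applying the pseudo-Frobenius property of $x$ a second time, now to the element $x+s\in S\backslash\{0\}$, yields $2x+s=x+(x+s)\in S$. As $s$ was arbitrary and $2x\notin S$, this exhibits $2x$ as a pseudo-Frobenius number, i.e. $2x\in\PF(S)$, which completes the contrapositive and hence the equivalence.

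No step presents a genuine obstacle; the proof is a matter of unwinding the two defining conditions. The only point needing care is the positivity of $x$, which is precisely what licenses the double application of the defining property of $\PF(S)$ in the converse direction.
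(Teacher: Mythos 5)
Your proof is correct: the paper offers no proof of this lemma (it is declared ``straightforward''), and your argument --- reducing to the pointwise equivalence $2x\in S \Leftrightarrow 2x\notin\PF(S)$ for $x\in\PF(S)$, with the forward direction from $\PF(S)\subseteq\N\backslash S$ and the converse by applying the pseudo-Frobenius property of $x$ twice --- is exactly the intended elementary verification. Your attention to the positivity of $x$, which justifies that $x+s\in S\backslash\{0\}$ before the second application, is the one point of care and you handle it properly.
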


\begin{note}\label{note17}
	As a consequence of Lemmas \ref{lemma14}, \ref{lemma15} and \ref{lemma16}, observe that if $S$ is a numerical semigroup and we know $\Ap(S,n)$ for some $n\in S\backslash \{0\},$ then we can compute easily the set $\SG(S).$	
\end{note}
We will illustrate the previous note with an example.
\begin{example}\label{example18}
	If $S=\{0,7,\rightarrow\},$ then $\Ap(S,7)=\{0,8,9,10,11,12,13\}.$ By applying Lemma \ref{lemma15}, we have that $\Maximals(\Ap(S,7))=\{8,9,10,11,12,13\}.$ Then Lemma \ref{lemma14}, asserts that $\PF(S)=\{1,2,3,4,5,6\}.$ Finally, by using Lemma \ref{lemma16}, we have that $\SG(S)=\{4,5,6\}.$	
\end{example}

The following result has an easy proof.
\begin{lemma}\label{lemma19}
	If $S$ is a numerical semigroup, $n\in S \backslash \{0\}$ and $x\in \SG(S),$ then $x+n\in \Ap(S,n).$ Moreover, $\Ap(S\cup \{x\},n)=\left( \Ap(S,n)\backslash \{x+m\}\right)\cup \{x\}.$
\end{lemma}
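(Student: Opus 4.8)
The plan is to treat the two assertions separately, relying on the description of $\Ap(S,n)$ as the collection of least elements of $S$ in each congruence class modulo $n$ (as recalled after Proposition \ref{proposition12}).

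First I would establish that $x+n\in\Ap(S,n)$. Since $x\in\SG(S)\subseteq\PF(S)$, by definition $x\notin S$ while $x+s\in S$ for every $s\in S\backslash\{0\}$; taking $s=n$ gives $x+n\in S$. As $(x+n)-n=x\notin S$, the defining property $\Ap(S,n)=\{s\in S\mid s-n\notin S\}$ yields $x+n\in\Ap(S,n)$ at once. This also identifies $x+n$ as the least element of $S$ lying in the class of $x$ modulo $n$.

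For the second assertion I would first invoke Lemma \ref{lemma6} to guarantee that $T:=S\cup\{x\}$ is a numerical semigroup, so that $\Ap(T,n)$ is well defined and, like $\Ap(S,n)$, contains exactly one representative of each class modulo $n$, namely the least element of the respective semigroup in that class. The key observation is that $S$ and $T$ differ only in the single element $x$, whose residue I denote $i_0:=x\bmod n$. Writing $\Ap(S,n)=\{w_S(i)\}_{i=0}^{n-1}$, where $w_S(i)$ is the least element of $S$ in class $i$, the first part shows $w_S(i_0)=x+n$. Since $x<x+n=w_S(i_0)$ and $x\in T$, the element $x$ becomes the least element of $T$ in class $i_0$; for every $i\neq i_0$ no new element is introduced, so the least element of $T$ in class $i$ remains $w_S(i)$. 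Collecting these, $\Ap(T,n)$ is obtained from $\Ap(S,n)$ by replacing $w_S(i_0)=x+n$ with $x$, which is precisely $\left(\Ap(S,n)\backslash\{x+n\}\right)\cup\{x\}$.

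The only point requiring care is confirming that $x$ is genuinely the smallest element of $T$ in its class, i.e.\ that $S$ contains no element of class $i_0$ below $x$; this is exactly what the identification $w_S(i_0)=x+n>x$ provides, since the Apéry element is by construction the minimum of $S$ in that class. A cardinality check ($n-1+1=n$, valid because $x\notin\Ap(S,n)$ as $x\notin S$) confirms that the displayed set has the correct size $n$ and that no class other than $i_0$ is disturbed. (Here the $x+m$ appearing in the statement should read $x+n$, the Apéry element removed being the one produced in the first part.)
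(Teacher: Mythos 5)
Your proof is correct and complete. The paper itself offers no argument for this lemma (it is dismissed with ``has an easy proof''), and what you supply is exactly the standard argument one would expect: the first claim follows directly from $x\in\PF(S)$ and the definition $\Ap(S,n)=\{s\in S\mid s-n\notin S\}$, and the second from the description of the Ap\'ery set as the least elements of the semigroup in each residue class modulo $n$, with only the class of $x$ being affected when passing from $S$ to $S\cup\{x\}$. You are also right that the $x+m$ in the statement is a typo for $x+n$; your identification $w_S(i_0)=x+n$ is precisely the point that makes the replacement argument airtight.
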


\begin{note}\label{note20} Observe that as a consequence of Lemma \ref{lemma19}, if we know $\Ap(S,n),$ then we can easily compute $\Ap(S\cup \{x\},n).$ In particuar, if $\sR$ is a ratio-covariety and $S\in \sR,$ then Lemma \ref{lemma19} allows us to compute the set $\Ap(T,n)$ from $\Ap(S,n),$  for every child $T$ of $S$ in the tree $\G(\sR)$ (see Proposition \ref{proposition7}).	
\end{note}We are going to illustrate the previous note with an example.
\begin{example}\label{example21}
	Consider the numerical semigroup $S=\{0,7,\rightarrow\}.$ We have that $\Ap(S,7)=\{0,8,9,10,11,12,13\}.$ By Example \ref{example18}, we know that $5\in \SG(S).$ If $T=S\cup \{5\},$ then  Lemma \ref{lemma19} asserts that 
	$$
	\Ap(T,7)=\left( \{0,8,9,10,11,12,13\}\backslash \{5+7\}\right)\cup \{5\}=\{0,5,8,9,10,11,13\}.
	$$
	
\end{example}

We already have all the necessary tools to present the algorithm that gives title to this section.

\begin{algorithm}\label{algorithm22}\mbox{}\par
\end{algorithm}
\noindent\textsc{Input}: Two positive integer $F$ and $m$ such that $m<F$ and $m\nmid F.$   \par
\noindent\textsc{Output}: $\sR(F,m).$

\begin{enumerate}
	\item[(1)] Compute $\Ap(\Delta(F,m),m).$ 
	\item[(2)] $\sR(F,m)=\{\Delta(F,m)\}$ and $B=\{\Delta(F,m)\}.$
	\item[(3)] For every $S\in B$ compute $\theta(S)=\{x\in \SG(S)\mid m<x<\r(S) \mbox{ and }x\neq F\}.$
	\item[(4)] If   $\displaystyle\bigcup_{S\in B}\theta(S)=\emptyset,$ then return $\sR(F,m).$ 	
	\item[(5)] $C=\displaystyle\bigcup_{S\in B}\{S\cup \{x\}\mid x\in \theta(S)\}.$
	\item[(6)]  $\sR(F,m)= \sR(F,m)\cup C$ and  $B=C.$  
	\item[(7)] For every $S\in B,$ compute $\Ap(S,m)$ and go to Step $(3).$ 	
\end{enumerate}
Next we illustrate this algorithm with an example.

\begin{example}\label{example23}
	We are going to compute $\sR(7,4)$ by using Algorithm \ref{algorithm22}.
	\begin{itemize}
		\item $\Delta(7,4)=\{0,4,8,\rightarrow\}$ and $\Ap(\Delta(7,4),4)=\{0,9,10,11\}.$
		\item $\sR(7,4)=\{\Delta(7,4)\}$ and $B=\{\Delta(7,4)\}.$
		\item $\theta(\Delta(7,4))=\{5,6\}.$
		\item $C=\left\{\Delta(7,4)\cup \{5\},\Delta(7,4)\cup \{6\} \right\}.$
		\item $\sR(7,4)=\{\Delta(7,4),\Delta(7,4)\cup \{5\},\Delta(7,4)\cup \{6\}\}$ and $B=\{\Delta(7,4)\cup \{5\},\Delta(7,4)\cup \{6\}\}.$
		\item $\Ap\left(\Delta(7,4)\cup \{5\},4 \right)=\{0,5,10,11\}$ and $\Ap\left(\Delta(7,4)\cup \{6\},4 \right)=\{0,6,9,11\}.$
		\item $\theta(\Delta(7,4)\cup \{5\} )=\emptyset$ and $\theta(\Delta(7,4)\cup \{6\})=\{5\}.$
		\item $C=\left\{\Delta(7,4)\cup \{5,6\} \right\}.$
		\item $\sR(7,4)=\{\Delta(7,4),\Delta(7,4)\cup \{5\},\Delta(7,4)\cup \{6\}, \Delta(7,4)\cup \{5,6\}\}$ and $B=\{\Delta(7,4)\cup \{5,6\}\}.$
		\item $\Ap\left(\Delta(7,4)\cup \{5,6\},4 \right)=\{0,5,6,11\}.$ 
		\item $\theta(\Delta(7,4)\cup \{5,6\})=\emptyset.$
		\item The Algorithm returns $$\sR(7,4)=\{\Delta(7,4),\Delta(7,4)\cup \{5\},\Delta(7,4)\cup \{6\}, \Delta(7,4)\cup \{5,6\}\}.$$
	\end{itemize}
	
	\end{example}

\section{The elements of $\sR(F,m)$ with a fixed genus}
Let $F,$ $m$ and $g$ be  positive integers. Denote by $\sR(F,m,g)=\{S\in \sR(F,m)\mid \g(S)=g\}.$ The following result can be deduced from \cite[Lemma 2.14]{libro}.
\begin{proposition}\label{lemma24}
	If $S$ is a numerical semigroup, then $\frac{\F(S)+1}{2}\leq \g(S)\leq \F(S).$
\end{proposition}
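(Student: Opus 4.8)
The plan is to establish the two inequalities separately, the right-hand one being immediate and the left-hand one requiring a short pairing argument.

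For the upper bound $\g(S)\leq \F(S)$, I would simply note that $0\in S$ and that every integer strictly greater than $\F(S)$ lies in $S$ by the definition of the Frobenius number. Hence the set of gaps $\N\backslash S$ is contained in $\{1,2,\dots,\F(S)\}$, a set with exactly $\F(S)$ elements, and therefore $\g(S)=\sharp(\N\backslash S)\leq \F(S)$.

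For the lower bound $\frac{\F(S)+1}{2}\leq \g(S)$, the key observation is that for each integer $x$ with $0\leq x\leq \F(S)$, at least one of the two integers $x$ and $\F(S)-x$ is a gap of $S$. Indeed, if both belonged to $S$, then their sum $\F(S)$ would belong to $S$, contradicting the fact that $\F(S)\notin S$. I would then consider the involution $x\mapsto \F(S)-x$ on the set $\{0,1,\dots,\F(S)\}$ and count its orbits: an orbit has two elements unless $x=\F(S)-x$, which can happen only when $\F(S)$ is even, in which case the single fixed point is $\F(S)/2$ (which is itself a gap, since $2\cdot \tfrac{\F(S)}{2}=\F(S)\notin S$). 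Each orbit thus contributes at least one gap, and gaps coming from distinct orbits are distinct.

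To finish, I would count the orbits according to the parity of $\F(S)$: when $\F(S)$ is odd there are exactly $\frac{\F(S)+1}{2}$ orbits, all of size two; when $\F(S)$ is even there are $\frac{\F(S)}{2}$ orbits of size two together with the fixed point, giving $\frac{\F(S)}{2}+1$ orbits. In either case the number of orbits is at least $\frac{\F(S)+1}{2}$, whence $\g(S)\geq \frac{\F(S)+1}{2}$. The only delicate point, and the one I would be careful with, is precisely this parity analysis: ensuring that the fixed point $\F(S)/2$ is correctly recognized as a gap and not double-counted. Once this is settled the remaining counting is routine.
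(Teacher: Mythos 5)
Your proof is correct, and it is essentially the standard pairing argument that underlies the result the paper cites (\cite[Lemma 2.14]{libro}): the paper gives no proof of its own, only that reference, and your involution $x\mapsto \F(S)-x$ on $\{0,1,\dots,\F(S)\}$ together with the observation that each orbit contains a gap is exactly the classical justification, with the parity/fixed-point case handled properly. The only caveat, which is a defect of the statement rather than of your argument, is the degenerate case $S=\N$ (where $\F(S)=-1$ and the upper bound reads $0\leq -1$); the proposition is implicitly meant for $S\neq\N$.
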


A numerical semigroup is {\it irreducible} if it cannot be expressed
as the intersection of two numerical semigroups properly containing
it. This notion was introduced in \cite{irreducibles}, where also the next result is proven.
\begin{lemma}
	Let $S$ be a numerical semigroup. Then $S$ is irreducible if and only if $S$ is a maximal element in the set $\{T\mid T \mbox{ is a numerical semigroup and } \F(T)=\F(S)\}.$
\end{lemma}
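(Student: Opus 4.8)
The plan is to prove the equivalence in both directions, using the characterization of irreducibility via gaps that is captured by Proposition \ref{lemma24} (the bound $\frac{\F(S)+1}{2}\leq \g(S)$).

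\medskip

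\textbf{Forward direction.} First I would assume that $S$ is irreducible and show that $S$ is maximal in the set of all numerical semigroups with Frobenius number $\F(S)$. I would argue by contraposition: suppose $S$ is \emph{not} maximal in that set, so there is a numerical semigroup $T$ with $\F(T)=\F(S)$ and $S\subsetneq T$. The key idea is that since $S\subsetneq T$, there is some gap $x$ of $S$ that lies in $T$; by choosing $x$ appropriately (for instance a special gap of $S$, which exists by Lemma \ref{lemma6}), $S\cup\{x\}$ is again a numerical semigroup with the same Frobenius number, and one can then realize $S$ as the intersection of two strictly larger semigroups with Frobenius number $\F(S)$, contradicting irreducibility. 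The cleanest route here is to use the fact that a non-maximal $S$ always has at least two distinct ways to be enlarged (two special gaps below $\F(S)$, or room to add gaps in two congruence-incompatible directions), so that $S = (S\cup\{x\}) \cap (S\cup\{y\})$ for suitable $x\neq y$; hence $S$ is reducible.

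\medskip

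\textbf{Reverse direction.} Conversely, I would assume $S$ is a maximal element among numerical semigroups with $\F(T)=\F(S)$ and show $S$ is irreducible. Again arguing by contraposition, suppose $S = S_1\cap S_2$ with $S\subsetneq S_1$ and $S\subsetneq S_2$. By Lemma \ref{lemma10}(1), $\F(S_1\cap S_2)=\max\{\F(S_1),\F(S_2)\}$, so one of $S_1,S_2$ (say $S_1$) satisfies $\F(S_1)=\F(S)$; but $S\subsetneq S_1$ then contradicts the maximality of $S$. This direction is essentially immediate from Lemma \ref{lemma10}(1), which is already available in the excerpt.

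\medskip

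\textbf{Main obstacle.} The delicate step is the forward direction: producing two distinct enlargements whose intersection is exactly $S$. The bound $\frac{\F(S)+1}{2}\leq \g(S)$ of Proposition \ref{lemma24} is what guarantees there is enough ``symmetry room'' to do this when $S$ is not maximal. Concretely, if $S$ is not maximal there is a gap $h$ of $S$ with $\F(S)-h$ also a gap of $S$ (the failure of symmetry), and adding $h$ and $\F(S)-h$ separately gives two strictly larger semigroups, each with Frobenius number $\F(S)$, whose intersection returns $S$. Verifying that both enlargements are genuine numerical semigroups with the correct Frobenius number and that their intersection is precisely $S$ is the computational heart of the argument; everything else follows formally from Lemma \ref{lemma10}.
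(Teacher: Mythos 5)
First, a point of comparison: the paper does not prove this lemma at all; it is quoted from \cite{irreducibles}, so there is no internal proof to measure your attempt against. Your reverse direction is correct and is the standard argument: if $S=S_1\cap S_2$ with both factors properly containing $S$, then Lemma \ref{lemma10} forces one of them to have Frobenius number $\F(S)$, contradicting maximality.

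The forward direction, however, has a genuine gap. Your plan is: if $S$ is not maximal, find a gap $h$ with $F-h$ also a gap (where $F=\F(S)$) and write $S=(S\cup\{h\})\cap(S\cup\{F-h\})$. The existence of such an $h$ does follow (otherwise $\g(S)=\lceil (F+1)/2\rceil$ is the minimum allowed by Proposition \ref{lemma24}, and a proper oversemigroup with the same Frobenius number would violate that bound), but the two proposed enlargements need not be numerical semigroups, and no choice of $h$ repairs this in general. Take $S=\langle 3,7,8\rangle=\{0,3,6,\rightarrow\}$, with $F=5$ and gaps $\{1,2,4,5\}$: the only pair of gaps of the form $\{h,F-h\}$ is $\{1,4\}$, and $S\cup\{1\}$ is not a numerical semigroup since $1+1\notin S\cup\{1\}$; yet $S$ is not maximal ($S\subsetneq\langle 3,4\rangle$ and $\F(\langle 3,4\rangle)=5$), so this is exactly a case your contrapositive must handle and where your construction breaks. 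The correct decomposition pairs a special gap with $F$ itself, not with $F-h$: if $S\subsetneq T$ and $\F(T)=F$, then $x=\max(T\setminus S)$ satisfies $x\in\SG(S)$ and $x\neq F$ (for $s\in S\setminus\{0\}$ one has $x+s\in T$ and $x+s>x$, so $x+s\in S$; likewise $2x\in S$). Since also $F\in\SG(S)$, Lemma \ref{lemma6} shows that $S\cup\{x\}$ and $S\cup\{F\}$ are both numerical semigroups properly containing $S$, and $(S\cup\{x\})\cap(S\cup\{F\})=S$ because $x\neq F$; hence $S$ is reducible. With that replacement your outline closes into a complete proof; as written, the ``computational heart'' you defer is not a routine verification but the precise point at which the argument fails. (A small additional slip: Lemma \ref{lemma6} characterizes special gaps but does not assert their existence.)
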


The irreducible numerical semigroups are very interesting because from \cite{barucci} and \cite{froberg} can be deduced that a numerical semigroup is irreducible if and only if it is symmetric or pseudo-symmetric.
This kind of numerical semigroup has been widely studied in the literature because one dimensional analytically irreducible  local ring is Gorenstein (respectively Kunz)
if and only if its value semigroup is symmetric (respectively pseudo-symmetric), as it can be seen in \cite{kunz} and \cite{barucci}.

If $q\in \Q,$ then $\lceil q \rceil=\min\{z\in \Z\mid q\leq z\}.$  From \cite[Corollary 4.5]{libro} we have the following characterization.

\begin{lemma}\label{lemma26}
Let $S$ be a numerical semigroup. Then the following conditions hold.
\begin{enumerate}
	\item $S$ is symmetric if and only if $\g(S)=\displaystyle \frac{\F(S)+1}{2}.$
	\item $S$ is pseudo-symmetric if and only if  $\g(S)=\displaystyle \frac{\F(S)+2}{2}.$
	\item $S$ is irreducible if and only if  $\g(S)=\displaystyle \left \lceil \frac{\F(S)+1}{2}\right\rceil .$
\end{enumerate}
\end{lemma}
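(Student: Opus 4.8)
The plan is to count gaps by pairing each integer $x$ in the interval $\{0,1,\dots,\F(S)\}$ with $\F(S)-x$ via the involution $\phi(x)=\F(S)-x$, which maps this interval bijectively onto itself. Two elementary facts drive the whole argument. First, for every $x$ we cannot have both $x\in S$ and $\F(S)-x\in S$, since otherwise $x+(\F(S)-x)=\F(S)\in S$, contradicting $\F(S)\notin S$; hence each pair $\{x,\F(S)-x\}$ contains \emph{at most} one element of $S$. Second, if $\F(S)$ is even then $\frac{\F(S)}{2}\notin S$, because $\frac{\F(S)}{2}+\frac{\F(S)}{2}=\F(S)\notin S$; thus the fixed point of $\phi$ is always a gap. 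Since all gaps of $S$ lie in $\{0,\dots,\F(S)\}$ and this interval has $\F(S)+1$ elements, the number of elements of $S$ in it is $\F(S)+1-\g(S)$.

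For part (1), I would recall that $S$ is symmetric exactly when $x\notin S\Rightarrow \F(S)-x\in S$ for all $x\in\Z$. Combined with the first fact above, this is equivalent to saying that \emph{each} pair $\{x,\F(S)-x\}$ contains exactly one element of $S$. When $\F(S)$ is odd the interval splits into $\frac{\F(S)+1}{2}$ such pairs with no fixed point, so symmetry is equivalent to the number of non-gaps being $\frac{\F(S)+1}{2}$, i.e. $\F(S)+1-\g(S)=\frac{\F(S)+1}{2}$, which rearranges to $\g(S)=\frac{\F(S)+1}{2}$. For the converse I would note that the first fact alone forces at most one non-gap per pair, so the value $\g(S)=\frac{\F(S)+1}{2}$ (which already forces $\F(S)$ odd for integrality) can be attained only when every pair is saturated, recovering symmetry.

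Part (2) runs in parallel but with the fixed point present. Here $S$ is pseudo-symmetric precisely when $\F(S)$ is even and $x\notin S\Rightarrow \F(S)-x\in S$ for all $x\neq \frac{\F(S)}{2}$. Splitting $\{0,\dots,\F(S)\}$ into the $\frac{\F(S)}{2}$ pairs $\{x,\F(S)-x\}$ with $x\neq\frac{\F(S)}{2}$ together with the fixed point, pseudo-symmetry says each pair carries exactly one non-gap while the fixed point is a gap; counting non-gaps gives $\F(S)+1-\g(S)=\frac{\F(S)}{2}$, i.e. $\g(S)=\frac{\F(S)+2}{2}$. The delicate point — and the one I expect to be the main obstacle — is the converse: the value $\g(S)=\frac{\F(S)+2}{2}$ leaves open, a priori, the possibility that the fixed point is a non-gap while one pair carries no non-gap. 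This is exactly where the \emph{second} elementary fact is essential: $\frac{\F(S)}{2}\notin S$ rules out that case, forcing every pair to be saturated and yielding pseudo-symmetry.

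Finally, for part (3) I would invoke the cited equivalence that $S$ is irreducible if and only if it is symmetric or pseudo-symmetric, together with the observation that $\frac{\F(S)+1}{2}$ is an integer iff $\F(S)$ is odd and $\frac{\F(S)+2}{2}$ is an integer iff $\F(S)$ is even. Since $\g(S)\in\Z$, parts (1) and (2) then show that for odd $\F(S)$ irreducibility is equivalent to $\g(S)=\frac{\F(S)+1}{2}=\left\lceil\frac{\F(S)+1}{2}\right\rceil$, while for even $\F(S)$ it is equivalent to $\g(S)=\frac{\F(S)+2}{2}=\left\lceil\frac{\F(S)+1}{2}\right\rceil$; in both parities this reads $\g(S)=\left\lceil\frac{\F(S)+1}{2}\right\rceil$, completing the proof.
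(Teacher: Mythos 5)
Your proof is correct. Note that the paper does not actually prove this lemma: it is quoted verbatim from \cite[Corollary 4.5]{libro}, so there is no in-paper argument to compare against. Your involution argument via $x\mapsto \F(S)-x$ on $\{0,\dots,\F(S)\}$ is the standard proof of that textbook result, and you have handled the two points where such arguments usually go wrong: (i) the integrality of $\g(S)$ forcing the parity of $\F(S)$ in each direction, and (ii) the converse of part (2), where the a priori possibility that $\frac{\F(S)}{2}\in S$ compensates for an empty pair must be excluded by the observation that $\frac{\F(S)}{2}+\frac{\F(S)}{2}=\F(S)\notin S$. Part (3) correctly combines parts (1) and (2) with the equivalence ``irreducible $\Leftrightarrow$ symmetric or pseudo-symmetric,'' which the paper itself invokes from \cite{barucci} and \cite{froberg}, and the parity case split matches the ceiling exactly. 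The only degenerate case you leave implicit is $S=\N$ (where $\F(S)=-1$ and the interval is empty), but the statement holds trivially there.
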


Denote by $\Max(\sR(F,m))$ the set formed by the maximal elements of $\sR(F,m).$ The following result is deduced from \cite[Proposition 6]{IJAC}.

\begin{proposition}\label{proposition27}With the above notation, we have the following.
	\begin{enumerate}
		\item If $F=m-1,$ then $\Max(\sR(F,m))=\sR(F,m)=\{\{0,m,\rightarrow\}\}.$
		\item If $m<F<2m,$ then $\Max(\sR(F,m))=\{\{0,m,\rightarrow\}\backslash \{F\}\}.$
		\item If $F>2m,$ then $\Max(\sR(F,m))=\{S\in \sR(F,m)\mid S \mbox{ is irreducible}\}.$
		\end{enumerate}
	
\end{proposition}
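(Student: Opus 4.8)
The plan is to treat the three regimes separately: the first two reduce to identifying an explicit maximum element, while the third rests on the lemma above characterising irreducible semigroups as the maximal elements among all numerical semigroups with a fixed Frobenius number, combined with a classical ``special gap'' enlargement argument.

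For the first two cases I would simply exhibit the maximum. If $F=m-1$, then any $S\in\sR(F,m)$ has $\{1,\dots,m-1\}\subseteq \N\backslash S$ (multiplicity $m$) and $\{m,\rightarrow\}\subseteq S$ (Frobenius number $m-1$), forcing $S=\{0,m,\rightarrow\}$; hence $\sR(F,m)$ is a singleton and equals its own set of maximal elements. If $m<F<2m$, set $M=\{0,m,\rightarrow\}\backslash\{F\}$. Since $F<2m$, every sum of two nonzero elements of $M$ is at least $2m>F$, so $M$ is a numerical semigroup, clearly with $\m(M)=m$ and $\F(M)=F$, i.e. $M\in\sR(F,m)$. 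Conversely every $S\in\sR(F,m)$ satisfies $\{1,\dots,m-1,F\}\subseteq \N\backslash S$, whence $S\subseteq M$; thus $M$ is the maximum of $\sR(F,m)$ and $\Max(\sR(F,m))=\{M\}$.

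For the third case, $F>2m$, I would prove the two inclusions. For ``$\supseteq$'': if $S\in\sR(F,m)$ is irreducible, then by the irreducibility lemma above $S$ is maximal in the larger family $\{T\mid \F(T)=F\}\supseteq \sR(F,m)$, hence a fortiori maximal in $\sR(F,m)$. The substantive inclusion is ``$\subseteq$'': I would show that a non-irreducible $S\in\sR(F,m)$ is \emph{not} maximal, by producing $h$ with $S\cup\{h\}\in\sR(F,m)$. Using that irreducible means symmetric or pseudo-symmetric, non-irreducibility forces the set $X=\{x\in\N\backslash S\mid 2x\neq F \text{ and } F-x\in\N\backslash S\}$ to be nonempty; put $h=\max X$. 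The crucial observation is that $F-h\in X$ as well, so $h\geq F-h$, giving $2h\geq F$ and, since $2h\neq F$, in fact $2h>F$.

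From $2h>F$ I would derive everything needed. First $2h\in S$, because $2h>F$. Next, if $h+s\notin S$ for some $s\in S\backslash\{0\}$, then $h+s<F$ (otherwise $h+s\in S$, or $s=F-h\in S$, both impossible), while $F-(h+s)=(F-h)-s\notin S$ (else $F-h\in S$) and $2(h+s)>2h>F$; hence $h+s\in X$ with $h+s>h$, contradicting the maximality of $h$. Thus $h\in\PF(S)$ and $2h\in S$, so $h\in\SG(S)$ and $S\cup\{h\}$ is a numerical semigroup by Lemma \ref{lemma6}. Since $h\neq F$ and $h<F$, the Frobenius number is preserved, and since $2h>F>2m$ forces $h>m$, the multiplicity is preserved; therefore $S\cup\{h\}\in\sR(F,m)$ properly contains $S$, contradicting maximality. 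The step I expect to be the main obstacle is precisely verifying that $h=\max X$ is a special gap; note that the hypothesis $F>2m$ enters exactly here, as it is what guarantees $h>m$ and so keeps the enlarged semigroup inside $\sR(F,m)$.
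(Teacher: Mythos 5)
Your argument is correct, but it is genuinely different from what the paper does: the paper gives no proof at all for this proposition, deferring entirely to Proposition~6 of the cited work of Blanco and Rosales (reference \cite{IJAC}), whereas you reconstruct a self-contained proof. Your cases $F=m-1$ and $m<F<2m$ are exactly the right elementary computations, and your case $F>2m$ is the classical ``enlarge at the largest non-self-paired gap'' argument: the verification that $h=\max X$ lies in $\PF(S)$ with $2h\in S$, hence in $\SG(S)$, is sound, and you correctly isolate where $F>2\m(S)$ is needed, namely to get $h>m$ from $2h>F$ so that the enlargement stays inside $\sR(F,m)$ rather than merely inside the family of semigroups with Frobenius number $F$. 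The one step that leans on material the paper never formally states is the claim that non-irreducibility forces $X=\{x\in\N\backslash S\mid 2x\neq F \mbox{ and } F-x\in\N\backslash S\}\neq\emptyset$; you justify it via the definitions of symmetric and pseudo-symmetric, which the paper only alludes to. This can be closed using only the paper's own Lemma~\ref{lemma26}: if $X=\emptyset$, then the gaps of $S$ other than (possibly) $F/2$ pair off as $\{x,F-x\}$ with exactly one member of each pair a gap, which forces $\g(S)=\left\lceil \frac{F+1}{2}\right\rceil$ and hence $S$ irreducible. With that remark your proof is complete and has the advantage of making the paper self-contained; the paper's citation-based route is shorter but opaque.
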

As a consequence from Lemma \ref{lemma26} and Proposition \ref{proposition27}, we have the following result.
\begin{corollary}\label{corollary28}
	Let $S\in \sR(F,m).$ Then $S\in \Max(\sR(F,m))$ if and only if one of the following conditions are verified.
	\begin{enumerate}
		\item $F=m-1$ and $\g(S)=m-1.$
		\item $m<F<2m$ and $\g(S)=m.$
		\item $F>2m$ and $\g(S)=\left \lceil \displaystyle \frac{F+1}{2}  \right \rceil.$
	\end{enumerate}
	
\end{corollary}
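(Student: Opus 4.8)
The plan is to prove Corollary~\ref{corollary28} by directly combining the structural description of maximal elements in Proposition~\ref{proposition27} with the genus characterizations in Lemma~\ref{lemma26}, treating each of the three cases separately according to the relationship between $F$ and $m$. The strategy is to show that in each case the membership $S \in \Max(\sR(F,m))$ is equivalent to the stated genus condition, using the fact that all semigroups under consideration already lie in $\sR(F,m)$ and hence share the common Frobenius number $F$ and multiplicity $m$.

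First I would dispose of the two degenerate cases. In the case $F=m-1$, Proposition~\ref{proposition27}(1) tells us that $\sR(F,m)=\{\{0,m,\rightarrow\}\}$ is a singleton, so every element is maximal; it then only remains to compute that the unique semigroup $\{0,m,\rightarrow\}$ has genus $m-1$ (its gaps are exactly $1,2,\dots,m-1$), which matches condition~(1). Similarly, in the case $m<F<2m$, Proposition~\ref{proposition27}(2) gives $\Max(\sR(F,m))=\{\{0,m,\rightarrow\}\backslash\{F\}\}$, again a singleton, and one checks that this semigroup has gaps $\{1,\dots,m-1\}\cup\{F\}$, so its genus is $m$, matching condition~(2). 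In both situations the equivalence is immediate once these genus computations are recorded, since the maximal set is forced to be a single explicitly known semigroup.

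The substantive case is $F>2m$. Here Proposition~\ref{proposition27}(3) states that $S\in\Max(\sR(F,m))$ if and only if $S$ is irreducible. Since $S\in\sR(F,m)$ means $\F(S)=F$, I would simply invoke Lemma~\ref{lemma26}(3), which asserts that a numerical semigroup is irreducible precisely when $\g(S)=\left\lceil\frac{\F(S)+1}{2}\right\rceil$. Substituting $\F(S)=F$ yields the condition $\g(S)=\left\lceil\frac{F+1}{2}\right\rceil$ of part~(3), completing the equivalence by transitivity. This step is essentially a one-line chaining of two cited results, with the only care needed being to confirm that irreducibility is an absolute property of $S$ (hence computable from $\g(S)$ and $\F(S)=F$) rather than one relative to the family $\sR(F,m)$.

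I expect the main obstacle to be merely bookkeeping rather than any genuine difficulty: verifying the explicit genus values in the first two cases and making sure the logical equivalences are stated symmetrically in both directions. There is a subtle point worth guarding against, namely that Proposition~\ref{proposition27}(3) identifies maximal elements of $\sR(F,m)$ with semigroups that are irreducible in the \emph{global} sense (maximal among all numerical semigroups with Frobenius number $F$), and one should confirm that such a semigroup indeed has multiplicity $m$ so that it genuinely lies in $\sR(F,m)$; this is guaranteed precisely because Proposition~\ref{proposition27} already characterizes $\Max(\sR(F,m))$ as a subset of $\sR(F,m)$. Once this is observed, no further computation beyond the two elementary genus evaluations is required.
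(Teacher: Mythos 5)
Your proposal is correct and follows exactly the route the paper intends: Corollary~\ref{corollary28} is stated there as a direct consequence of Proposition~\ref{proposition27} and Lemma~\ref{lemma26}, with the first two cases reducing to the explicit genus computations for the singleton maximal sets and the third case being the chaining of maximality-equals-irreducibility with the genus characterization of irreducibility.
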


If $S$ is a numerical semigroup such that $S\neq \N,$ then the {\it ratio-sequence associated} to $S$ is recurrentely defined as: $S_0=S$ and $S_{n+1}=S_n\backslash \{\r(S_n)\}$ for all $n\in \N.$

If $S$ is a numerical semigroup, then we denote by $\A(S)=\{x\in S\mid x<\F(S) \mbox{ and }\m(S)\nmid x\}.$ The cardinality of $\A(S)$ will be denoted by $\a(S).$

Let $S$ be a numerical semigroup and  let $\{S_n\}_{n\in \N}$ be the  ratio-sequence associated to $S$, then the set  $\Rat$-$\Cad(S)=\{S_0,S_1,\dots, S_{\a(S)}\}$ is called the {\it  ratio-chain associated} to $S.$ It is clear that 
 $S_{\a(S)}=\Delta(\F(S), \m(S)).$

 If $q\in \Q,$ then $\lfloor q \rfloor=\max\{z\in \Z\mid z\leq q\}.$
 \begin{lemma}\label{lemma29}
 	With the above notation, it is verified that $\g(\Delta(F,m))=F-\left \lfloor \displaystyle \frac{F}{m}\right \rfloor.$
 \end{lemma}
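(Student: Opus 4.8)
The plan is to count the gaps of $\Delta(F,m)$ directly from its description $\Delta(F,m)=\langle m \rangle \cup \{F+1,\rightarrow\}$. First I would identify exactly which nonnegative integers fail to lie in $\Delta(F,m)$. An integer $x$ belongs to $\Delta(F,m)$ precisely when $m\mid x$ or $x\geq F+1$. Negating this, the set of gaps is
$$
\N\backslash \Delta(F,m)=\{x\in \N \mid 0\leq x\leq F \mbox{ and } m\nmid x\}.
$$
This is the key structural observation, and it reduces the computation of $\g(\Delta(F,m))=\sharp(\N\backslash \Delta(F,m))$ to a counting problem over the finite interval $\{0,1,\dots,F\}$.

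Next I would count by complementation within $\{0,1,\dots,F\}$. This interval contains $F+1$ integers. Among them, the multiples of $m$ are exactly $0,m,2m,\dots,\lfloor F/m\rfloor\, m$, which is a list of $\lfloor F/m\rfloor+1$ integers (the value $\lfloor F/m\rfloor$ being the largest $k$ with $km\leq F$). Subtracting, the number of non-multiples of $m$ in the interval is
$$
(F+1)-\left(\left\lfloor \frac{F}{m}\right\rfloor+1\right)=F-\left\lfloor \frac{F}{m}\right\rfloor.
$$
Since these non-multiples are precisely the gaps identified above, this yields $\g(\Delta(F,m))=F-\lfloor F/m\rfloor$, as claimed.

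There is no genuine obstacle here; the argument is elementary once the gap set is correctly identified. The only point requiring minor care is the bookkeeping of the multiple $0$, which must be included both as an element of the interval and as a multiple of $m$ so that the two counts are consistent; getting this off-by-one right is what makes the constant terms cancel cleanly. The hypothesis $m\nmid F$ from the section's standing assumptions is not even needed for this particular identity, so the formula holds verbatim.
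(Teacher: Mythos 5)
Your proof is correct and follows essentially the same route as the paper, which simply observes that $\Delta(F,m)=\{0,m,2m,\dots,\left\lfloor F/m\right\rfloor m, F+1,\rightarrow\}$ and leaves the count of the complement implicit. You have merely spelled out the counting of non-multiples of $m$ in $\{0,1,\dots,F\}$ that the paper takes for granted.
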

\begin{proof}
	It is enough to observe that $$\Delta(F,m)=\{0,m,2m,\cdots, \left \lfloor\displaystyle \frac{F}{m}\right \rfloor m, F+1,\rightarrow \}.$$
\end{proof}

The following result has an easy proof.

\begin{lemma}\label{lemma30}
	If $m<F<2m,$ then $$\sR(F,m)=\{A\cup \{0,m,F+1,\rightarrow\}\mid A\subseteq \{m+1,\cdots, F-1\}\}.$$
	
\end{lemma}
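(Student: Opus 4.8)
The plan is to prove a set equality by establishing both inclusions, working under the standing hypothesis $m<F<2m$ (together with $m \nmid F$, which is the standing assumption of the section). The key structural observation to exploit is that when $F<2m$, every integer $x$ strictly between $m$ and $F$ is automatically \emph{not} a multiple of $m$: indeed the smallest multiple of $m$ exceeding $m$ is $2m$, and $2m>F>x$. This means no arithmetic interaction between such elements can produce a new multiple of $m$ below $2m$, which is what keeps the candidate sets closed under addition in the relevant range.

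First I would verify the right-to-left inclusion. Fix $A\subseteq\{m+1,\dots,F-1\}$ and set $S=A\cup\{0,m,F+1,\rightarrow\}$. I must check three things: that $S$ is a numerical semigroup, that $\m(S)=m$, and that $\F(S)=F$. The multiplicity and Frobenius-number claims are immediate from the explicit form of $S$, since $0$ and $m$ are its two smallest elements ($A$ contains nothing below $m+1$) and $F\notin S$ while everything above $F$ lies in $S$. For the semigroup property I would check closure under addition: the sum of two elements each at least $m$ is at least $2m>F$, hence lies in the tail $\{F+1,\rightarrow\}\subseteq S$; adding $0$ or $m$ to an element is handled similarly (note $m+a\geq 2m>F$ for $a\in A\cup\{m\}$, and $m+m=2m$ is in the tail). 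Thus $S$ is closed, contains $0$, and has finite complement, so $S\in\sR(F,m)$.

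Next I would prove the left-to-right inclusion. Let $S\in\sR(F,m)$, so $\m(S)=m$ and $\F(S)=F$. Since $m$ is the multiplicity, $S$ contains no positive integer below $m$, and it contains $m$ itself. Since $F=\F(S)$, every integer greater than $F$ belongs to $S$ and $F\notin S$. Setting $A=S\cap\{m+1,\dots,F-1\}$, I claim $S=A\cup\{0,m,F+1,\rightarrow\}$: the right-hand side collects exactly the elements of $S$ below $m$ (only $0$), equal to $m$, strictly between $m$ and $F$ (these are $A$), and above $F$ (all of them), and $F$ itself is excluded on both sides. Because $2m>F$, the only multiple of $m$ that could lie in the range $[0,F]$ other than $0$ and $m$ would be $2m$, which exceeds $F$; hence every element of $A$ is a genuine non-multiple in the open interval, matching the index set $\{m+1,\dots,F-1\}$ exactly, and $A\subseteq\{m+1,\dots,F-1\}$ as required.

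I do not anticipate a serious obstacle here; the result is elementary, as the paper itself signals by calling the proof ``easy.'' The one point requiring a moment's care is confirming that the decomposition in the forward direction is an honest partition of $S$ by residue-free intervals and that no spurious multiplicity or Frobenius constraint is violated — but the inequality $2m>F$ makes all of this transparent, since it removes any possible multiple of $m$ from the interior of the interval $(m,F)$. The whole argument therefore reduces to bookkeeping the four ranges $\{0\}$, $\{m\}$, $(m,F)$, and $(F,\rightarrow)$ against the defining conditions, which is routine.
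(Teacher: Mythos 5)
Your proof is correct and is essentially the intended argument: the paper omits the proof entirely (labelling it ``easy''), and the double-inclusion bookkeeping you carry out, hinging on the observation that any sum of two nonzero elements is at least $2m>F$ and hence falls into the tail $\{F+1,\rightarrow\}$, is exactly the standard verification. No gaps.
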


By applying the previous results, we can easily deduced the following proposition. 
\begin{proposition}\label{proposition31}
	With the above notation, the following holds:
	\begin{enumerate}
		\item If $F=m-1,$ then $\{\g(S)\mid S\in \sR(F,m)\}=\{m-1\}.$
		\item If $m<F<2m,$ then $\{\g(S)\mid S\in \sR(F,m)\}=\{m, m+1,\cdots,F-1\}.$
		\item If $F>2m,$ then $\{\g(S)\mid S\in \sR(F,m)\}=\left\{ \left \lceil \displaystyle  \frac{F+1}{2} \right \rceil , \cdots, F-\left\lfloor\displaystyle \frac{F}{m} \right \rfloor \right\}.$
	\end{enumerate}
	
\end{proposition}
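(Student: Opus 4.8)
The plan is to establish each of the three cases of Proposition~\ref{proposition31} by computing the range of the genus function over $\sR(F,m)$, using the already-established description of the maximal elements together with the fact that $\sR(F,m)$ forms a tree under the ratio-operation. The unifying idea is that the genus drops by exactly one each time we pass from a semigroup to its parent $S\backslash\{\r(S)\}$ in the tree $\G(\sR(F,m))$, since $\sharp(S_i\backslash S_{i+1})=1$ by Lemma~\ref{lemma3}. Hence $\g$ takes all integer values between its minimum (attained at $\min(\sR(F,m))=\Delta(F,m)$) and its maximum (attained at a maximal element); the set $\{\g(S)\mid S\in\sR(F,m)\}$ is therefore a full interval of integers, and it suffices to pin down its two endpoints.

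First I would dispose of case (1). If $F=m-1$, then by Proposition~\ref{proposition27}(1) we have $\sR(F,m)=\{\{0,m,\rightarrow\}\}$, a single semigroup whose genus is $m-1$ (its gaps are $1,2,\dots,m-1$), giving the singleton $\{m-1\}$ immediately. Next, for case (2) with $m<F<2m$, I would invoke Lemma~\ref{lemma30}, which displays every element of $\sR(F,m)$ as $A\cup\{0,m,F+1,\rightarrow\}$ with $A\subseteq\{m+1,\dots,F-1\}$. The gaps of such a semigroup are precisely $\{1,\dots,F\}\setminus(\{m\}\cup A)$, so its genus is $(F-1)-\sharp A$. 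As $A$ ranges over all subsets of the $(F-m-1)$-element set $\{m+1,\dots,F-1\}$, the cardinality $\sharp A$ ranges over $\{0,1,\dots,F-m-1\}$, and thus $\g$ ranges over $\{m,m+1,\dots,F-1\}$, establishing case (2).

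For case (3), with $F>2m$, the lower endpoint comes from $\min(\sR(F,m))=\Delta(F,m)$: by Lemma~\ref{lemma29} its genus is $F-\lfloor F/m\rfloor$, and since the genus only grows as we move away from the root (each child has genus one more than its parent), this is the \emph{maximum} value of $\g$ on $\sR(F,m)$. Wait---I should be careful about orientation here: moving from $S$ to its parent $S\backslash\{\r(S)\}$ removes one element and hence increases the genus by one, so in fact $\Delta(F,m)$, being the root toward which every chain descends, has the \emph{largest} genus. The minimum of $\g$ is then attained at the maximal elements of the tree, which by Corollary~\ref{corollary28}(3) are exactly the irreducible semigroups in $\sR(F,m)$, all having genus $\lceil(F+1)/2\rceil$. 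Combining the interval-filling observation with these two endpoints yields $\{\g(S)\mid S\in\sR(F,m)\}=\{\lceil(F+1)/2\rceil,\dots,F-\lfloor F/m\rfloor\}$.

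The main obstacle is justifying the interval-filling claim rigorously in case (3): one must verify both that $\Delta(F,m)$ realizes the maximal genus and that \emph{some} maximal element realizing the minimal genus $\lceil(F+1)/2\rceil$ actually exists, i.e.\ that the set of irreducible semigroups with Frobenius number $F$ and multiplicity $m$ is nonempty when $F>2m$; the existence of at least one such element is what guarantees the lower endpoint is attained rather than being merely a bound from Proposition~\ref{lemma24}. Once the endpoints are confirmed to be attained, the surjectivity of $\g$ onto the full integer interval follows formally by walking down any chain from a minimal-genus maximal element to the root, since each step changes the genus by exactly one.
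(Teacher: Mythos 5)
Your argument is correct and is essentially the fleshed-out version of what the paper intends (the paper only says the proposition ``can easily be deduced'' from the preceding results): case (1) from Proposition~\ref{proposition27}, case (2) from Lemma~\ref{lemma30}, and case (3) by pinning the endpoints at $\g(\Delta(F,m))=F-\lfloor F/m\rfloor$ (Lemma~\ref{lemma29}) and $\lceil (F+1)/2\rceil$ (Corollary~\ref{corollary28} plus nonemptiness of $\Max(\sR(F,m))$), with the intermediate values supplied by the ratio-chain, along which the genus changes by exactly one at each step. Your self-correction on the orientation of the tree (the root $\Delta(F,m)$ has the \emph{largest} genus) and your remark that the lower endpoint is attained because a finite nonempty family always has maximal elements are both accurate, so no gap remains.
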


We have now all the ingredients needed to give an algorithmic procedure to compute $\sR(F,m,g),$ being $F>2m$, $m\nmid F$ and $\displaystyle \left \lceil\frac{F+1}{2} \right \rceil \leq g \leq F-\left\lfloor\frac{F}{m}\right \rfloor.$

\begin{algorithm}\label{algorithm32}	
\end{algorithm}

\noindent\textsc{Input}: Positive integer $F$, $m$ and $g$ such that $F>2m$, $m\nmid F$ and $\displaystyle \left \lceil \frac{F+1}{2}\right \rceil\leq g \leq F-\left\lfloor\frac{F}{m}\right \rfloor.$  \par
\noindent\textsc{Output}: $\sR(F,m,g).$

\begin{enumerate}
	\item[(1)] $H=\{\Delta(F,m)\},$ $i=F-\left\lfloor\frac{F}{m}\right \rfloor.$ 
	\item[(2)] If $i=g$, return $H.$
	\item[(3)] For every $S\in H$ compute $\theta(S)=\{x\in \SG(S)\mid m<x<\r(S) \mbox{ and }x\neq F\}.$	
	\item[(5)] $H:=\displaystyle\bigcup_{S\in H}\{S\cup \{x\}\mid x\in \theta(S)\},$ $i:=i-1$ and go to Step(2).
\end{enumerate}
We are going to see how the previous algorithm works in the following example. 
\begin{example}\label{example33}
By using Algorithm \ref{algorithm32}, we will compute the set  $\sR(12,5,8).$
\begin{itemize}
	\item $H=\{\Delta(12,5)\},$ $i=10.$
	\item $\theta(\Delta(12,5))=\{8,9,11\}.$
	\item $H=\{\Delta(12,5)\cup \{8\}, \Delta(12,5)\cup \{9\},\Delta(12,5)\cup \{11\}\},$ $i=9.$
	\item $\theta(\Delta(12,5)\cup \{8\})=\emptyset,$ $\theta(\Delta(12,5)\cup \{9\})=\{8\}$ and $\theta(\Delta(12,5)\cup \{11\})=\{8,9\}.$
	\item $H=\{\Delta(12,5)\cup \{8,9\}, \Delta(12,5)\cup \{8,11\},\Delta(12,5)\cup \{9,11\}\},$ $i=8.$
	\item The algorithm returns $$\sR(12,5,8)=\{\Delta(12,5)\cup \{8,9\}, \Delta(12,5)\cup \{8,11\},\Delta(12,5)\cup \{9,11\}\}.$$	
\end{itemize}	
\end{example}

We finish this section, noting that in \cite{portuguesa} an equivalence relation $\sim$ is defined over  $\sR(F,m)$ such that the set  $\displaystyle \frac{\sR(F,m)}{\sim}=\{[S]\mid S\in \Max(\sR(F,m))\}$ is the quotient set of $\sR(F,m)$ by $\sim.$
 As a consequence, in \cite{portuguesa} an algorithm is obtained to compute $\sR(F,m)$ based on two algorithms:
 \begin{enumerate}
 	\item An algorithm which computes $\Max(\sR(F,m)).$
 	\item Another algorithm which computes $[S]$ when  $S$ belongs to $\Max(\sR(F,m)).$
 \end{enumerate}

\section{$\sR$-system of generators}
Throughout this section $\sR$ will denote a ratio-covariety. We will say that a set $X$ is an $\sR$-{\it set}, if it verifies the following conditions:
\begin{enumerate}
	\item[1)]$X\cap \min(\sR)=\emptyset.$
	\item[2)]There is $S\in \sR$ such that $X\subseteq S.$	
\end{enumerate} 

If $X$ is an $\sR$-set, then we will denote by $\sR[X]$  the intersection of all elements of $\sR$ containing $X.$ By Proposition \ref{proposition2}, we know that $\sR$ is a finite set and so, the intersection of elements of $\sR$ is an element of $\sR.$ Therefore, we can ennounce the following proposition.
\begin{proposition}\label{proposition34}
	Let $X$ be an $\sR$-set. Then $\sR[X]$ is the smallest element of $\sR$ containing $X.$
\end{proposition}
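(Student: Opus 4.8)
The plan is to show that $\sR[X]$, defined as the intersection of all elements of $\sR$ containing $X$, is itself an element of $\sR$ and that it is contained in every element of $\sR$ containing $X$; the combination of these two facts gives exactly the claim that $\sR[X]$ is the smallest element of $\sR$ containing $X$. The key structural input is that a ratio-covariety is closed under pairwise intersection (condition 2 in the definition), together with Proposition~\ref{proposition2}, which guarantees that $\sR$ is a finite family.

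First I would observe that the hypothesis that $X$ is an $\sR$-set means precisely that the collection $\CF=\{S\in\sR\mid X\subseteq S\}$ is nonempty, since condition 2) in the definition of $\sR$-set supplies at least one $S\in\sR$ with $X\subseteq S$. Thus $\sR[X]=\bigcap_{S\in\CF}S$ is an intersection of a nonempty, finite subcollection of numerical semigroups. Each such $S$ is a numerical semigroup, and by Lemma~\ref{lemma10}(1) the intersection of two numerical semigroups is again a numerical semigroup; iterating over the finitely many members of $\CF$ shows the intersection is a numerical semigroup. More to the point, condition 2) of the ratio-covariety definition states $S\cap T\in\sR$ whenever $\{S,T\}\subseteq\sR$, so by a straightforward induction on $\sharp\CF$ (finite by Proposition~\ref{proposition2}) the full intersection $\sR[X]=\bigcap_{S\in\CF}S$ belongs to $\sR$.

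Next I would check the two defining properties of ``smallest element containing $X$''. Containment of $X$ is immediate: since $X\subseteq S$ for every $S\in\CF$, we have $X\subseteq\bigcap_{S\in\CF}S=\sR[X]$. Minimality is also immediate from the construction: if $T\in\sR$ is any element with $X\subseteq T$, then $T\in\CF$ by definition of $\CF$, and hence $\sR[X]=\bigcap_{S\in\CF}S\subseteq T$. Therefore $\sR[X]$ is an element of $\sR$, it contains $X$, and it is contained in every element of $\sR$ containing $X$, which is exactly the assertion.

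The only genuinely nontrivial point — and the one I would flag as the main obstacle — is verifying that the intersection lands back inside $\sR$ rather than merely being a numerical semigroup. This is where the finiteness from Proposition~\ref{proposition2} is essential: condition 2) of the definition only guarantees closure under \emph{pairwise} intersection, so an infinite family could in principle fail to contain its total intersection, but finiteness reduces the arbitrary intersection to finitely many pairwise intersections via induction. I expect the remainder of the argument to be routine set-theoretic bookkeeping, and indeed the excerpt's remark preceding the statement already anticipates this, noting that by Proposition~\ref{proposition2} the intersection of elements of $\sR$ is an element of $\sR$.
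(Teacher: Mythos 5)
Your proof is correct and follows the same route the paper takes: the paper's justification is precisely the remark preceding the statement, namely that finiteness of $\sR$ (Proposition~\ref{proposition2}) together with closure under pairwise intersection makes the intersection of all elements of $\sR$ containing $X$ land in $\sR$, after which containment and minimality are immediate. You have merely spelled out the induction and the nonemptiness of the family, which the paper leaves implicit.
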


If $X$ is an $\sR$-set and $S=\sR[X],$ we will say that $X$ is an $\sR$-{\it system of generators} of $S.$ Moreover, if $S\neq \sR[Y]$ for all $Y\subsetneq X,$ then $X$ will be called  a {\it minimal} $\sR$-{\it system of generators} of $S.$

In general, the minimal  $\sR$-system of generators is not unique as we can see in the following example.  
\begin{example}\label{example35}
	Let $\sR=\{S\mid S \mbox{ is a numerical semigroup, } \m(S)=8,\, \r(S)\ge 10 \mbox{ and }\F(S)\leq 15 \}\cup \{S_1=\{0,8,9,10,11,15,\rightarrow\}, S_2=\{0,8,9,13,15,\rightarrow\}, S_3=\{0,8,9,15,\rightarrow\}\}.$
	
	It is easy to see that $\sR$ is a ratio-covariety, $\sR([\{9,10\}])=\sR([\{9,11\}])=S_1,$ $\sR([\{9\}])=S_3,$ $\sR([\{10\}])=\{0,8,10,15,\rightarrow\}$ and $\sR([\{11\}])=\{0,8,11,15,\rightarrow\}.$
	Therefore, $\{9,10\}$ and  $\{9,11\}$ are minimal $\sR$-system of generators of $S_1.$ 	
\end{example}

Our next aim in this section will be to prove that every element of $\sR(F,m)$ admits a unique minimal $\sR(F,m)$-system of generators. 

\begin{lemma}\label{lemma36} If $S\in \sR,$ then $X=\{x\in \msg(S)\mid x\notin \min(\sR)\}$ is an $\sR$-set and $\sR[X]=S.$
\end{lemma}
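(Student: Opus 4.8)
The plan is to verify the two assertions separately, and the whole argument rests on the fact that, since $\min(\sR)$ is the \emph{minimum} of $\sR$ under inclusion, it must be contained in every element of $\sR$. First I would check that $X$ is an $\sR$-set. The condition $X\cap\min(\sR)=\emptyset$ is immediate, because by construction $X$ consists precisely of those elements of $\msg(S)$ that do not lie in $\min(\sR)$. For the second condition, note that $X\subseteq\msg(S)\subseteq S$ with $S\in\sR$, so there is indeed an element of $\sR$ containing $X$. Hence $X$ is an $\sR$-set and $\sR[X]$ is well defined by Proposition \ref{proposition34}.

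Next I would prove $\sR[X]=S$ by a double inclusion. Write $T=\sR[X]$. Since $X\subseteq S$ and $S\in\sR$, Proposition \ref{proposition34} (which identifies $\sR[X]$ as the smallest element of $\sR$ containing $X$) gives $T\subseteq S$. For the reverse inclusion, the key observation is that $T\in\sR$ forces $\min(\sR)\subseteq T$. Now decompose the minimal generating set as $\msg(S)=X\cup\left(\msg(S)\cap\min(\sR)\right)$: every $x\in\msg(S)$ either fails to lie in $\min(\sR)$, in which case $x\in X\subseteq T$, or lies in $\min(\sR)\subseteq T$. In either case $x\in T$, so $\msg(S)\subseteq T$. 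Because $T$ is a numerical semigroup, hence closed under addition, and $S=\langle\msg(S)\rangle$, this yields $S\subseteq T$. Combining the two inclusions gives $S=T=\sR[X]$.

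I do not expect any serious obstacle here; the statement is essentially a bookkeeping consequence of the definitions. The only point that requires care is the interpretation of $\min(\sR)$ as the inclusion-minimum, so that $\min(\sR)\subseteq T$ for every $T\in\sR$ — this is exactly what lets the generators of $S$ that lie in $\min(\sR)$ be recovered inside $\sR[X]$ for free, even though they were deliberately excluded from $X$. Once that is in place, the argument reduces to the one-line decomposition of $\msg(S)$ above.
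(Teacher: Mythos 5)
Your proof is correct and follows essentially the same route as the paper: both establish $\sR[X]\subseteq S$ from $X\subseteq S\in\sR$, and recover $S\subseteq\sR[X]$ by observing that any $T\in\sR$ containing $X$ also contains $\min(\sR)$, hence all of $\msg(S)$, hence $S$. The only cosmetic difference is that you apply this to $T=\sR[X]$ directly while the paper argues for an arbitrary $T\in\sR$ containing $X$; the content is identical.
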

\begin{proof}
	It is clear that $X$ is an $\sR$-set. As $S\in \sR$ and $X\subseteq S,$ then $\sR[X]\subseteq S.$ We are going to see the reverse inclusion. Let $T\in \sR$ such that $X\subseteq T.$ Then $X\cup \min(\sR)\subseteq T$ and so $\msg(S)\subseteq T.$ Hence, $S\subseteq T$ and consequently, $S\subseteq \sR[X].$
\end{proof}
\begin{proposition}\label{proposition37}
If $S\in \sR(F,m),$ then $X=\{x\in \msg(S)\mid x\notin \Delta(F,m)\}$ is the unique minimal $\sR$-system of generators of $S.$
\end{proposition}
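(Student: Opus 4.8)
The plan is to establish two things: first, that the given set $X=\{x\in \msg(S)\mid x\notin \Delta(F,m)\}$ is an $\sR$-system of generators of $S$, and second, that it is the \emph{unique} minimal one. The first part is essentially immediate from Lemma \ref{lemma36}, since $\Delta(F,m)=\min(\sR(F,m))$ by Lemma \ref{lemma9}; applying Lemma \ref{lemma36} with $\sR=\sR(F,m)$ gives that $X$ is an $\sR(F,m)$-set and $\sR(F,m)[X]=S$. So the real content lies in proving uniqueness, which will in turn show that $X$ itself is minimal (not merely a generating set).

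The key idea for uniqueness is to show that \emph{every} $\sR(F,m)$-system of generators $Y$ of $S$ must contain $X$. Granting this, if $Y$ is any minimal $\sR(F,m)$-system of generators, then $X\subseteq Y$; but $X$ already generates $S$ (i.e. $\sR(F,m)[X]=S$), so minimality of $Y$ forces $Y=X$. Hence $X$ is the unique minimal system. So the whole proof reduces to the containment claim: if $Y$ is an $\sR(F,m)$-set with $\sR(F,m)[Y]=S$, then $X\subseteq Y$.

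To prove this containment, I would argue by contradiction at the level of a single element. Fix $x\in X$, so $x\in\msg(S)$ and $x\notin\Delta(F,m)$, and suppose $x\notin Y$. The goal is to produce an element $T\in\sR(F,m)$ with $Y\subseteq T$ but $x\notin T$, which would contradict $S=\sR(F,m)[Y]\subseteq T$ (since $x\in S$). The natural candidate is $T=S\backslash\{x\}$. By Lemma \ref{lemma10}(2), since $x\in\msg(S)$, the set $S\backslash\{x\}$ is a numerical semigroup. The main obstacle — and the step I expect to require the most care — is verifying that $T=S\backslash\{x\}$ actually lies in $\sR(F,m)$, i.e. that removing $x$ changes neither the Frobenius number nor the multiplicity. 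This is where the hypothesis $x\notin\Delta(F,m)$ is crucial: because $\Delta(F,m)=\langle m\rangle\cup\{F+1,\rightarrow\}$, the condition $x\notin\Delta(F,m)$ means $x$ is neither a multiple of $m$ (so $x\neq m$ and removing it preserves the multiplicity $m$) nor larger than $F$ (so $x<F$, and removing it preserves $\F(T)=F$; one also checks $x\neq F$ since $F\notin S$).

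Once $T=S\backslash\{x\}\in\sR(F,m)$ is established, the contradiction is quick: $Y\subseteq S$ and $x\notin Y$ give $Y\subseteq S\backslash\{x\}=T$, whence $S=\sR(F,m)[Y]\subseteq T=S\backslash\{x\}$, contradicting $x\in S$. This forces $x\in Y$, and since $x\in X$ was arbitrary, $X\subseteq Y$. Combined with the first paragraph this completes the proof. I would also remark that this argument shows $X$ is contained in every generating set, which is precisely the statement that $S$ admits a unique minimal $\sR(F,m)$-system of generators, mirroring the role played by $\msg(S)$ for ordinary generation via Lemma \ref{lemma10}(2).
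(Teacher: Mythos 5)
Your proposal is correct and follows essentially the same route as the paper: apply Lemma \ref{lemma36} to get $\sR(F,m)[X]=S$, then show every generating $\sR(F,m)$-set $Y$ contains $X$ by noting that a missing $x\in X\backslash Y$ would give $Y\subseteq S\backslash\{x\}\in\sR(F,m)$ and hence the absurdity $S\subseteq S\backslash\{x\}$. Your extra verification that $S\backslash\{x\}$ keeps multiplicity $m$ and Frobenius number $F$ (using $x\notin\langle m\rangle$ and $x<F$) is exactly the detail the paper leaves implicit in its appeal to Lemma \ref{lemma10}.
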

\begin{proof}
	By Lemma \ref{lemma36}, we know that $X$ is a $\sR(F,m)$-set and $\sR(F,m)[X]=S.$ To conclude the proof, we will see that if $Y$ is a $\sR(F,m)$-set and  $\sR(F,m)[Y]=S,$ then $X\subseteq Y.$ In fact, if $X\nsubseteq Y,$ then there exists $x\in X\backslash Y.$ By applying Lemma \ref{lemma10}, we deduce that $S\backslash \{x\}\in \sR(F,m)$ and $Y\subseteq S\backslash \{x\}.$ Therefore, $S=\sR(F,m)[Y]\subseteq S\backslash \{x\}$, which is absurd.
\end{proof}
If $\sR$ is a ratio-covariety and $S\in \sR,$ then we define the $\sR$-{\it rank} of $S$ as 
$$
\sR\rank(S)=\min\{\sharp X\mid X \mbox{ is a }\sR\mbox{-set and }\sR[X]=S\}.
$$

As an immediate consequence of Lemma \ref{lemma36}, in the following result  we show the relation between the $\sR$-rank and the embedding dimension of a numerical semigroup. 

\begin{proposition}\label{proposition38}
	If $\sR$ is a ratio-covariety and $S\in \sR,$ then $\sR\rank(S)\leq \e(S).$
\end{proposition}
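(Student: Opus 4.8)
The plan is to bound the $\sR$-rank of $S$ directly via the embedding dimension by exhibiting a single $\sR$-set $X$ of cardinality at most $\e(S)$ that generates $S$. Recall that $\sR\rank(S)$ is defined as the minimum over all $\sR$-sets $X$ with $\sR[X]=S$ of the cardinality $\sharp X$. Thus to prove $\sR\rank(S)\leq\e(S)$ it suffices to produce one such $X$ with $\sharp X\leq\e(S)$, since the minimum of a set of nonnegative integers is bounded above by any of its members.

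First I would invoke Lemma \ref{lemma36}, which provides exactly the candidate set I need: given $S\in\sR$, the set $X=\{x\in\msg(S)\mid x\notin\min(\sR)\}$ is an $\sR$-set and satisfies $\sR[X]=S$. This is the heart of the argument, and almost all the work has already been done in that lemma. It remains only to estimate the cardinality of this $X$.

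The final step is the cardinality bound. By construction $X\subseteq\msg(S)$, so $X$ is a subset of the minimal system of generators of $S$. Therefore $\sharp X\leq\sharp\msg(S)=\e(S)$, where the last equality is the definition of the embedding dimension given in the introduction. Combining this with the fact that $X$ is an $\sR$-set generating $S$, we conclude that $\sR\rank(S)\leq\sharp X\leq\e(S)$, as required.

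This is a short corollary-style argument rather than a substantial proof, so I do not anticipate any genuine obstacle; the only thing to be careful about is to make sure the candidate $X$ is a legitimate competitor in the minimum defining $\sR\rank(S)$, i.e.\ that it really is an $\sR$-set with $\sR[X]=S$. Both of these are guaranteed verbatim by Lemma \ref{lemma36}, so no separate verification is needed. The inequality then follows immediately because $X$ is cut out of $\msg(S)$ by removing the elements lying in $\min(\sR)$, which can only decrease the cardinality.
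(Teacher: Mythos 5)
Your proof is correct and follows exactly the paper's route: the paper also presents this proposition as an immediate consequence of Lemma \ref{lemma36}, taking $X=\{x\in\msg(S)\mid x\notin\min(\sR)\}$ as a witness and bounding $\sharp X\leq\sharp\msg(S)=\e(S)$.
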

The proof of the following result is inmmediate.
\begin{lemma}\label{lemma39}
Let $\sR$ be a ratio-covariety and $S\in \sR.$ Then $\sR\rank(S)=0$ if and only if $S=\min(\sR).$
\end{lemma}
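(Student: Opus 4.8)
The plan is to reduce everything to a single observation: the only set of cardinality zero is the empty set, so the statement $\sR\rank(S)=0$ is equivalent to the existence of an $\sR$-set $X$ with $\sharp X=0$, i.e. $X=\emptyset$, satisfying $\sR[\emptyset]=S$. Thus the entire lemma hinges on computing $\sR[\emptyset]$ and showing it equals $\min(\sR)$.

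First I would check that $\emptyset$ is indeed an $\sR$-set. The condition $\emptyset\cap\min(\sR)=\emptyset$ holds trivially, and since $\sR$ is nonempty (being a ratio-covariety) with $\emptyset\subseteq S$ for every $S\in\sR$, the second defining condition of an $\sR$-set is satisfied as well. The key step is then to identify $\sR[\emptyset]$. By its definition (and Proposition \ref{proposition34}), $\sR[\emptyset]$ is the intersection of all elements of $\sR$ containing $\emptyset$, that is, the intersection of every element of $\sR$. Because $\min(\sR)\in\sR$ satisfies $\min(\sR)\subseteq T$ for all $T\in\sR$, this intersection is exactly $\min(\sR)$; hence $\sR[\emptyset]=\min(\sR)$.

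With this in hand, both implications follow at once. For the backward direction, if $S=\min(\sR)$, then $\emptyset$ is an $\sR$-set with $\sR[\emptyset]=S$, so $\sR\rank(S)\leq\sharp\emptyset=0$, forcing $\sR\rank(S)=0$. For the forward direction, if $\sR\rank(S)=0$, then some $\sR$-set $X$ with $\sR[X]=S$ has $\sharp X=0$, whence $X=\emptyset$ and $S=\sR[\emptyset]=\min(\sR)$. There is no real obstacle here; the only point requiring care is the verification that $\emptyset$ qualifies as an $\sR$-set and the correct identification $\sR[\emptyset]=\min(\sR)$, both of which are immediate from the axioms of a ratio-covariety and the definition of $\min(\sR)$.
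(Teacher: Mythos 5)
Your proof is correct, and since the paper declares this lemma's proof immediate and omits it, your argument is precisely the one intended: verify that $\emptyset$ is an $\sR$-set, identify $\sR[\emptyset]$ with the intersection of all elements of $\sR$, which is $\min(\sR)$, and read off both implications from the definition of $\sR\rank$. No gaps; the care you take in checking that $\emptyset$ qualifies as an $\sR$-set is exactly the right (and only) point of substance.
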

\begin{lemma}\label{lemma40}
If $\sR$ is a ratio-covarity, $S\in \sR,$ $S\neq \min(\sR)$ and $X$ is an $\sR$-set such that $S=\sR[X],$ then $\r(S)\in X.$
\end{lemma}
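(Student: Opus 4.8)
The plan is to argue by contradiction, leveraging the defining property (3) of a ratio-covariety together with the minimality of $\sR[X]$. First I would record the starting point: since $X$ is an $\sR$-set with $\sR[X]=S$, Proposition \ref{proposition34} tells us that $S$ is the smallest element of $\sR$ containing $X$; in particular $X\subseteq S$.

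Next, suppose toward a contradiction that $\r(S)\notin X$. Since $X\subseteq S$ and $\r(S)\notin X$, every element of $X$ lies in $S\backslash \{\r(S)\}$, so $X\subseteq S\backslash \{\r(S)\}$. Because $S\in \sR$ and $S\neq \min(\sR)$, condition (3) in the definition of a ratio-covariety guarantees that $S\backslash \{\r(S)\}\in \sR$.

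I would then combine these two facts: $S\backslash \{\r(S)\}$ is an element of $\sR$ that contains $X$. By the minimality of $\sR[X]=S$ furnished by Proposition \ref{proposition34}, this forces $S\subseteq S\backslash \{\r(S)\}$, which is impossible since $\r(S)\in S$. This contradiction shows $\r(S)\in X$, completing the argument.

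The argument is short and I do not anticipate a genuine obstacle; the only point requiring care is the observation that $X\subseteq S$ (which follows immediately from $\sR[X]=S$), so that excluding $\r(S)$ from $X$ indeed places all of $X$ inside $S\backslash \{\r(S)\}$, allowing property (3) and the minimality of $\sR[X]$ to collide.
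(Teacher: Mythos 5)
Your proposal is correct and follows essentially the same argument as the paper: assume $\r(S)\notin X$, deduce $X\subseteq S\backslash\{\r(S)\}\in\sR$ by condition (3), and contradict the minimality of $\sR[X]=S$ from Proposition \ref{proposition34}. No gaps.
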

\begin{proof}
	If $\r(S)\notin X,$ then $X\subseteq S \backslash \{r(S)\}.$ As $S\backslash \{\r(S)\}\in \sR,$ then $S=\sR[X]\subseteq S\backslash \{\r(S)\}$ which is absurd.
\end{proof}

As a consequence of Lemmas \ref{lemma39} y \ref{lemma40}, in the following result we characterize the numerical semigroups $S\in \sR$ such that  $\sR\rank(S)=1.$ 

\begin{proposition}\label{proposition41}Let $\sR$ be a ratio-covariety and $S\in \sR.$ Then  $\sR\rank(S)=1$ if and only if $S=\sR[\{\r(S)\}].$
\end{proposition}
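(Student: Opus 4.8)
The plan is to prove both implications of the biconditional $\sR\rank(S)=1 \iff S=\sR[\{\r(S)\}]$, relying chiefly on Lemmas \ref{lemma39} and \ref{lemma40} as suggested by the placement of this proposition.

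First I would handle the forward direction. Suppose $\sR\rank(S)=1$. Then $S\neq\min(\sR)$, since otherwise Lemma \ref{lemma39} would force $\sR\rank(S)=0$. By definition of the rank there exists an $\sR$-set $X$ with $\sharp X=1$ and $\sR[X]=S$. Writing $X=\{a\}$ for a single element $a$, Lemma \ref{lemma40} applies (because $S\neq\min(\sR)$) and yields $\r(S)\in X=\{a\}$, hence $a=\r(S)$. Therefore $S=\sR[\{a\}]=\sR[\{\r(S)\}]$, as desired.

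For the converse, assume $S=\sR[\{\r(S)\}]$. The set $\{\r(S)\}$ is an $\sR$-set of cardinality $1$ that generates $S$, so by the definition of $\sR\rank$ as a minimum over cardinalities of generating $\sR$-sets, we get $\sR\rank(S)\leq 1$. It remains to rule out $\sR\rank(S)=0$. By Lemma \ref{lemma39}, $\sR\rank(S)=0$ would mean $S=\min(\sR)$; but $\r(S)$ is defined and, since $\r(S)\notin\min(\sR)$ is needed for $\{\r(S)\}$ to be an $\sR$-set, the hypothesis $S=\sR[\{\r(S)\}]$ already presupposes $S\neq\min(\sR)$. (More carefully, if $S=\min(\sR)$ then $\{\r(S)\}$ cannot be an $\sR$-set, so $\sR[\{\r(S)\}]$ would be undefined; hence the hypothesis itself implies $S\neq\min(\sR)$.) Thus $\sR\rank(S)\geq 1$, and combining the two bounds gives $\sR\rank(S)=1$.

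I expect the only subtle point to be the well-definedness issue in the converse: one must be slightly careful that the expression $\sR[\{\r(S)\}]$ is only meaningful when $\{\r(S)\}$ is an $\sR$-set, i.e.\ when $\r(S)\notin\min(\sR)$, which in turn is tied to $S\neq\min(\sR)$. Once this is acknowledged, both directions are short and follow immediately from the two cited lemmas, so there is no serious computational obstacle here.
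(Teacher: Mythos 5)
Your proof is correct and follows exactly the route the paper intends: the paper states Proposition \ref{proposition41} without a written proof, remarking only that it is a consequence of Lemmas \ref{lemma39} and \ref{lemma40}, and your argument fills in precisely those details (Lemma \ref{lemma40} forces the singleton generator to be $\r(S)$, and Lemma \ref{lemma39} rules out rank $0$). Your extra care about $\{\r(S)\}$ being an $\sR$-set only when $S\neq\min(\sR)$ is a sensible observation and does not diverge from the paper's approach.
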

As a consequence of Propositions \ref{proposition37} and \ref{proposition41}, we have the following result.
\begin{corollary}\label{corollary42}
	Let $m<r<F$ be positive integers such that $m\nmid r$ and $F\notin \langle m,r \rangle.$ Then $\langle m,r \rangle \cup \{F+1,\rightarrow\}$ is an element of $\sR(F,m)$ with $\sR(F,m)$-$\rank$ equal to one. Moreover, every element of $\sR(F,m)$ with $\sR(F,m)$-$\rank$ equal to one, has this form.	
\end{corollary}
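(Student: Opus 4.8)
The plan is to prove Corollary \ref{corollary42} by combining the explicit description of minimal $\sR(F,m)$-systems of generators from Proposition \ref{proposition37} with the rank-one characterization from Proposition \ref{proposition41}. First I would verify that $S=\langle m,r\rangle \cup \{F+1,\rightarrow\}$ is indeed a numerical semigroup lying in $\sR(F,m)$. Since $m<r$ and $\gcd$ issues are handled by adjoining the tail $\{F+1,\rightarrow\}$, the set $S$ is a submonoid with finite complement; its multiplicity is $m$ because $m<r$ and no integer in $(0,m)$ is a sum of $m$'s and $r$'s. The Frobenius number is $F$ precisely because $F\notin \langle m,r\rangle$ (so $F\notin S$) while every integer exceeding $F$ is in the tail, hence $\F(S)=F$. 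Thus $S\in \sR(F,m)$.

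Next I would identify the ratio of $S$. By definition $\r(S)=\min(\msg(S)\backslash\{\m(S)\})=\min\{s\in S\mid m\nmid s\}$. Because $m\nmid r$ and $r<F$, the element $r$ belongs to $S$ and is not divisible by $m$; moreover any smaller element of $S$ not divisible by $m$ would have to be a nontrivial combination involving $r$, which is impossible as $r$ is the smallest generator beyond $m$. Hence $\r(S)=r$. The key step is then to show $S=\sR(F,m)[\{r\}]$. Using Proposition \ref{proposition37}, the unique minimal $\sR(F,m)$-system of generators of $S$ is $X=\{x\in\msg(S)\mid x\notin \Delta(F,m)\}$. Since $\Delta(F,m)=\langle m\rangle\cup\{F+1,\rightarrow\}$ and $\msg(S)$ consists of $m$, $r$, together with generators coming from the tail, the only minimal generator of $S$ not already in $\Delta(F,m)$ is $r$ itself; all tail contributions and $m$ lie in $\Delta(F,m)$. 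Therefore $X=\{r\}$, and by Proposition \ref{proposition41} this gives $\sR(F,m)\rank(S)=1$.

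For the converse, I would start from an arbitrary $S\in\sR(F,m)$ with $\sR(F,m)\rank(S)=1$. By Proposition \ref{proposition41} we have $S=\sR(F,m)[\{\r(S)\}]$, and by Lemma \ref{lemma40} combined with Proposition \ref{proposition37}, the unique minimal system of generators is $\{\r(S)\}$. Writing $r=\r(S)$, this means every minimal generator of $S$ outside $\Delta(F,m)$ equals $r$, so $\msg(S)\subseteq \{m\}\cup\{r\}\cup(\Delta(F,m)\cap\msg(S))$. Since $\Delta(F,m)=\langle m\rangle\cup\{F+1,\rightarrow\}$, the generators inside $\Delta(F,m)$ together with $m$ and $r$ generate exactly $\langle m,r\rangle\cup\{F+1,\rightarrow\}$; one checks that adjoining the forced tail recovers this form. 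The constraints $m<r<F$, $m\nmid r$ and $F\notin\langle m,r\rangle$ are then automatic: $m\nmid r$ because $r=\r(S)$, and $F\notin\langle m,r\rangle$ because $\F(S)=F$ forces $F\notin S=\langle m,r\rangle\cup\{F+1,\rightarrow\}$.

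The main obstacle I anticipate is the careful bookkeeping in showing that the minimal $\sR(F,m)$-system of generators reduces exactly to $\{r\}$, which requires confirming that no minimal generator drawn from the tail $\{F+1,\rightarrow\}$ can escape membership in $\Delta(F,m)$. Because $\Delta(F,m)$ already contains the entire tail $\{F+1,\rightarrow\}$, every generator of $S$ lying above $F$ is in $\Delta(F,m)$ and hence excluded from $X$ by Proposition \ref{proposition37}; this is the point where the specific shape of $\Delta(F,m)$ does the essential work, and I would make sure to invoke Lemma \ref{lemma9} to pin down $\min(\sR(F,m))=\Delta(F,m)$. Once this is settled, both implications follow cleanly from the cited propositions, and the proof is short.
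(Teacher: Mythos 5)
Your proposal is correct and follows exactly the route the paper intends: the corollary is stated there as an immediate consequence of Propositions \ref{proposition37} and \ref{proposition41} (with Lemma \ref{lemma9} pinning down $\Delta(F,m)$ as the minimum), and your argument is precisely the detailed derivation of that implication, correctly identifying $\{r\}$ as the unique minimal $\sR(F,m)$-system of generators in both directions.
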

\section{The elements of $\sR(F,m)$ with  maximun $\sR(F,m)$-$\rank$ }
Our first goal in this section will be to show that the maximum of the set $\{\sR(F,m)\rank(S)\mid S\in \sR(F,m)\}$ is less than or equal to $m-2.$ For this reason, we need to introduce some concepts and results.

The following result is well known and it appears in \cite[Proposition 2.10]{libro}.

\begin{lemma}\label{lemma43}
If $S$ is a numerical semigroup, then $\e(S)\leq \m(S).$	
\end{lemma}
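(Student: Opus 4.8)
The plan is to use the multiplicity $m=\m(S)$ as a modulus for a counting argument: I would show that the $\e(S)$ elements of $\msg(S)$ occupy pairwise distinct residue classes modulo $m$, so that their number cannot exceed the $m$ available classes. The only external fact I would invoke is the standard characterization of minimal generators (coming from the theory of minimal systems of generators in \cite{libro}): an element $x\in S\backslash\{0\}$ belongs to $\msg(S)$ if and only if $x$ cannot be written as $a+b$ with $a,b\in S\backslash\{0\}$.

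The key step is the claim that no two distinct minimal generators are congruent modulo $m$. To prove it, I would argue by contradiction: suppose $n_1,n_2\in\msg(S)$ with $n_1<n_2$ and $n_1\equiv n_2\pmod{m}$. Then $n_2-n_1$ is a strictly positive multiple of $m$, hence a nonzero element of $S$ (since $m=\m(S)\in S$ and $S$ is closed under addition), while $n_1$ is itself a nonzero element of $S$. Thus $n_2=n_1+(n_2-n_1)$ exhibits $n_2$ as a sum of two nonzero elements of $S$, contradicting the fact that $n_2\in\msg(S)$. Hence the residues modulo $m$ of the elements of $\msg(S)$ are all distinct.

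Finally, the assignment sending each element of $\msg(S)$ to its class in $\Z/m\Z$ is injective, and since $\Z/m\Z$ has exactly $m$ elements, I would conclude $\e(S)=\sharp\msg(S)\leq m=\m(S)$. The argument is essentially routine; the only point that genuinely needs care is checking that $n_2-n_1$ really is a \emph{nonzero} element of $S$ and that $n_1$ is nonzero, so that minimality is actually violated. Both are immediate here, the first because $n_2-n_1$ is a positive multiple of $m\in S$ and the second because $n_1$ is a minimal generator and hence nonzero.
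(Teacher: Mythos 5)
Your argument is correct: the characterization of minimal generators as the nonzero elements of $S$ that are not sums of two nonzero elements of $S$ is standard, and from it your residue-class injection into $\Z/\m(S)\Z$ immediately gives $\e(S)\leq \m(S)$. The paper offers no proof of its own, citing \cite[Proposition 2.10]{libro}, and the proof there is essentially the same counting of residues modulo the multiplicity (via the Ap\'ery set $\Ap(S,\m(S))$), so your approach matches the intended one.
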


A numerical semigroup S is said to have {\it maximal embedding
	dimension} (from now on $\MED$-{\it semigroup}) if $\e(S) = \m(S).$ 

In the literature one can find a long list of works dealing with the study
of one dimensional analytically irreducible local domains via their value
semigroups. One of the properties
studied for this kind of rings using this approach is that of being of maximal
embedding dimension (see \cite{abhyankar},\cite{barucci}, \cite{brown-herzog} and \cite{sally}).
 The characterization of ring with maximal embedding dimension via their value semigroup, gave rise to the notion of $\MED$-semigroup.

 From \cite[Corollary 3.2]{libro} we can deduce the following result.

\begin{lemma}\label{lemma44}If $S$ is a $\MED$-semigroup and $S\neq \N$, then $\F(S)=\max(\msg(S))-\m(S).$
\end{lemma}
\begin{proposition}\label{proposition45}If $S\in \sR(F,m),$ then $\sR(F,m)\rank(S)\leq m-2.$
\end{proposition}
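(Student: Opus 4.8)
The plan is to prove Proposition \ref{proposition45} by bounding the cardinality of the canonical minimal $\sR(F,m)$-system of generators of $S$, which by Proposition \ref{proposition37} is $X=\{x\in \msg(S)\mid x\notin \Delta(F,m)\}$. Since $\sR(F,m)\rank(S)\leq \sharp X$ (indeed $\sR(F,m)\rank(S)=\sharp X$ by minimality), it suffices to show $\sharp X\leq m-2$. The key observation is that $X=\msg(S)\setminus \big(\msg(S)\cap \Delta(F,m)\big)$, so I would split $\msg(S)$ into those generators that also lie in $\Delta(F,m)$ and those that do not, and argue that at least two generators must fall into the former set. Concretely, $\Delta(F,m)=\langle m\rangle\cup\{F+1,\rightarrow\}$, and I would identify the two ``forced'' generators as $m$ itself and one large generator coming from the tail $\{F+1,\rightarrow\}$.

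First I would note that $m=\m(S)\in\msg(S)$ and $m\in\Delta(F,m)$, so $m\notin X$; this removes one element from the count. Next I would exhibit a second minimal generator of $S$ lying in $\Delta(F,m)$. The natural candidate is the element $F+1$: since $F=\F(S)$, every integer greater than $F$ belongs to $S$, so $\{F+1,\rightarrow\}\subseteq S$, and $F+1$ is a minimal generator of $S$ unless it decomposes as a sum of two smaller nonzero elements of $S$. The heart of the argument is to locate, within $\Delta(F,m)\cap S=\Delta(F,m)$, a minimal generator of $S$ other than $m$. I would invoke Lemma \ref{lemma43} ($\e(S)\leq m$) together with the structure of the Ap\'ery set $\Ap(S,m)$: the $m$ residue classes modulo $m$ give exactly $m$ Ap\'ery elements, and the minimal generators of $S$ distinct from $m$ are precisely the nonzero Ap\'ery elements that are not expressible as sums of two smaller Ap\'ery elements. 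Thus $\sharp\msg(S)\leq m$, giving $\e(S)\le m$, and the task reduces to showing that at least two Ap\'ery elements (equivalently, two minimal generators counting $m$) lie in $\Delta(F,m)$.

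The main obstacle, and the step I would spend the most care on, is guaranteeing a \emph{second} generator in $\Delta(F,m)$ beyond $m$ itself. I expect the clean way is a parity/counting argument on $X$ versus $\e(S)$: since $\e(S)\leq m$ and $m\in\msg(S)\cap\Delta(F,m)$, we immediately get $\sharp X\leq m-1$; to sharpen this to $m-2$ I must produce one more generator in $\Delta(F,m)$. Here I would look at the largest minimal generator $\max(\msg(S))$. If $S$ is an $\MED$-semigroup then Lemma \ref{lemma44} gives $\max(\msg(S))=F+m$, and since $F+m>F$ we have $F+m\in\{F+1,\rightarrow\}\subseteq\Delta(F,m)$, providing the desired second generator and yielding $\sharp X\leq m-2$. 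If $S$ is not an $\MED$-semigroup, then $\e(S)<m$ by Lemma \ref{lemma43}, so $\sharp X\leq \e(S)-1\leq m-2$ directly, because $m$ is always one of the generators lying in $\Delta(F,m)$.

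In summary, the proof divides into two cases according to Lemma \ref{lemma43}: when $\e(S)=m$ the $\MED$ structure from Lemma \ref{lemma44} forces the top generator $F+m$ into the tail of $\Delta(F,m)$, contributing a second generator outside $X$; when $\e(S)<m$ the embedding dimension bound alone suffices. The delicate point to verify rigorously is that in the $\MED$ case $F+m$ is genuinely a minimal generator and genuinely distinct from $m$ (which holds since $m\nmid F$ forces $F+m\neq km$ for the relevant range, and $F+m>m$), so that both $m$ and $F+m$ are counted in $\e(S)=m$ but excluded from $X$. Combining the cases gives $\sR(F,m)\rank(S)=\sharp X\leq m-2$ in all situations, completing the proof.
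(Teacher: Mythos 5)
Your proof is correct and follows essentially the same route as the paper: identify $\sR(F,m)\rank(S)$ with $\sharp\{x\in\msg(S)\mid x\notin\Delta(F,m)\}$ via Proposition \ref{proposition37}, discount $m$ to get the bound $\e(S)-1\leq m-1$, and then rule out equality by observing that $\e(S)=m$ forces $S$ to be a $\MED$-semigroup, whence Lemma \ref{lemma44} places $\max(\msg(S))=F+m$ in the tail $\{F+1,\rightarrow\}\subseteq\Delta(F,m)$, giving a second excluded generator. The digression about $F+1$ and Ap\'ery sets in your middle paragraph is unnecessary, but the final two paragraphs are exactly the paper's argument.
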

\begin{proof}
	By Proposition \ref{proposition37}, we know that $\sR(F,m)\rank(S)$ is the cardinality of the set $X=\{x\in \msg(S)\mid x\notin \Delta(F,m)\}.$ As $m\in \msg(S)$ and $m\in \Delta(F,m),$ then $\sR(F,m)\rank(S)\leq \e(S)-1.$ Now, by applying Lemma \ref{lemma43}, we have that $\sR(F,m)\rank(S)\leq \m(S)-1.$ To finish the proof, we will see that the case $\sR(F,m)\rank(S)= \m(S)-1$ is impossible. Indeed, if $\sR(F,m)\rank(S)= \m(S)-1,$ then we deduce that $\e(S)=m$ and so $S$ is a $\MED$-semigroup. But, if $S$ is a $\MED$-semigroup, then by applying Lemma \ref{lemma44}, we have that $\max(\msg(S))\in \Delta(F,m).$ Now, by Proposition \ref{proposition37}, we have that $\sR(F,m)\rank(S)\leq \e(S)-2\leq m-2.$
\end{proof}
The following result is easily deduced from  Proposition \ref{proposition37}.
\begin{proposition}\label{proposition46}
If $\{m=a_1<a_2<\cdots <a_k<a_{k+1}=F\}\subseteq \N$ and $a_{i+1}\notin \langle a_1,\cdots,a_i \rangle$ for all $i\in \{1,\cdots,k\},$ then $\langle a_1,\cdots,a_k\rangle \cup \{F+1,\rightarrow\}$ is an element of 
$\sR(F,m)$ with $\sR(F,m)$-$\rank$ equal to $k-1.$ Moreover, every element of $\sR(F,m)$ with $\sR(F,m)$-$\rank$ equal to $k-1$ has this form.
\end{proposition}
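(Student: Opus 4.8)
The plan is to reduce both assertions to a description of $\msg(S)$ by means of Proposition \ref{proposition37}. That proposition identifies, for any $S\in\sR(F,m)$, the unique minimal $\sR(F,m)$-system of generators as $X=\{x\in\msg(S)\mid x\notin\Delta(F,m)\}$, and its proof shows moreover that every $\sR(F,m)$-set $Y$ with $\sR(F,m)[Y]=S$ satisfies $X\subseteq Y$. Consequently $\sR(F,m)\rank(S)=\sharp X$, and everything comes down to counting the minimal generators of $S$ that avoid $\Delta(F,m)$.

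For the first assertion I would first check that $S=\langle a_1,\dots,a_k\rangle\cup\{F+1,\rightarrow\}$ lies in $\sR(F,m)$: it is a cofinite submonoid of $\N$, hence a numerical semigroup; its multiplicity is $a_1=m$ because every generator is $\geq m$; and its Frobenius number is $F$ because $\{F+1,\rightarrow\}\subseteq S$ while the hypothesis at $i=k$, namely $F=a_{k+1}\notin\langle a_1,\dots,a_k\rangle$, forces $F\notin S$. Next I would compute $\msg(S)$. The crucial observation is that an element $s\in S$ with $s<F+1$ can only be a sum of elements of $S$ that are themselves $<F+1$, so below $F+1$ the monoid $S$ coincides with $\langle a_1,\dots,a_k\rangle$ and has the same minimal generators there; the hypothesis $a_{i+1}\notin\langle a_1,\dots,a_i\rangle$ says precisely that each $a_i$ is a minimal generator, so the minimal generators of $S$ below $F+1$ are exactly $\{a_1,\dots,a_k\}$. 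Any remaining minimal generator is $\geq F+1$ and thus lies in $\Delta(F,m)$. Therefore $X=\{a_1,\dots,a_k\}\setminus\Delta(F,m)$; since $a_1=m\in\Delta(F,m)$ while each $a_i$ with $i\geq2$ is neither a multiple of $m$ (otherwise $a_i\in\langle m\rangle\subseteq\langle a_1,\dots,a_{i-1}\rangle$) nor $\geq F+1$, we obtain $X=\{a_2,\dots,a_k\}$ and $\sR(F,m)\rank(S)=k-1$.

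For the converse, take $S\in\sR(F,m)$ with $\sR(F,m)\rank(S)=k-1$ and let $X=\{x\in\msg(S)\mid x\notin\Delta(F,m)\}=\{a_2<\cdots<a_k\}$ be its minimal system of generators, writing $a_1=m$ and $a_{k+1}=F$. The same below-$F$ analysis shows that the minimal generators of $S$ strictly below $F$ are exactly $\{a_1,\dots,a_k\}$ (the only minimal generator that is a multiple of $m$ is $m$ itself, every minimal generator $\geq F+1$ sits in $\Delta(F,m)$, and $F\notin S$), and that every $s\in S$ with $s<F$ lies in $\langle a_1,\dots,a_k\rangle$. Hence $S=\langle a_1,\dots,a_k\rangle\cup\{F+1,\rightarrow\}$ with $m=a_1<\cdots<a_k<F=a_{k+1}$. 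Finally, since $\{a_1,\dots,a_k\}$ is the minimal generating set of $\langle a_1,\dots,a_k\rangle$, each $a_{i+1}$ with $i<k$ fails to lie in $\langle a_1,\dots,a_i\rangle$, while $a_{k+1}=F\notin S$ gives the remaining condition at $i=k$.

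The step I expect to require the most care is the claim that below $F+1$ the semigroup $S$ behaves exactly like $\langle a_1,\dots,a_k\rangle$, together with the bookkeeping that every minimal generator in the tail $\{F+1,\rightarrow\}$ automatically belongs to $\Delta(F,m)$ and is therefore invisible to $X$. Once this localisation of $\msg(S)$ below the Frobenius number is in place, the counting in the first part and the reconstruction of the $a_i$ in the converse are routine.
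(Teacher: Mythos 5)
Your proof is correct and follows exactly the route the paper intends: the paper gives no details, stating only that the result "is easily deduced from Proposition \ref{proposition37}", and your argument is precisely that deduction, reducing the rank to the cardinality of $X=\{x\in \msg(S)\mid x\notin \Delta(F,m)\}$ and then identifying $\msg(S)$ below $F+1$ with $\{a_1,\dots,a_k\}$. The details you supply (each $a_j$ with $j\ge 2$ is a non-multiple of $m$ below $F$, every minimal generator in the tail lies in $\Delta(F,m)$, and the reconstruction of $S$ in the converse) are all sound.
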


\begin{corollary}\label{corollary47}
	If $S\in \sR(F,m)$ and $\sR(F,m)\rank(S)=m-2,$ then $F\ge 2m-1.$ Moreover, if $F=2m-1,$ then $S=\{0,m,m+1,\cdots, 2m-2,2m, \rightarrow\}.$
\end{corollary}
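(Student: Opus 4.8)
The plan is to reduce everything to the structural description already supplied by Proposition \ref{proposition46} and then finish with an elementary counting argument. First I would apply the ``moreover'' part of Proposition \ref{proposition46}. Since $\sR(F,m)\rank(S)=m-2$, setting $k-1=m-2$ (that is, $k=m-1$) tells us that there are integers $\{m=a_1<a_2<\cdots <a_{m-1}<a_m=F\}$ with $a_{i+1}\notin\langle a_1,\dots,a_i\rangle$ for all $i\in\{1,\dots,m-1\}$, and that $S=\langle a_1,\dots,a_{m-1}\rangle\cup\{F+1,\rightarrow\}$. The point to be careful about here is the index bookkeeping: rank $m-2$ forces $k=m-1$, so the chain $a_1,\dots,a_{k+1}$ consists of exactly $m$ distinct integers, with $a_m=F$.

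For the first assertion, I would simply observe that $a_1,\dots,a_m$ are $m$ pairwise distinct positive integers with smallest term $a_1=m$ and largest term $a_m=F$. Consecutive terms differ by at least $1$, so $F=a_m\ge a_1+(m-1)=2m-1$, which is exactly the claimed bound.

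For the ``moreover'' part, I would assume $F=2m-1$. Then equality holds in $a_m\ge a_1+(m-1)$, which forces $a_{i+1}-a_i=1$ for every $i$, i.e. $a_i=m+(i-1)$. Hence the generators are $a_1,\dots,a_{m-1}=m,m+1,\dots,2m-2$, and $S=\langle m,m+1,\dots,2m-2\rangle\cup\{2m,\rightarrow\}$. To pin down $S$ explicitly I would use only the data guaranteed by $S\in\sR(F,m)$: by construction $\{2m,\rightarrow\}\subseteq S$ and $\{0,m,m+1,\dots,2m-2\}\subseteq S$; since $\m(S)=m$, the integers $1,\dots,m-1$ are gaps, and since $\F(S)=F=2m-1$, the integer $2m-1$ is a gap as well. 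Collecting this yields $S=\{0,m,m+1,\dots,2m-2,2m,\rightarrow\}$.

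I do not anticipate a genuine obstacle: once Proposition \ref{proposition46} is invoked, both parts follow from distinctness of the $a_i$ together with the multiplicity and Frobenius data carried by membership in $\sR(F,m)$. The only place demanding care is confirming that the chain produced by Proposition \ref{proposition46} has precisely $m$ terms (so that the count gives $2m-1$ rather than an off-by-one value), and that the explicit union $\langle m,\dots,2m-2\rangle\cup\{2m,\rightarrow\}$ has no hidden elements below $2m$ beyond the stated generators, which is immediate from $\m(S)=m$ and $\F(S)=2m-1$.
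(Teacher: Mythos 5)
Your proof is correct and follows essentially the same route as the paper: both assertions are reduced to the structural description of rank-$(k-1)$ elements in Proposition \ref{proposition46}, with the bound $F\ge 2m-1$ obtained by counting the $m$ distinct terms $m=a_1<\cdots<a_m=F$ of the chain. The only cosmetic difference is that for the case $F=2m-1$ the paper additionally cites Lemma \ref{lemma30}, whereas you pin down $S=\{0,m,m+1,\dots,2m-2,2m,\rightarrow\}$ directly from the equality case of the count together with the multiplicity and Frobenius data; both routes are fine.
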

\begin{proof}
	By Proposition \ref{proposition46}, it is straightforward to see that $F\ge 2m-1.$ By applying now Lemma \ref{lemma30} and Proposition \ref{proposition46}, we have that  $S=\{0,m,m+1,\cdots, 2m-2,2m, \rightarrow\}.$ 
\end{proof}

\begin{proposition}\label{proposition48} If $F>2m,$ then the set $\{S\in \sR(F,m)\mid  \sR(F,m)\rank(S)=m-2\}\neq \emptyset.$	
\end{proposition}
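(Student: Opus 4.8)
By Proposition \ref{proposition46}, an element of $\sR(F,m)$ has $\sR(F,m)$-$\rank$ equal to $m-2$ exactly when it is of the form $\langle a_1,\dots,a_{m-1}\rangle\cup\{F+1,\rightarrow\}$ for a chain $m=a_1<\cdots<a_{m-1}<a_m=F$ with $a_{i+1}\notin\langle a_1,\dots,a_i\rangle$. By Proposition \ref{proposition37} the $\sR(F,m)$-$\rank$ equals the number of minimal generators of $S$ lying in $\Delta(F,m)$'s complement, i.e. the minimal generators $x$ with $m<x<F$ (a minimal generator $\ne m$ is never a multiple of $m$, and those $>F$ lie in $\Delta(F,m)$). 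Hence it suffices to exhibit a single numerical semigroup $T$ with $\m(T)=m$, $\F(T)=F$ and exactly $m-2$ minimal generators in the open interval $(m,F)$; I do \emph{not} need to control $\e(T)$, only how many generators fall below $F$.

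\textbf{Construction.} Write $F=qm+s$ with $1\le s\le m-1$ and $q=\lfloor F/m\rfloor\ge 2$ (this is exactly the hypothesis $F>2m$). I would define $T$ through its Apéry set $\Ap(T,m)=\{0,w(1),\dots,w(m-1)\}$, $w(i)\equiv i\pmod m$, by
\[
w(i)=\begin{cases} i+qm, & 1\le i\le s-1,\\ s+(q+1)m, & i=s,\\ i+(q-1)m, & s+1\le i\le m-1.\end{cases}
\]
In words, every residue class receives its smallest admissible representative, except class $s$, whose representative is pushed one extra multiple of $m$ out to $s+(q+1)m=F+m$.

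\textbf{Verification.} The plan is to check three things. First, that this is genuinely the Apéry set of a numerical semigroup, i.e. $w(i)+w(j)\ge w\big((i+j)\bmod m\big)$ for all $i,j$ (which already forces $T$ to be a semigroup with the prescribed Apéry set); the only non-immediate inequalities are those landing in class $s$, and the values $q$ (for $i<s$) together with the $+1$ produced by wrap-around (for $i>s$) are exactly what yields $w(i)+w(j)\ge F+m=w(s)$. Second, that $\F(T)=F$: since $w(i)\le(m-1)+qm<F+m$ for $i\ne s$, the maximum of $\Ap(T,m)$ is $w(s)=F+m$, so $\F(T)=w(s)-m=F$. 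Third, writing $w(i)=i+k_i m$, each $w(i)$ with $i\ne s$ lies in $(m,F)$ and is a minimal generator: it is $<F$ by construction, and it is not a sum of two nonzero Apéry elements because any such sum has $m$-adic height at least $2(q-1)\ge q$, whereas $k_i\le q$ — strict for $q\ge3$, and the borderline $q=2$ being impossible on residue grounds. Counting, $T$ has exactly $m-2$ minimal generators in $(m,F)$, so $\sR(F,m)\rank(T)=m-2$ by Proposition \ref{proposition37}, and the set is nonempty.

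\textbf{Main obstacle.} The delicate point is reconciling two opposing demands with a single choice of coordinates. Forcing $F$ to be a gap (equivalently, making $F+m$ the least element of class $s$) requires, via the Kunz inequalities for the residue pairs summing to $s$, that the coordinates be \emph{large}, namely $k_a+k_b$ (plus $1$ on wrap-around) at least $q+1$; whereas keeping the other $m-2$ generators \emph{below} $F$ forces their coordinates to be as \emph{small} as possible, equal to $q-1$ for residues above $s$. I expect the real work to be verifying that the rule above meets both constraints simultaneously — that every pairwise sum into class $s$ reaches $F+m$ while every other $w(i)$ stays under $F$ — the favorable range $q\ge2$ (that is, $F>2m$) being precisely what makes the wrap-around inequalities hold. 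Once these finite checks are in place, Proposition \ref{proposition46} repackages $T$ as the required maximum-rank element of $\sR(F,m)$.
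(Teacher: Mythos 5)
Your proof is correct, and in fact the semigroup you build is \emph{literally the same one} the paper uses: the paper takes $S=\langle m,\,F-(m-1),\,F-(m-2),\dots,F-1\rangle\cup\{F+1,\rightarrow\}$, and a short computation shows that its Ap\'ery set with respect to $m$ is exactly your $\{0,w(1),\dots,w(m-1)\}$ with $w(i)=i+qm$ for $i<s$, $w(s)=F+m$, and $w(i)=i+(q-1)m$ for $i>s$, where $F=qm+s$. The difference is purely in the verification machinery. The paper's description by generators makes everything nearly immediate: the $m-1$ consecutive integers $F-1,\dots,F-(m-1)$ all exceed $m$ because $F>2m$, exactly one of them is a multiple of $m$ because $m\nmid F$, sums of two non-multiple generators already exceed $F$, so the remaining $m-2$ are minimal generators outside $\Delta(F,m)$ and Proposition \ref{proposition37} finishes. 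Your route through Kunz coordinates requires checking the inequalities $k_i+k_j\,(+1)\ge k_{(i+j)\bmod m}$ and then separately certifying minimality of each $w(i)$ via the height argument (including the $q=2$ borderline case, which you correctly dispose of on residue grounds); all of these checks go through, but they are the price of not naming the generating set explicitly. What your presentation buys is a transparent view of \emph{why} $F>2m$ is needed (it is exactly the wrap-around inequality into the class of $s$); what the paper's buys is brevity. One small remark: your reduction via Proposition \ref{proposition46} is not actually needed -- Proposition \ref{proposition37} alone converts the problem into counting minimal generators outside $\Delta(F,m)$, which is what you end up doing anyway.
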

\begin{proof}
	Let $S=\langle m, F-(m-1), F-(m-2),\cdots, F-1\rangle \cup \{F+1,\rightarrow\}.$ As $F>2m,$ then $F-(m-1)>m.$ As $m\nmid F,$ then there is a unique $i\in \{1,\cdots,m-1\}$ such that $m\mid F-i.$ Then 
	we easily deduce that $\{x\in \msg(S)\mid x\notin \Delta(F,m)\}=\{F-(m-1),\cdots,F-1\}\backslash \{F-i\}.$ By applying now  Proposition \ref{proposition37}, we have that $\sR(F,m)\rank(S)=m-2.$	
\end{proof}

The above propostion allow us to define a new kind of semigroups. 

A numerical semigroup $S$ is said to have {\it maximal rank} 
	 (hereinafter $\MR$-{\it semigroup} ) if $\F(S)>2\m(S)$ and  $\sR(\F(S),\m(S))\rank(S)=\m(S)-2.$
	 
	 The existence of $\MR$-semigroups is assured by  Proposition \ref{proposition46}. Now we present a characterization of these semigroups which  is also a direct consequence of Proposition \ref{proposition46}.

\begin{proposition}\label{proposition49} Let $S\in \sR(F,m)$ with $F>2m.$ Then $S$ is a $\MR$-semigroup if and only if one  of the following statements is true:
	\begin{itemize}
		\item $\e(S)=m-1$ and $\msg(S)\subseteq \{1,\cdots, F\}.$
		\item $\e(S)=m$ and  $\sharp\{x\in \msg(S)\mid x>F\}=1.$
	\end{itemize}
\end{proposition}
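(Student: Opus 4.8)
The plan is to characterize $\MR$-semigroups by reformulating the condition $\sR(F,m)\rank(S)=m-2$ through Proposition~\ref{proposition37}, which tells us that the $\sR(F,m)$-rank of $S$ equals the cardinality of $X=\{x\in\msg(S)\mid x\notin\Delta(F,m)\}$. Since $\Delta(F,m)=\langle m\rangle\cup\{F+1,\rightarrow\}$, the generators of $S$ lying in $\Delta(F,m)$ are precisely $m$ itself (always a generator) together with any generators exceeding $F$, i.e.\ lying in $\{F+1,\rightarrow\}$. Writing $e=\e(S)=\sharp\msg(S)$ and $j=\sharp\{x\in\msg(S)\mid x>F\}$, and noting $m$ contributes exactly one generator inside $\Delta(F,m)$, I would record the clean identity $\sR(F,m)\rank(S)=e-1-j$. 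The whole proof then reduces to solving $e-1-j=m-2$, that is $e=m-1+j$, subject to the constraints coming from the general theory.

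First I would pin down the admissible range of $e$ and $j$. By Lemma~\ref{lemma43} we have $e\le m$, so $e\in\{m-1,m\}$ is forced once $e=m-1+j$ with $j\ge 0$; the two listed cases correspond exactly to $j=0$ (giving $e=m-1$) and $j=1$ (giving $e=m$). I would then argue each direction. For the case $e=m-1$, the equation gives $j=0$, meaning no minimal generator exceeds $F$; since every minimal generator is at most $F$ (as $F$ is the Frobenius number and generators are nonzero elements of $S$, hence $\le F$ unless they force membership beyond), this is exactly the condition $\msg(S)\subseteq\{1,\cdots,F\}$. For the case $e=m$, the equation forces $j=1$, i.e.\ exactly one minimal generator exceeds $F$, which is the second bullet. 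Conversely, each bullet plugged back into $\sR(F,m)\rank(S)=e-1-j$ yields $m-2$.

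The subtle point I expect to be the main obstacle is the impossibility of the degenerate combination $e=m$ with $j=0$ (which would wrongly give rank $m-1$) and, dually, ensuring $e=m-1$ cannot be paired with $j\ge 1$. The first of these is precisely the content already extracted in the proof of Proposition~\ref{proposition45}: if $e=m$ then $S$ is a $\MED$-semigroup, and Lemma~\ref{lemma44} forces $\max(\msg(S))=F+\m(S)>F$, so that in the $\MED$ case one automatically has $j\ge 1$, whence $e=m$ necessarily carries $j=1$ under the rank constraint. For $e=m-1$ the bound $j=0$ is immediate from the arithmetic $e-1-j=m-2$ together with $j\ge0$. So I would lean heavily on Lemma~\ref{lemma44} and the $\MED$ dichotomy to rule out the spurious case and to match the two bullets with the two feasible values of $j$.

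In summary, the argument is essentially a bookkeeping translation of the rank via Proposition~\ref{proposition37} into the quantity $e-1-j$, followed by solving $e-1-j=m-2$ under $e\le m$ (Lemma~\ref{lemma43}) and the $\MED$ consequence (Lemma~\ref{lemma44}). I would present it as a short computation establishing $\sR(F,m)\rank(S)=\e(S)-1-\sharp\{x\in\msg(S)\mid x>F\}$, then a two-case analysis on $\e(S)\in\{m-1,m\}$; the only place requiring genuine input beyond arithmetic is invoking the maximal-embedding-dimension structure to force exactly one large generator when $\e(S)=m$.
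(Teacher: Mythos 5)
Your proposal is correct and follows essentially the route the paper intends: the paper leaves Proposition~\ref{proposition49} as a direct consequence of Proposition~\ref{proposition46}, which itself rests on Proposition~\ref{proposition37}, and your identity $\sR(F,m)\rank(S)=\e(S)-1-\sharp\{x\in\msg(S)\mid x>F\}$ is exactly the bookkeeping that deduction requires (using that the only minimal generator of $S$ lying in $\Delta(F,m)$ and not exceeding $F$ is $m$ itself). Your discussion of the case $\e(S)=m$, $j=0$ via Lemma~\ref{lemma44} is a valid observation but not actually needed, since that combination would give rank $m-1$ and so fails both the hypothesis and the two bullets anyway.
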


Next we illustrate this characterization with an example.

\begin{example}\label{example50}
	\begin{enumerate}
	\item Let $S=\langle 5,7,9,11\rangle.$ Then $S\in \sR(13,5),$ $\e(S)=4$ and $\msg(S)\subseteq\{1, \cdots, 13\}.$ Therefore, by applying Proposition \ref{proposition49}, we have that $S$ is a $\MR$-semigroup.
	\item If $S=\langle 5,12,13,14,21\rangle,$ then $S\in \sR(16,5),$ $\e(S)=5$ and $\{x\in \msg(S)\mid x>16\}=\{21\}.$ Consequently,  by applying Proposition \ref{proposition49}, we can assert  that $S$ is a $\MR$-semigroup.
	\end{enumerate}
\end{example}

\section{The ratio-covariety generated by a finite family of numerical semigroups}

In general, the intersection of ratio-covarieties is not a ratio-covariety. Actually, if $S$ and $T$ are numerical semigroups such that  $S\neq T,$ then $\sR_1=\{S\}$ and $\sR_2=\{T\}$ are ratio-covarieties. However, $\sR_1\cap \sR_2=\emptyset,$ which is not a ratio-covariety.

The following result has an immediate proof.

\begin{lemma}\label{lemma51}
	If $\{\sR_i\}_{i\in I}$ is a family of ratio-covarieties and $\min(\sR_i)=\Delta$ for every $i\in I,$ then $\displaystyle\cap_{i\in I}\sR_i$ is a ratio-covariety.	
\end{lemma}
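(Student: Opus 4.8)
The plan is to verify the three defining conditions of a ratio-covariety directly for $\sR:=\bigcap_{i\in I}\sR_i$, with the entire argument hinging on the single hypothesis that all the minima coincide. First I would observe that $\sR$ is nonempty: since $\Delta=\min(\sR_i)\in\sR_i$ for every $i\in I$, the semigroup $\Delta$ belongs to every $\sR_i$ and hence to their intersection. This both secures nonemptiness (required for $\sR$ to be a ratio-covariety at all) and supplies the candidate for the minimum.

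For condition 1 I would argue that $\Delta=\min(\sR)$. Indeed $\Delta\in\sR$ by the previous remark, and if $S\in\sR$ then $S\in\sR_i$ for each $i$, so $\Delta=\min(\sR_i)\subseteq S$; thus $\Delta$ lies in $\sR$ and is contained in every element of $\sR$, which is exactly what it means to be $\min(\sR)$. Condition 2 is then immediate: given $\{S,T\}\subseteq\sR$, we have $\{S,T\}\subseteq\sR_i$ for every $i$, and since each $\sR_i$ is closed under intersection, $S\cap T\in\sR_i$ for all $i$, whence $S\cap T\in\sR$.

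The one place where the common-minimum hypothesis is genuinely used is condition 3, and this is the step I would treat most carefully. Take $S\in\sR$ with $S\neq\min(\sR)=\Delta$. For each $i$ we have $S\in\sR_i$ and, crucially, $S\neq\Delta=\min(\sR_i)$; here the assumption $\min(\sR_i)=\Delta$ for all $i$ guarantees that the inequality $S\neq\min(\sR_i)$ holds simultaneously for every index. Applying condition 3 inside each $\sR_i$ then yields $S\backslash\{\r(S)\}\in\sR_i$ for all $i$, so $S\backslash\{\r(S)\}\in\sR$, completing the verification.

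The main (and essentially only) obstacle is the potential failure of condition 3 in the absence of the hypothesis: if the $\min(\sR_i)$ differed, an element $S\in\sR$ could coincide with $\min(\sR_j)$ for some $j$ while still being distinct from $\min(\sR)$, so condition 3 would be unavailable in $\sR_j$ and the removal $S\backslash\{\r(S)\}$ might escape the intersection. The equality of all minima is precisely what rules this out, after which the argument is routine. (This also dovetails with the remark preceding the lemma, where two singleton ratio-covarieties with different minima have empty, hence non-ratio-covariety, intersection.)
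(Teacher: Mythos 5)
Your proof is correct and is precisely the direct verification the authors have in mind when they say the lemma ``has an immediate proof'': $\Delta$ lies in every $\sR_i$, hence in the intersection and serves as its minimum; closure under intersection is inherited componentwise; and for condition 3 the common minimum guarantees $S\neq\min(\sR_i)$ for every $i$ simultaneously, so $S\backslash\{\r(S)\}$ stays in each $\sR_i$. No differences from the paper's (omitted) argument worth noting.
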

Let $m$ and $F$ be positive integers such that $m<F$ and $m\nmid F,$ then we denote by $\sA(F,m)=\{S\mid S \mbox{ is a numerical semigroup, }\F(S)\leq F \mbox{ and }\m(S)=m\}.$

The proof of the following result is easy.

\begin{lemma}\label{lemma52} With the above notation, $\sA(F,m)$ is a ratio-covariety being $\Delta(F,m)$ its minimum.
\end{lemma}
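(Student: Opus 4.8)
The plan is to verify the three defining conditions of a ratio-covariety for the family $\sA(F,m)$, and simultaneously identify its minimum as $\Delta(F,m)$. Recall that a ratio-covariety must satisfy: (1) it has a minimum element, (2) it is closed under intersection, and (3) for every non-minimal element $S$, the set $S\backslash\{\r(S)\}$ again belongs to the family. Since $\sA(F,m)$ collects all numerical semigroups of multiplicity exactly $m$ whose Frobenius number is at most $F$, the strategy is to check that each of these operations preserves the two constraints ``$\m(S)=m$'' and ``$\F(S)\le F$''.

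First I would establish that $\Delta(F,m)=\langle m\rangle\cup\{F+1,\rightarrow\}$ is the minimum of $\sA(F,m)$. Clearly $\m(\Delta(F,m))=m$ and $\F(\Delta(F,m))=F\le F$, so $\Delta(F,m)\in\sA(F,m)$. For minimality I would argue that any $S\in\sA(F,m)$ must contain $\Delta(F,m)$: since $\m(S)=m$ we have $\langle m\rangle\subseteq S$, and since $\F(S)\le F$ every integer strictly larger than $F$ lies in $S$, so $\{F+1,\rightarrow\}\subseteq S$; combining these gives $\Delta(F,m)\subseteq S$. This handles condition (1).

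Next I would verify condition (2), closure under intersection. Given $\{S,T\}\subseteq\sA(F,m)$, part (1) of Lemma~\ref{lemma10} guarantees that $S\cap T$ is a numerical semigroup with $\F(S\cap T)=\max\{\F(S),\F(T)\}\le F$. For the multiplicity, since both $S$ and $T$ contain $\langle m\rangle$ we have $m\in S\cap T$, and since neither $S$ nor $T$ contains any positive integer smaller than $m$, neither does $S\cap T$; hence $\m(S\cap T)=m$. Thus $S\cap T\in\sA(F,m)$. For condition (3), let $S\in\sA(F,m)$ with $S\neq\Delta(F,m)$; then $\r(S)$ is a minimal generator of $S$ distinct from $m$, so by part (2) of Lemma~\ref{lemma10} the set $S\backslash\{\r(S)\}$ is a numerical semigroup. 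Removing $\r(S)$ does not disturb the multiplicity, since $\r(S)>\m(S)=m$ and $m$ remains, so $\m(S\backslash\{\r(S)\})=m$; and removing an element can only increase or preserve the Frobenius number, so I must confirm $\F(S\backslash\{\r(S)\})\le F$.

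The one point needing genuine care, and the step I expect to be the mild obstacle, is this last Frobenius bound in condition (3). The removed ratio $\r(S)$ satisfies $\r(S)=\min(\msg(S)\backslash\{m\})$, so $\r(S)\le\F(S)\le F$ whenever $\r(S)$ is a gap-adjacent generator; more precisely, since $S\neq\Delta(F,m)$ the element $\r(S)$ is a minimal generator that is not of the form $F+1$ or larger (every integer exceeding $F$ already lies in $\Delta(F,m)\subseteq S$ and is a non-generator once $S\neq\Delta(F,m)$ forces extra small elements). Consequently $\r(S)\le F$, so the new Frobenius number $\F(S\backslash\{\r(S)\})=\max\{\F(S),\r(S)\}=\F(S)\le F$, and the constraint is preserved. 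Assembling the three verified conditions gives that $\sA(F,m)$ is a ratio-covariety with minimum $\Delta(F,m)$, completing the proof.
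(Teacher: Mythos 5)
Your verification is correct in substance; the paper itself omits the proof entirely (``The proof of the following result is easy.''), so there is no authorial argument to diverge from, and a direct check of the three axioms is the only natural route. Conditions (1) and (2) are handled cleanly. The one place where your justification is imprecise is exactly the step you flagged: to show $\F(S\backslash\{\r(S)\})\le F$ you assert parenthetically that every integer exceeding $F$ ``is a non-generator once $S\neq\Delta(F,m)$,'' which is false as a general statement --- e.g.\ $S=\langle 5,12,13,14,21\rangle$ has $\F(S)=16$ and the minimal generator $21>16$. What you actually need is only that the \emph{smallest} minimal generator other than $m$ is at most $F$, and the clean argument is the one you gesture at: since $\Delta(F,m)\subsetneq S$, pick $y\in S\backslash\Delta(F,m)$; then $y\le F$ and $m\nmid y$, so $\r(S)=\min\{s\in S\mid m\nmid s\}\le y\le F$, whence $\F(S\backslash\{\r(S)\})=\max\{\F(S),\r(S)\}\le F$. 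With that one-line repair the proof is complete and matches what the paper implicitly intends.
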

The next lemma has an immediate proof. 
\begin{lemma}\label{lemma53}
	Let $S_1,\cdots, S_k$ be numerical semigroups with multiplicity $m$ and let $F=\max \{\F(S_1),\cdots, \F(S_k) \}.$ Then $\{S_1,\cdots, S_k\}\subseteq \sA(F,m).$
\end{lemma}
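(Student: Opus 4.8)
The final statement to prove is Lemma \ref{lemma53}:

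\begin{quote}
Let $S_1,\cdots, S_k$ be numerical semigroups with multiplicity $m$ and let $F=\max \{\F(S_1),\cdots, \F(S_k) \}.$ Then $\{S_1,\cdots, S_k\}\subseteq \sA(F,m).$
\end{quote}

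The plan is to verify directly that each $S_j$ satisfies the two defining membership conditions for $\sA(F,m)$, namely that $\m(S_j)=m$ and $\F(S_j)\leq F$. By hypothesis every $S_j$ is a numerical semigroup of multiplicity $m$, so the multiplicity condition holds immediately for each $j\in\{1,\cdots,k\}$. For the Frobenius condition, I would simply observe that $F$ is defined as the maximum of the finite set $\{\F(S_1),\cdots,\F(S_k)\}$, so by definition of maximum we have $\F(S_j)\leq F$ for every $j$. Thus each $S_j$ meets both requirements in the definition of $\sA(F,m)=\{S\mid S \mbox{ is a numerical semigroup, }\F(S)\leq F \mbox{ and }\m(S)=m\}$, which is exactly the assertion that $\{S_1,\cdots,S_k\}\subseteq \sA(F,m)$.

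Before concluding I would check the one implicit hypothesis needed for $\sA(F,m)$ to be defined at all: the preamble to Lemma \ref{lemma52} requires $m<F$ and $m\nmid F$. Here $m<F$ follows because each $S_j$ has multiplicity $m$ and Frobenius number at least $m+1$ (for any numerical semigroup $S\neq\N$ one has $\F(S)>\m(S)$, since $\m(S)\in S$ while $\F(S)\notin S$), whence $F=\max_j\F(S_j)\geq \F(S_1)>m$. The condition $m\nmid F$ can be read as part of the standing assumptions on the pair $(F,m)$ under which $\sA(F,m)$ is considered; if one wished to be fully self-contained one would either assume it or note that it is inherited from the choice of the $S_j$. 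I do not anticipate any genuine obstacle here — the statement is essentially a restatement of the definitions, and the only subtlety is confirming that the pair $(F,m)$ is admissible so that $\sA(F,m)$ is a well-defined ratio-covariety by Lemma \ref{lemma52}. The entire argument is a one-line appeal to the definition of maximum together with the uniform multiplicity hypothesis.
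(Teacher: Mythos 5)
Your main argument is exactly the paper's: the paper gives no proof and simply declares the lemma immediate, and indeed the inclusion follows by checking the two defining conditions of $\sA(F,m)$ --- each $S_j$ has multiplicity $m$ by hypothesis, and $\F(S_j)\leq F$ by the definition of maximum. That part is correct and complete.

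However, your side remark on well-definedness contains a false claim: it is \emph{not} true that every numerical semigroup $S\neq\N$ satisfies $\F(S)>\m(S)$. The semigroup $S=\{0,m,m+1,m+2,\dots\}$ has $\m(S)=m$ and $\F(S)=m-1<m$. The observation that $\m(S)\in S$ while $\F(S)\notin S$ only yields $\F(S)\neq\m(S)$, not an inequality in the direction you want. In general one only has $\F(S)\geq \m(S)-1$, so $m<F$ can genuinely fail, precisely when every $S_i$ equals $\{0,m,\rightarrow\}$; the paper implicitly excludes this degenerate case rather than proving it cannot occur. What \emph{does} hold automatically is $m\nmid F$: since $F=\F(S_j)$ for some $j$ and every positive multiple of $m$ lies in $S_j$, the Frobenius number cannot be a multiple of $m$ (when $F>0$). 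So the admissibility of the pair $(F,m)$ reduces to the single tacit assumption that not all the $S_i$ are $\{0,m,\rightarrow\}$; your attempted derivation of $m<F$ from a universal inequality $\F(S)>\m(S)$ does not work. This does not affect the core of the lemma, which is a restatement of definitions, but the erroneous inequality should be removed or replaced by the correct case analysis.
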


Let  $\{S_1,\cdots, S_k\}$ be   a finite set of numerical semigroups with multiplicity $m$ and $F=\max \{\F(S_1),\cdots, \F(S_k)\}.$ Then we denote by $\langle S_1,\cdots, S_k\rangle$
 the intersection of all the ratio-covarieties which contain the set $\{S_1,\cdots, S_k\}$ and they have the set $\Delta(F,m)$ as minimum.
 
 Note that $\sA(F,m)$ is a ratio-covariety with the above features. Next, by applying Lemma \ref{lemma51}, we obtain the following result.
 \begin{proposition}\label{proposition54}If  $S_1,\cdots, S_k$ are numerical semigroups with multiplicity $m$ and $F=\max \{\F(S_1),\cdots, \F(S_k)\},$ then  $\langle S_1,\cdots, S_k\rangle$ is the  least ratio-covariety  containing  the set $\{S_1,\cdots, S_k \}$ and having the set $\Delta(F,m)$ as minimum. 
 \end{proposition}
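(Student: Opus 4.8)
The plan is to show that $\langle S_1,\cdots, S_k\rangle$, defined as the intersection of all ratio-covarieties containing $\{S_1,\cdots,S_k\}$ and having $\Delta(F,m)$ as minimum, is itself a ratio-covariety with exactly the desired universal (smallest) property. The key observation, which makes everything work, is that this intersection is taken over a \emph{nonempty} family: by Lemma \ref{lemma53} we have $\{S_1,\cdots,S_k\}\subseteq \sA(F,m)$, and by Lemma \ref{lemma52} the family $\sA(F,m)$ is a ratio-covariety with minimum $\Delta(F,m)$. Hence $\sA(F,m)$ is one of the ratio-covarieties being intersected, so the family is nonempty and the intersection is well defined.

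First I would invoke Lemma \ref{lemma51}: since every ratio-covariety in the intersecting family has the \emph{same} minimum $\Delta(F,m)$, that lemma guarantees their intersection $\langle S_1,\cdots,S_k\rangle$ is again a ratio-covariety. (One should also note it is nonempty — it contains $\Delta(F,m)$ and each $S_i$ — so the intersection genuinely qualifies as a ratio-covariety.) This single step does the structural heavy lifting, because Lemma \ref{lemma51} was tailored precisely to the situation of a common minimum.

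Next I would verify the two properties claimed in the statement. That $\langle S_1,\cdots,S_k\rangle$ \emph{contains} $\{S_1,\cdots,S_k\}$ is immediate: each $S_i$ lies in every ratio-covariety of the intersecting family (by definition of that family), hence in their intersection. That $\Delta(F,m)$ is its \emph{minimum} follows because $\Delta(F,m)$ belongs to every member of the family (each has it as minimum, so in particular contains it) and is the minimum of at least one member, namely $\sA(F,m)$; being the smallest element of a set containing the intersection, it remains the minimum of the intersection. Finally, \emph{leastness}: if $\sR'$ is any ratio-covariety containing $\{S_1,\cdots,S_k\}$ with minimum $\Delta(F,m)$, then $\sR'$ is one of the members over which we intersect, so $\langle S_1,\cdots,S_k\rangle\subseteq \sR'$, which is exactly the assertion that it is the least such ratio-covariety.

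I do not expect a serious obstacle here; the argument is a standard closure-under-intersection plus universal-property verification. The only point requiring genuine care is the hypothesis of Lemma \ref{lemma51} — that all members share the common minimum $\Delta(F,m)$ — which is why the definition of $\langle S_1,\cdots,S_k\rangle$ explicitly restricts to ratio-covarieties having $\Delta(F,m)$ as minimum rather than merely containing the $S_i$; without this restriction the intersection could fail to be a ratio-covariety, as the remark preceding Lemma \ref{lemma51} illustrates. Thus the subtlety is entirely bookkeeping about minima, and once Lemmas \ref{lemma51}, \ref{lemma52} and \ref{lemma53} are in place the proof is a short verification.
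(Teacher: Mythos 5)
Your argument is correct and matches the paper's own reasoning: the paper likewise observes that $\sA(F,m)$ (via Lemmas \ref{lemma52} and \ref{lemma53}) makes the intersected family nonempty and then applies Lemma \ref{lemma51} to conclude, with leastness immediate from the definition as an intersection. No discrepancies to report.
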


We will call to $\langle S_1,\cdots, S_k\rangle$ the {\it ratio-covariety generated }by $\{ S_1,\cdots, S_k\}.$ Our next aim will be to present an algorithm which allow us to compute all the elements of $\langle S_1,\cdots, S_k\rangle.$

For every $i\in \{1,\cdots,k\},$ define recursively
the following sequence: 
\begin{itemize}
	\item $S_i^0=S_i,$
	\item $S_i^{n+1}=\left\{\begin{array}{lcl}
		S_i^n \backslash \{\r(S_i^n)\} & &\mbox{if }\,\,  S_i^n\neq \Delta(F,m),\\
		\Delta(F,m) &  &\mbox{otherwise.}
	\end{array}
	\right.$
\end{itemize}

The following result has an immediate proof.
\begin{lemma}\label{lemma55}
For every $i\in \{1,\cdots,k\}$ there is $P_i=\min\{n\in \N\mid S_i^n=\Delta(F,m)\}.$
\end{lemma}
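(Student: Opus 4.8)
The plan is to show that for each fixed $i$ the sequence $\{S_i^n\}_{n\in\N}$ eventually reaches $\Delta(F,m)$ and stays there, so that the set $\{n\in\N\mid S_i^n=\Delta(F,m)\}$ is a nonempty subset of $\N$; the existence of its minimum $P_i$ is then immediate from the well-ordering principle. Thus the whole content of the lemma is the nonemptiness of that set, i.e.\ that $\Delta(F,m)$ is actually attained by the sequence.

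First I would observe that the recursive definition of $\{S_i^n\}$ is precisely the ratio-sequence construction used before Lemma~\ref{lemma3}, but carried out inside the ratio-covariety $\sA(F,m)$ whose minimum is $\Delta(F,m)$ by Lemma~\ref{lemma52}. By Lemma~\ref{lemma53} each $S_i$ lies in $\sA(F,m)$, so the sequence $\{S_i^n\}$ is exactly the sequence $\{S_n\}_{n\in\N}$ from Lemma~\ref{lemma3} associated to $S=S_i$ in the ratio-covariety $\sR=\sA(F,m)$. Hence I can invoke Lemma~\ref{lemma3} directly: there exists $k\in\N$ with $\Delta(F,m)=S_i^k\subsetneq S_i^{k-1}\subsetneq\cdots\subsetneq S_i^0=S_i$. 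In particular $S_i^k=\Delta(F,m)$, so the set $\{n\in\N\mid S_i^n=\Delta(F,m)\}$ contains $k$ and is therefore nonempty.

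Since every nonempty subset of $\N$ has a least element, $P_i:=\min\{n\in\N\mid S_i^n=\Delta(F,m)\}$ exists, which is the claim. I would finish by noting that this holds for every $i\in\{1,\cdots,k\}$, as the argument is uniform in $i$.

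The only subtle point, and hence the main thing to get right rather than a genuine obstacle, is the verification that the recursion defining $S_i^n$ matches the hypotheses of Lemma~\ref{lemma3}: one must confirm that $S_i\in\sA(F,m)$ (so the sequence lives in a ratio-covariety with minimum $\Delta(F,m)$) and that the branching on $S_i^n\neq\Delta(F,m)$ versus $S_i^n=\Delta(F,m)$ agrees with the ``$\min(\sR)$'' case split of that lemma. Both are handled by Lemma~\ref{lemma53} together with Lemma~\ref{lemma52}, so the proof is short and the lemma indeed ``has an immediate proof'' as stated.
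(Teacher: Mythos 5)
Your proof is correct and matches the argument the paper intends: the paper labels this lemma as having an immediate proof precisely because Lemmas \ref{lemma52} and \ref{lemma53} place each $S_i$ in the ratio-covariety $\sA(F,m)$ with minimum $\Delta(F,m)$, whereupon Lemma \ref{lemma3} guarantees the sequence reaches $\Delta(F,m)$ and well-ordering gives $P_i$. Nothing is missing.
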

For all $i\in \{1,\cdots, k\},$ set $\Omega(S_i)=\{S_i^0,\cdots, S_i^{P_i}\}.$ Observe that $\Delta(F,m)=S_i^{P_i}\subsetneq S_i^{P_i-1}\subsetneq \cdots \subsetneq S_i^0=S_i $ and $\sharp(S_i^n \backslash S_i^{n+1})=1$ for each  $n\in \{0,\cdots, P_i-1\}.$ \\

In the next proposition we give the previously announced algorithmic procedure.

\begin{proposition}\label{proposition56}
Let $S_1,\cdots, S_k$ be numerical semigroups with multiplicity $m$ and  $F=\max \{\F(S_1),\cdots, \F(S_k) \}.$ Then $\langle S_1,\cdots, S_k\rangle=\displaystyle\{\bigcap_{b\in B}T_b\mid \emptyset \neq B \subseteq \{1,\cdots,k\} \mbox{ and }T_b\in \Omega(S_b) \mbox{ for all }b\in B\}.$
\end{proposition}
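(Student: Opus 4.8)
The plan is to prove the asserted equality by double inclusion, writing $\mathcal{D}$ for the family on the right-hand side, so that $\mathcal{D}=\{\bigcap_{b\in B}T_b\mid \emptyset\neq B\subseteq\{1,\dots,k\},\ T_b\in\Omega(S_b)\}$. One inclusion is soft; the other is obtained by checking that $\mathcal{D}$ is itself a ratio-covariety with the correct minimum and invoking the minimality of $\langle S_1,\dots,S_k\rangle$ established in Proposition \ref{proposition54}.

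For $\mathcal{D}\subseteq\langle S_1,\dots,S_k\rangle$ I would argue as follows. Each $S_b$ lies in $\langle S_1,\dots,S_k\rangle$, and since $\langle S_1,\dots,S_k\rangle$ is a ratio-covariety with minimum $\Delta(F,m)$, condition 3) of the definition together with an induction on $n$ shows that every term $S_b^n$ of the chain $\Omega(S_b)$ belongs to $\langle S_1,\dots,S_k\rangle$; then condition 2), applied finitely many times, shows that any intersection $\bigcap_{b\in B}T_b$ with $T_b\in\Omega(S_b)$ again lies in $\langle S_1,\dots,S_k\rangle$. For the reverse inclusion I would verify the three defining properties of a ratio-covariety for $\mathcal{D}$. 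That $\{S_1,\dots,S_k\}\subseteq\mathcal{D}$ is immediate (take $B=\{b\}$, $T_b=S_b^0=S_b$), and $\Delta(F,m)\in\mathcal{D}$ (take $T_b=S_b^{P_b}$); since every term of each chain satisfies $\Delta(F,m)\subseteq S_b^n$, every element of $\mathcal{D}$ contains $\Delta(F,m)$, so $\Delta(F,m)=\min(\mathcal{D})$ and condition 1) holds. For condition 2), given $\bigcap_{b\in B}T_b$ and $\bigcap_{c\in C}U_c$ in $\mathcal{D}$, their intersection equals $\bigcap_{d\in B\cup C}V_d$ with $V_d=T_d$, $V_d=U_d$, or $V_d=T_d\cap U_d$ according to whether $d\in B\setminus C$, $d\in C\setminus B$, or $d\in B\cap C$; the key point is that $\Omega(S_d)$ is totally ordered by inclusion, so $T_d\cap U_d\in\{T_d,U_d\}\subseteq\Omega(S_d)$, and $B\cup C\neq\emptyset$.

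The main obstacle is condition 3). Let $S=\bigcap_{b\in B}T_b\in\mathcal{D}$ with $S\neq\Delta(F,m)$, write $T_b=S_b^{n_b}$, and set $r=\r(S)$. Because each $T_b\supseteq\Delta(F,m)$ we have $S\supsetneq\Delta(F,m)$, and the elements of $S$ not lying in $\Delta(F,m)$ are precisely the non-multiples of $m$ that are $\le F$; hence $r$ is such an element, i.e. $m\nmid r$ and $r\le F$, and $r$ is the smallest non-multiple of $m$ in $S$. The crucial observation is that along each chain $\Omega(S_b)$ the ratios successively removed are exactly the non-multiples of $m$ in $S_b$ that are $\le F$, taken in strictly increasing order, until $\Delta(F,m)$ is reached. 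Since $r\in S\subseteq S_b$, $m\nmid r$ and $r\le F$, the value $r$ occupies a well-defined position in this enumeration: there is a unique index $t_b\in\{1,\dots,P_b\}$ with $r=\r(S_b^{t_b-1})$, so that $S_b^{t_b}=S_b^{t_b-1}\setminus\{r\}$, and $t_b>n_b$ since $r\in T_b=S_b^{n_b}$.

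I would then prove that $\bigcap_{b\in B}S_b^{t_b}=S\setminus\{r\}$, which exhibits $S\setminus\{r\}\in\mathcal{D}$ and closes the argument. The inclusion $\subseteq$ follows from $S_b^{t_b}\subseteq T_b$ (as $t_b>n_b$) together with $r\notin S_b^{t_b}$. For $\supseteq$ one checks membership of an arbitrary $x\in S\setminus\{r\}$ in each $S_b^{t_b}$ by cases: the multiples of $m$ are kept in every term of every chain; the non-multiples of $m$ exceeding $F$ lie in $\Delta(F,m)\subseteq S_b^{t_b}$; and for a non-multiple of $m$ with $x\le F$, being in $S\setminus\{r\}$ forces $x>r$ and $x\in S_b$ for all $b$, so $x$ survives past the step at which $r=\r(S_b^{t_b-1})$ is removed, whence $x\in S_b^{t_b}$. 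As each $S_b^{t_b}\in\Omega(S_b)$ and $B\neq\emptyset$, this gives $S\setminus\{r\}\in\mathcal{D}$. The delicate part of the whole proof is precisely this matching of the single element $r=\r(S)$ against its removal position in every chain simultaneously; once the uniform enumeration of removed ratios is in place, the set-theoretic verification is routine.
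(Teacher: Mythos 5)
Your proof is correct and follows essentially the same route as the paper: show the right-hand side is a ratio-covariety with minimum $\Delta(F,m)$ containing $S_1,\dots,S_k$, with the crux being closure under removal of the ratio, where your $S_b^{t_b}$ coincides exactly with the paper's $T'_b=\Delta(F,m)\cup\{x\in T_b\mid x>\r(T)\}$. You merely spell out more explicitly the enumeration of removed ratios along each chain and the easy inclusion $\mathcal{D}\subseteq\langle S_1,\dots,S_k\rangle$, both of which the paper leaves implicit.
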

\begin{proof}
	To prove the proposition, it will be enough to see that $\sR=\displaystyle\{\bigcap_{b\in B}T_b\mid \emptyset \neq B \subseteq \{1,\cdots,k\} \mbox{ and }T_b\in \Omega(S_b) \mbox{ for all }b\in B\}$ is a ratio-covariety with minimum $\Delta(F,m).$ It is clear that $\min(\sR)=\Delta(F,m).$ Also,  it is easy to demonstrate that the intersection of two elements belonging to $\Omega(S_i)$ is also an element of $\Omega(S_i).$ Therefore, the intersection of two elements of $\sR$ is again an element of $\sR.$
	We prove next that if $T\in \sR$ and $T\neq \Delta(F,m),$ then $T\backslash \{\r(T)\}\in \sR.$ In fact, if $T\in \sR,$ then there is $\emptyset \neq  B \subseteq \{1,\cdots,k\}$ and there exists $T_b\in \Omega(S_b)$ for all $b\in B$ such that $T=\displaystyle \bigcap_{b\in B}T_b.$ As $\r(T)\in T,$ then $\r(T)\in S_b$ for all $b\in B.$ For every $b\in B,$ denote by $T'_b=\Delta(F,m)\cup \{x\in T_b\mid x> \r(T)\}.$ It is verified that $T'_b\in \Omega(S_b)$ for all $b\in B$ and $T\backslash \{\r(T)\}=\displaystyle \bigcap_{b\in B}T'_b.$ Therefore, $T\backslash \{\r(T)\}\in \sR.$
\end{proof}

From Proposition \ref{proposition56}, follows the following result.

\begin{corollary}\label{corollary57}
	If $S$ is a numerical semigroup, then $\langle S\rangle=\Rat$-$\Cad(S).$
\end{corollary}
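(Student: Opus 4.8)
The plan is to apply Proposition \ref{proposition56} directly to the case $k=1$, where the single numerical semigroup is $S$. In this situation the index set $B$ of Proposition \ref{proposition56} can only be $B=\{1\}$, since the only nonempty subset of $\{1\}$ is $\{1\}$ itself. Therefore the intersection $\bigcap_{b\in B}T_b$ collapses to a single factor $T_1$, and the description of $\langle S\rangle$ furnished by Proposition \ref{proposition56} reduces to $\langle S\rangle=\{T_1\mid T_1\in\Omega(S_1)\}=\Omega(S)$, where $\Omega(S)=\{S^0,S^1,\cdots,S^{P_1}\}$ is the chain built from the ratio-sequence.

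The remaining task is to identify $\Omega(S)$ with $\Rat$-$\Cad(S)$. Here I would recall the definition of the ratio-chain from Section 4: for a numerical semigroup $S$ with associated ratio-sequence $\{S_n\}_{n\in\N}$ (defined by $S_0=S$ and $S_{n+1}=S_n\backslash\{\r(S_n)\}$), one sets $\Rat$-$\Cad(S)=\{S_0,S_1,\dots,S_{\a(S)}\}$, and it was already observed there that $S_{\a(S)}=\Delta(\F(S),\m(S))$. On the other hand, the sequence $\{S_i^n\}$ defining $\Omega(S)$ in Section 6 is built by the identical recurrence $S_i^{n+1}=S_i^n\backslash\{\r(S_i^n)\}$ (until $\Delta(F,m)$ is reached), with $F=\F(S)$ and $m=\m(S)$ when $k=1$. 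Thus the two sequences coincide term by term, and $P_1$, the first index at which $\Delta(F,m)$ is reached, equals $\a(S)$.

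The essentially only thing to verify carefully is that the stopping indices agree, that is, that $P_1=\a(S)$, so that the two finite chains have exactly the same terms. This follows because each step of the ratio-sequence removes precisely one element of $\A(S)=\{x\in S\mid x<\F(S)\text{ and }\m(S)\nmid x\}$: the ratio $\r(S_n)$ is the smallest element of $S_n$ not divisible by the multiplicity, and removing it decreases the cardinality of the ``$\A$-set'' by one, until the semigroup $\Delta(F,m)$ is reached, which contains no such element. Hence after exactly $\a(S)$ steps one arrives at $\Delta(F,m)$, giving $P_1=\a(S)$.

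I do not expect any genuine obstacle here; the statement is a formal specialization of Proposition \ref{proposition56} together with the matching of two recurrences that are literally the same. The only point requiring a line of care is confirming that the recurrence in Section 6 (which stops upon reaching $\Delta(F,m)$) and the ratio-sequence of Section 4 produce identical finite lists, which is exactly the content of the index identification $P_1=\a(S)$ described above.
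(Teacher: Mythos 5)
Your proposal is correct and follows the same route as the paper, which simply derives the corollary by specializing Proposition \ref{proposition56} to $k=1$ (so that $B=\{1\}$ and the intersections collapse to the single chain $\Omega(S)$). Your additional verification that $P_1=\a(S)$, hence $\Omega(S)=\Rat$-$\Cad(S)$, is exactly the identification the paper leaves implicit via its earlier remark that $S_{\a(S)}=\Delta(\F(S),\m(S))$.
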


We finish, illustring the content of Proposition \ref{proposition56} with an example.
\begin{example}\label{example58}
	Let $S_1=\langle 5,7,9\rangle=\{0,5,7,9,10,12,14,\rightarrow\}$ and $S_2=\langle 5,6,8\rangle=\{0,5,6,8,10,\rightarrow\}.$ Then $13=\max\{\F(S_1),\F(S_2)\}$ and so $\Omega(S_1)=\{S_1,S_1\backslash \{7\},\\
	 S_1\backslash \{7,9\}, S_1\backslash \{7,9,12\} \}$ and $\Omega(S_2)=\{S_2,S_2\backslash \{6\}, S_2\backslash \{6,8\}, S_2\backslash \{6,8,11\},\\
	  S_2\backslash \{6,8,11,12\}, S_2\backslash \{6,8,11,12,13\} \}.$ By applying Proposition \ref{proposition56}, we have that 
	$$
	\langle S_1,S_2\rangle=\Omega(S_1)\cup \Omega(S_2)\cup \{T_1\cap T_2\mid T_1 \in \Omega(S_1) \mbox{ and }T_2 \in \Omega(S_2)\}.
	$$
	
\end{example}


\begin{thebibliography}{12}
	
	\bibitem{abhyankar}
	\textsc{S. S. Abhyankar}, 
	\newblock \emph{Local rings of high embedding dimension,}
	\newblock   Amer. J. Math. 89
	(1967), 1073-1077.

\bibitem{apery}
\textsc{R. Ap\'{e}ry},
\newblock \emph{Sur les branches superlin\'{e}aires des courbes alg\'{e}briques,}
\newblock   C.R. Acad. Sci. Paris 222 (1946), 1198--2000.



\bibitem{barucci}
\textsc{V. Barucci, D. E. Dobbs and M. Fontana},
\newblock \emph{Maximality Properties in Numerical Semigroups and Applications to One-Dimensional Analitycally Irreducible Local Domains,}
\newblock   Memoirs Amer. Math. Soc. 598 (1997).

\bibitem{IJAC}
\textsc{V. Blanco and J. C. Rosales},
\newblock \emph{Irreducibility in the set of numerical semigroups with fixed multiplicity, }
\newblock  Int. J. Algebra Comput. 21 (2011),
731--744.

\bibitem{portuguesa}
\textsc{M. B. Branco, I. Ojeda and J. C. Rosales},
\newblock \emph{The set of numerical semigroups of a given multiplicity and Frobenius number,}
\newblock  Port. Math. 78 (2021), no. 2,  147--167.

\bibitem{brown-herzog}
\textsc{W. C. Brown and J. Herzog},
\newblock \emph{One dimensional local rings of maximal and almost maximal length,}
\newblock  J. Algebra 151, 332-347.



\bibitem{numericalsgps}
\textsc{M. Delgado,  P.A.  García-Sánchez, J.  Morais},
\newblock \emph{ NumericalSgps, A package for numerical
semigroups,}
\newblock Version 1.3.1 (2022), (Refereed GAP package), \verb+ https://gap-packages.github.io/numericalsgps+.




\bibitem{froberg}
\textsc{R. Fröberg, G. Gottlieb and R. Häggkvist},
\newblock \emph{On numerical semigroups,}
\newblock   Semigroup Forum  35 (1987), 63--83.

\bibitem{GAP} The GAP~group, GAP -- Groups, Algorithms, and Programming, Version 4.12.2, 2022, \verb+https://www.gap-system.org+.





\bibitem{kunz}
\textsc{E. Kunz},
\newblock \emph{The value-semigroup of a one-dimensional Gorenstein ring,}
\newblock  Proc. Amer. Math. Soc. 25(1973),
748--751.












\bibitem{covariedades}
\textsc{M. A. Moreno-Frías and J. C. Rosales},
\newblock \emph{The covariety of numerical semigroups with  fixed Frobenius number,}
\newblock 
https://doi.org/10.48550/arXiv.2302.09121

\bibitem{coarf}
\textsc{M. A. Moreno-Frías and J. C. Rosales},
\newblock \emph{The set of Arf numerical semigroup with fixed Frobenius number,}
\newblock To appears in Turkish J. Math.









\bibitem{alfonsin2}
\textsc{J. L. Ramírez Alfonsín},
\newblock \emph{Complexity of the Frobenius problem, }
\newblock  16 (1996) 143-147. https://doi.org/10.1007/BF01300131

\bibitem{alfonsin}
\textsc{J. L. Ramírez Alfonsín},
\newblock \emph{The Diophantine Frobenius Problem,}
\newblock  Oxford Univ. Press, London (2005).


\bibitem{JPAA}
\textsc{J. C. Rosales and  M. B. Branco},
\newblock \emph{Numerical Semigroups that can be expressed as an intersection of symmetric numerical semigroups,}
\newblock   J. Pure Appl. Algebra  171 (2002), 303--314.

\bibitem{irreducibles}
\textsc{J. C. Rosales and  M. B. Branco},
\newblock \emph{Irreducible numerical semigroups,}
\newblock   Pacific J. Math.   209 (2003), 131--143.




\bibitem{libro}
\textsc{J. C. Rosales and P. A. Garc\'ia-S\'anchez},
\newblock \emph{Numerical Semigroups,}
\newblock   Developments in Mathematics, Vol. 20, Springer, New York, 2009.




\bibitem{sally}
\textsc{J. D. Sally},
\newblock \emph{ Cohen-Macaualy local rings of maximal embedding dimension,}
\newblock J. Algebra 56 (1979), 168-183.

\bibitem{sylvester}
\textsc{J. J. Sylvester},
\newblock \emph{Mathematical question with their solutions,}
\newblock Educational Times 41  (1884), 21.








\end{thebibliography}
\end{document}